	\newcommand{\one}{\mathds{1}}
\numberwithin{equation}{section}
\newcommand{\eq}[1]{\begin{align*} #1 \end{align*}}
\newcommand{\eeq}[1]{\begin{align} \begin{split} #1 \end{split} \end{align}}
\newcommand{\N}{\mathbb{N}}
\newcommand{\R}{\mathbb{R}}
\newcommand{\Z}{\mathbb{Z}}
\newcommand{\EE}{\mathbf{E}}
\newcommand{\PP}{\mathbf{P}}
\renewcommand{\a}{\mathcal{A}}
\newcommand{\f}{\mathcal{F}}
\newcommand{\g}{\mathcal{G}}
\renewcommand{\k}{\mathcal{K}}
\renewcommand{\l}{\mathcal{L}}
\newcommand{\m}{\mathcal{M}}
\newcommand{\n}{\mathcal{N}}
\newcommand{\p}{\mathcal{P}}
\renewcommand{\r}{\mathcal{R}}
\newcommand{\s}{\mathcal{S}}
\renewcommand{\t}{\mathcal{T}}
\renewcommand{\u}{\mathcal{U}}
\newcommand{\w}{\mathcal{W}}
\newtheorem{thm}{Theorem}[section]
\newtheorem{prop}[thm]{Proposition}
\newtheorem{lemma}[thm]{Lemma}
\newtheorem{theirthm}{Theorem}[section]
\theoremstyle{definition}
\newtheorem{remark}[thm]{Remark}
\def\eps{\varepsilon}
\def\vphi{\varphi}
\newcommand{\vc}[1]{{\boldsymbol #1}}
\newcommand{\wt}[1]{\widetilde{#1}}
\newcommand{\givenk}[3][]{#1[ #2 \: #1| \: #3 #1]} 
\newcommand{\givenp}[3][]{#1( #2 \: #1| \: #3 #1)} 
\newcommand{\Givenp}[3][]{#1( #2 \: ; \: #3 #1)} 
\newcommand{\dd}{\mathrm{d}}
\DeclareMathOperator{\diam}{diam}
\renewcommand{\thefootnote}{\fnsymbol{footnote}}
\title{Localization of directed polymers with general reference walk}
\subjclass[2010]{ 
60K37, 
82B26, 
82B44, 
82D60} 
\keywords{Directed polymer, long-range polymer, localization, free energy, phase transition}
\author{Erik Bates}
\thanks{This research was partially supported by NSF grant DGE-114747} 
\address{\newline Department of Mathematics \newline Stanford University \newline 450 Serra Mall, Bldg 380 \newline Stanford, CA 94305 \newline \textup{\tt ewbates@stanford.edu}}
\begin{document}
\bibliographystyle{acm}

\renewcommand{\thefootnote}{\arabic{footnote}} \setcounter{footnote}{0}

\begin{abstract}
Directed polymers in random environment have usually been constructed with a simple random walk on the integer lattice.
It has been observed before that several standard results for this model continue to hold for a more general reference walk.
Some finer results are known for the so-called long-range directed polymer in which the reference walk lies in the domain of attraction of an $\alpha$-stable process.
In this note, low-temperature localization properties recently proved for the classical case are shown to be true with any reference walk.
First, it is proved that the polymer's endpoint distribution is asymptotically purely atomic, thus strengthening the best known result for long-range directed polymers.
A second result proving geometric localization along a positive density subsequence is new to the general case.
The proofs use a generalization of the approach introduced by the author with S. Chatterjee in a recent manuscript on the quenched endpoint distribution; this generalization allows one to weaken assumptions on the both the walk and the environment.
The methods of this paper also give rise to a variational formula for free energy which is analogous to the one obtained in the simple random walk case.
\end{abstract}

\maketitle

\setcounter{footnote}{0} 
\section{Introduction}
The probabilistic model of directed polymers in random environment was introduced by Imbrie and Spencer \cite{imbrie-spencer88} as a reformulation of Huse and Henley's approach \cite{huse-henley85} to studying the phase boundary of the Ising model in the presence of random impurities.
In its classical form, the model considers a simple random walk (SRW) on the integer lattice $\Z^d$, whose paths---considered the ``polymer"---are reweighted according to a random environment that refreshes at each time step.
Large values in the environment tend to attract the random walker and possibly force localization phenomena; this attraction grows more effective in lower dimensions and at lower temperatures.
On the other hand, the random walk's natural dynamics favor diffusivity.
Which of these competing features dominates asymptotically is a central question in the study of directed polymers.  
Much progress has been made over the last thirty years in understanding polymer behavior;
for a comprehensive and up-to-date survey, the reader is referred to the recent book by Comets \cite{comets17}.

In \cite{comets07}, Comets initiated the study of long-range directed polymers.
In this model, the simple random walk is replaced by a general random walk capable of superdiffusive motion.
More specifically, it is assumed that the walk belongs to the domain of attraction of an $\alpha$-stable law for some $\alpha \in (0,2]$.
For example, any walk having increments with a finite second moment belongs to the $\alpha=2$ case.
Under this assumption the long-range polymer can model the behavior of 
heavy-tailed walks, such as L\'evy flights, when placed in an inhomogeneous random environment.
Indeed, L\'evy flights in random potentials have been used to study chemical reactions \cite{chen-deem01} and particle dispersions \cite{sokolov-mai-blumen97,brockmann-geisel03}.
Moreover, their continuous-time analogs, L\'evy processes, appear in a variety of disciplines including fluid mechanics, solid state physics, polymer chemistry, and mathematical finance \cite{nielsen-mikosch-resnick01}.
This is relevant because $\alpha$-stable polymers are known to obey a scaling CLT at sufficiently high temperatures (\cite[Theorem 4.2]{comets07} and \cite[Theorem 1.9]{wei16}), which generalizes the Brownian CLT proved in \cite[Theorem 1.2]{comets-yoshida06} when the reference walk is SRW.

Interestingly, universal behaviors have also appeared at low temperatures, where the system exists in  a ``disordered" phase.
Part of the work in \cite{comets07,wei16} was to extend localization results known for polymers constructed from a SRW to those constructed with $\alpha$-stable reference walks.
This paper continues the advance in this direction by proving that in the localization regime, certain qualitative behaviors of the polymer's endpoint distribution are the same for \textit{any} reference walk.
Namely, the strongest forms of localization known for arbitrary environment and arbitrary dimension, which were only recently proved for the SRW case in \cite{bates-chatterjee17}, are established here for the general case.

The organization of the remaining introduction is as follows.
After the polymer model is formally introduced, we will recall the relevant facts from the literature in Section \ref{overview_known_results} and state our main results in Section \ref{results_this_paper}.
The proof strategy is outlined in Section \ref{outline_methods_intro}, which describes how the approach used in \cite{bates-chatterjee17} must be expanded to work for general polymers.
Finally, Section \ref{related_aspects} offers references on other fronts of progress in both the short- and the long-range settings.

\subsection{The model}
Let $d$ be a positive integer, to be called the \textit{spatial} or \textit{transverse dimension}. 
Let $P$ denote the law of the \textit{reference walk}, which is a homogeneous random walk $(\omega_i)_{i\geq0}$ on $\Z^d$.
To be precise, we state the assumptions on $P$:
\eeq{
P(\omega_0 = 0) = 1, \qquad P\givenp{\omega_{i+1} = x}{\omega_i = y} = P(\omega_1 = x - y) \eqqcolon P(y,x) < 1. \label{walk_assumption}
}
Next we introduce a collection of i.i.d.~random variables $\vc \eta = (\eta(i,x) : i \geq 1, x \in \Z^d)$, called the \textit{random environment}, supported on some probability space $(\Omega,\f,\PP)$.
We will write $E$ and $\EE$ for expectation with respect to $P$ and $\PP$, respectively.
Finally, let $\beta > 0$ be a parameter representing \textit{inverse temperature}.
Then for $n\geq0$, the \textit{quenched polymer measure} of length $n $ is the Gibbs measure $\rho_n$ defined by
\eq{
\rho_n(\dd \omega) = \frac{1}{Z_n} e^{-\beta H_n(\omega)}\ P(\dd \omega), \quad \text{where} \quad H_n(\omega) \coloneqq -\sum_{i = 1}^n \eta(i,\omega_i).
}
The normalizing constant
\eq{
Z_n \coloneqq E(e^{-\beta H_n(\omega)}) = \sum_{x_1,\dots,\, x_n \in \Z^d} \exp\bigg(\beta \sum_{i = 1}^n \eta(i,x_i)\bigg)\prod_{i=1}^n P(x_{i-1},x_i), \quad x_0 \coloneqq 0,
}
is called the \textit{quenched partition function}.
A fundamental quantity of the system is calculated from this constant, namely the \textit{quenched free energy},
\eq{
F_n \coloneqq \frac{\log Z_n}{n}.
}
We specify ``quenched" to indicate that the randomness from the environment has not been averaged out.
That is, $\rho_n$, $Z_n$, and $F_n$ are each random processes with respect to the filtration
\eq{
\f_n \coloneqq \sigma(\eta(i,x) : 1 \leq i \leq n, x \in \Z^d), \quad n \geq 0.
}
When $Z_n$ is replaced by its expectation $\EE(Z_n)$, one obtains the \textit{annealed} free energy,
\eq{
\frac{\log \EE(Z_n)}{n} = \frac{\log (\EE\, e^{\beta \eta})^n}{n} = \log \EE(e^{\beta \eta}) \eqqcolon \lambda(\beta),
}
where $\eta$ denotes (here and henceforth) a generic copy of $\eta(1,0)$.
Notice that $\lambda(\cdot)$ depends only on the law of the environment, which we denote $\mathfrak{L}_\eta$.
We will assume finite ($1+\eps$)-exponential moments,
\eeq{ \label{mgf_assumption}
0 < \beta < \beta_{\max} \coloneqq \sup\{t \geq 0: \lambda(\pm t) < \infty\},
}
so that $Z_n$ has finite $\pm(1+\eps)$-moments at the given inverse temperature.
Otherwise $\mathfrak{L}_\eta$ is completely general, although to avoid trivialities, we will always assume that $\eta$ is not an almost sure constant, and that $\beta$ is strictly positive.

\subsection{Overview of known results} \label{overview_known_results}



Two fundamental facts are the convergence of the free energy and the existence of a corresponding phase transition.
The first result below was initially shown by Carmona and Hu \cite[Proposition 1.4]{carmona-hu02} for a Gaussian environment and by Comets, Shiga, and Yoshida \cite[Proposition 2.5]{comets-shiga-yoshida03} for a general environment, and later proved by Vargas \cite[Theorem 3.1]{vargas07} under weaker assumptions.

\begin{theirthm}[{\cite[Proposition 2.5]{comets-shiga-yoshida03}}]\label{convergence_background}
Let $P$ be SRW and assume $\lambda(t) < \infty$ for all $t \in \R$.
Then there exists a deterministic constant $p(\beta)$ such that
\eq{
\lim_{n \to \infty} F_n = p(\beta) \quad \mathrm{a.s.}
}
\end{theirthm}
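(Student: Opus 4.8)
The plan is to separate the statement into a deterministic limit for the averaged quantity $a_n := \EE[\log Z_n]$ and a concentration (self-averaging) estimate for the fluctuation $\log Z_n - a_n$. Note first that $a_n$ is a finite real number: $a_n \le \log\EE[Z_n] = n\lambda(\beta) < \infty$ by Jensen's inequality, and $a_n \ge -n\log(2d) + n\beta\,\EE[\eta] > -\infty$ by restricting the expectation defining $Z_n$ to a single deterministic nearest-neighbour path (here $\EE[\eta]$ is finite because $\lambda$ is finite near the origin).

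\textbf{Step 1 (superadditivity of $a_n$).} First I would show $a_{n+m}\ge a_n + a_m$. Conditioning the reference walk at time $n$ and using the Markov property,
\[
Z_{n+m} = \sum_{y\in\Z^d} q_n(y)\,\hat Z_m^{(y)}, \qquad q_n(y) := E\big[e^{\beta\sum_{i=1}^n\eta(i,\omega_i)}\ind{\omega_n=y}\big],
\]
where $\hat Z_m^{(y)}$ is the partition function of the walk started at $y$ relative to the environment $(\eta(n+i,\cdot))_{1\le i\le m}$. Thus $\sum_y q_n(y) = Z_n$ and $q_n$ is $\f_n$-measurable, while $\hat Z_m^{(y)}$ is independent of $\f_n$ and, by the spatial homogeneity of both the walk and the environment, has the same law as $Z_m$ for every $y$. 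Writing $\pi_n(y):=q_n(y)/Z_n = \rho_n(\omega_n=y)$ and applying Jensen's inequality to the concave function $\log$,
\[
\log Z_{n+m} = \log Z_n + \log\Big(\sum_y \pi_n(y)\,\hat Z_m^{(y)}\Big) \ge \log Z_n + \sum_y \pi_n(y)\log\hat Z_m^{(y)}.
\]
Taking $\EE$, then conditioning on $\f_n$ and using independence together with $\EE[\log\hat Z_m^{(y)}] = a_m$, yields $a_{n+m}\ge a_n+a_m$. By Fekete's lemma, $a_n/n \to p(\beta):=\sup_n a_n/n$, a deterministic constant in $\R$.

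\textbf{Step 2 (concentration).} It remains to prove $n^{-1}(\log Z_n - a_n)\to 0$ almost surely, which together with Step 1 gives $F_n\to p(\beta)$ a.s. I would decompose $\log Z_n - a_n = \sum_{k=1}^n \Delta_k$ along the filtration that reveals the environment one time-slab at a time, $\Delta_k := \EE[\log Z_n\mid\f_k] - \EE[\log Z_n\mid\f_{k-1}]$. Resampling the $k$-th slab and using the elementary bound $|\log Z_n - \log Z_n'| \le \beta\sup_{|x|_1\le n}|\eta(k,x)-\eta'(k,x)|$ (only sites with $|x|_1\le n$ are seen by a walk of length $n$) gives $|\Delta_k| \le \beta\big(\max_{|x|_1\le n}|\eta(k,x)| + C\log n\big)$. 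Under the hypothesis $\lambda(t)<\infty$ for every $t$, the maximum of the $O(n^d)$ relevant copies of $|\eta|$ sits at scale $\log n$ with exponentially light upper tails; truncating $\eta$ at level $(\log n)^2$ changes $Z_n$ only on an event of summable probability and biases $a_n$ by a summably small amount, and for the truncated model the increments $\Delta_k$ are bounded by $O((\log n)^2)$. The Azuma--Hoeffding inequality then gives $\PP\big(|\log Z_n - a_n| > n^{3/4}\big) \le 2\exp\big(-c\,n^{1/2}/(\log n)^4\big) + (\text{summable truncation term})$, and summing over $n$ and applying Borel--Cantelli finishes the argument.

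\textbf{Where the difficulty lies.} Step 1 is essentially formal once the decomposition is set up. The real work is the concentration estimate of Step 2: for a general, unbounded environment the martingale increments are not uniformly bounded but are controlled only by maxima of growing families of i.i.d.\ variables, so one has to interleave an environment truncation with the deviation inequality and verify that every resulting error is summable in $n$. This is exactly the point at which the full strength of the assumption $\lambda(t)<\infty$ for all $t$ enters; later refinements that assume only finiteness of $\lambda$ near $\pm\beta$ require a more delicate argument.
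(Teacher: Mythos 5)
Your proposal is correct, and its first half coincides with the paper's argument: the superadditivity in your Step 1 is exactly the decomposition $Z_{n+m}=\sum_y Z_n(y)\,(Z_m\circ\theta_{n,y})$ plus Jensen and Fekete used in Lemma \ref{means_converge} (including the same preliminary check that $\EE\log Z_n$ is finite). Where you diverge is the concentration step. The paper does not run a bespoke martingale argument; it invokes the exponential concentration inequality of Liu and Watbled (Lemma \ref{concentration}), which gives $\PP(|F_n-\EE F_n|>x)\le 2e^{-ncx^2}$ (resp.\ $2e^{-ncx}$ for $x>1$) with $c$ depending only on $K=2e^{\lambda(\beta)+\lambda(-\beta)}$ — so it needs only $\lambda(\pm\beta)<\infty$, which is what lets the paper state Proposition \ref{free_energy_converges} under the weaker hypothesis \eqref{weaker_mgf_assumption} and for an arbitrary reference walk. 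Your Step 2 instead interleaves a truncation of the environment at level $(\log n)^2$ with Azuma--Hoeffding for the slab-filtration Doob martingale; this is sound as written (the resampling bound $|\log Z_n-\log Z_n'|\le\beta\sup_x|\eta(k,x)-\eta'(k,x)|$, the summability of the truncation events over the $O(n^{d+1})$ relevant sites, the $o(n)$ bias of $\EE\log Z_n$, and the $\exp(-cn^{1/2}/(\log n)^4)$ tail are all correct), but as you yourself note it uses the full strength of $\lambda(t)<\infty$ for all $t$, which matches the hypothesis of the cited SRW statement yet would not cover the paper's generalization; also, your restriction to sites $\|x\|_1\le n$ is legitimate only because the walk here is nearest-neighbour. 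In short: same superadditivity, but your concentration step is a self-contained, more elementary (and more assumption-hungry) substitute for the off-the-shelf inequality the paper cites.
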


\begin{theirthm}[{\cite[Theorem 3.2(b)]{comets-yoshida06}}] \label{phase_transition_background}
Let $P$ be SRW and assume $\lambda(t) < \infty$ for all $t \in \R$.
Then there exists a critical value $\beta_c \in [0,\infty]$ such that
\eq{
0 \leq \beta \leq \beta_c \quad &\Rightarrow \quad p(\beta) = \lambda(\beta) \\
\beta > \beta_c \quad &\Rightarrow \quad p(\beta) < \lambda(\beta).
}
\end{theirthm}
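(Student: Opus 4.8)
\section*{Proof proposal (sketch for the preceding statement)}

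The idea is that $\mathcal I := \{\beta\ge 0 : p(\beta)=\lambda(\beta)\}$ is an interval of the form $[0,\beta_c]$ (possibly all of $[0,\infty)$), so that $\beta_c := \sup\mathcal I$ does the job. Two facts drive this: that $p(\beta)\le\lambda(\beta)$ for every $\beta$, and that the gap $h(\beta) := \lambda(\beta)-p(\beta)\ge 0$ is nondecreasing on $(0,\infty)$. For the first I would observe that $n\mapsto\EE\log Z_n$ is superadditive --- apply the Markov property of the walk through the decomposition $Z_{n+m} = \sum_x Z_n(x)\widetilde Z_m(x)$ into a length-$n$ piece ending at $x$ and an independent length-$m$ piece starting at $x$, then Jensen's inequality for $\log$, then the space--time homogeneity of $\vc\eta$ --- so Fekete's lemma gives $p(\beta)=\lim_n n^{-1}\EE\log Z_n = \sup_n n^{-1}\EE\log Z_n$, which coincides with the a.s.\ limit of Theorem~\ref{convergence_background} by concentration of $\log Z_n$. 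Since $\EE\log Z_n\le\log\EE Z_n = n\lambda(\beta)$ by Jensen's inequality, $p\le\lambda$; and $p(0)=\lambda(0)=0$.

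The crux is the monotonicity of $h$. With $h_n(\beta) := \lambda(\beta)-n^{-1}\EE\log Z_n$, the identity $p=\sup_n n^{-1}\EE\log Z_n$ gives $h=\inf_n h_n$; since an infimum of nondecreasing functions is nondecreasing, it suffices that each $h_n$ be nondecreasing. Differentiating in $\beta$ (valid since $\lambda$ is finite on all of $\R$, so $Z_n$ has locally integrable $\beta$-derivatives) yields $h_n'(\beta) = \lambda'(\beta)-n^{-1}\sum_{i=1}^n\EE\langle\eta(i,\omega_i)\rangle$, where $\langle\cdot\rangle$ is the $\rho_n$-average; hence it is enough to prove the per-layer bound $\EE\langle\eta(i,\omega_i)\rangle\le\lambda'(\beta)$ for $1\le i\le n$. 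This is the main obstacle. When $\vc\eta$ is Gaussian of variance $\sigma^2$ it is actually an identity: expanding $\langle\eta(i,\omega_i)\rangle=\sum_x\eta(i,x)\langle\one_{\omega_i=x}\rangle$ and integrating by parts in each coordinate $\eta(i,x)$ gives $\lambda'(\beta)-\EE\langle\eta(i,\omega_i)\rangle = \beta\sigma^2\,\EE\langle\one_{\omega_i=\omega_i'}\rangle\ge 0$, with $\omega'$ an independent polymer sample. For a general environment the identity fails, and one argues directly: conditioning on $\vc\eta$ off the $i$-th time slice rewrites the left side as $\EE\big[\sum_x\eta(i,x)\,\frac{A_xe^{\beta\eta(i,x)}}{\sum_y A_ye^{\beta\eta(i,y)}}\big]$ for nonnegative weights $(A_x)$ independent of $(\eta(i,x))_x$, and the point is that this quenched ``tilted average'' never exceeds the annealed one $\lambda'(\beta)=\EE[\eta e^{\beta\eta}]/\EE[e^{\beta\eta}]$ --- concentrating the weights on a single site pulls it down toward $\EE\eta\le\lambda'(\beta)$, whereas dispersing them pushes it up to, but not past, $\lambda'(\beta)$ by the law of large numbers. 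Making this rigorous for an arbitrary law $\mathfrak L_\eta$ --- say via a Schur-concavity estimate in $(A_x)$, or by interpolating $\vc\eta$ against an independent resampling of the $i$-th slice --- is where the real work lies.

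Granting the above, put $\beta_c := \sup\mathcal I\in[0,\infty]$ (a legitimate supremum since $0\in\mathcal I$). The function $h$ is nonnegative, nondecreasing, and continuous on $(0,\infty)$ (the difference of the smooth $\lambda$ and the finite convex $p$). If $0<\beta_c<\infty$, then taking a sequence in $\mathcal I$ increasing to $\beta_c$ and using continuity gives $h(\beta_c)=0$, so $h(0)=0$ together with monotonicity and $h\ge 0$ force $h\equiv 0$ on $[0,\beta_c]$ and $h(\beta)>0$ for $\beta>\beta_c$; the cases $\beta_c=0$ (so $h>0$ on all of $(0,\infty)$) and $\beta_c=\infty$ (so $h\equiv 0$) are immediate. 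Recalling $h=\lambda-p$ gives exactly the stated dichotomy.
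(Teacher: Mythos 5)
Your reduction is sound and matches the paper's: both routes come down to the derivative bound \eqref{derivative_ineq_to_show}, namely $\frac{\partial}{\partial\beta}\EE(F_n)\le\lambda'(\beta)$, equivalently $\sum_{i=1}^n\EE\langle\eta(i,\omega_i)\rangle\le n\lambda'(\beta)$ (your packaging via $h=\inf_n h_n$, versus the paper's integration of $\lambda'-p'$ using absolute continuity of the convex limit $p$, is a cosmetic difference). But at the crux you stop. For a general law $\mathfrak{L}_\eta$ you reformulate the needed bound as $\EE\bigl[\sum_x\eta_x A_xe^{\beta\eta_x}/\sum_y A_ye^{\beta\eta_y}\bigr]\le\lambda'(\beta)$ with weights $(A_x)$ independent of the $i$-th slice, and then state that making this rigorous ``is where the real work lies,'' offering only a heuristic (concentrating the weights pulls the average toward $\EE\eta$, spreading them pushes it up to $\lambda'(\beta)$) and two unexecuted suggestions (Schur-concavity in $(A_x)$, interpolation against a resampled slice). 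That is a genuine gap: the heuristic establishes no monotonicity in the weights, neither suggested route is carried out, and so the claimed monotonicity of $\lambda-p$ --- the whole content of the theorem --- is proved only in the Gaussian case. You also wave through the interchanges of derivative, $\EE$, and $E$, which the paper spends real effort justifying (the three steps of \eqref{would_like}), though under the hypothesis $\lambda(t)<\infty$ for all $t$ this is a lesser issue.

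The paper closes exactly this step with the Harris--FKG inequality under an exponentially tilted product measure: for a fixed path $\omega$, tilt $\PP$ by $\dd\wt\PP/\dd\PP=e^{-\beta H_n(\omega)-n\lambda(\beta)}$ (still a product measure), note that $-H_n(\omega)$ is nondecreasing and $Z_n^{-1}$ nonincreasing in the variables $\eta(i,x)$, so that $\wt\EE(-H_n/Z_n)\le\wt\EE(-H_n)\,\wt\EE(Z_n^{-1})=n\lambda'(\beta)\,\wt\EE(Z_n^{-1})$; undoing the tilt and integrating over $P$ with Tonelli gives $\partial_\beta\EE(\log Z_n)\le n\lambda'(\beta)$. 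Your per-slice formulation can in fact be completed by the same mechanism applied one variable at a time: tilt only $\eta_x$ by $e^{\beta\eta_x-\lambda(\beta)}$, observe that conditionally on $(\eta_y)_{y\ne x}$ the factor $A_x/\sum_y A_ye^{\beta\eta_y}$ is nonincreasing in $\eta_x$ while $\eta_x$ is nondecreasing, apply the one-variable association (Chebyshev/FKG) inequality together with $\wt\EE(\eta_x)=\lambda'(\beta)$, and sum over $x$, whereupon the terms recombine to $\lambda'(\beta)$. Some such argument is indispensable; without it the existence of $\beta_c$ is not established.
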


The high temperature region $(0,\beta_c]$ is often called the ``delocalized" phase, while the low temperature interval $(\beta_c,\infty)$ forms the ``localized" phase.
The names are justified by the next result. 

\begin{theirthm}[{\cite[Corollary 2.2]{comets-shiga-yoshida03}}] \label{localization_background}
Let $P$ be SRW and assume $\lambda(t) < \infty$ for all $t \in \R$.
Define the (random) set
\eq{
\a_i^\eps \coloneqq \{x \in \Z^d : \rho_i(\omega_i = x) > \eps\}.
}
Then $p(\beta) < \lambda(\beta)$ if and only if there exists $\eps > 0$ such that
\eeq{ \label{eps_atom_eps}
\liminf_{n\to\infty} \frac{1}{n}\sum_{i=0}^{n-1} \one_{\{\a_i^\eps \neq \varnothing\}} \geq \eps \quad \mathrm{a.s.}
}
\end{theirthm}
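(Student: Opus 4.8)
The plan is to exploit the multiplicative structure of the partition function. For $i\ge 0$, let $\wt\rho_i$ denote the law on $\Z^d$ of $\omega_{i+1}$ obtained by sampling $\omega_i$ from its $\rho_i$-marginal and then taking one further step of the reference walk, i.e.\ $\wt\rho_i(y):=\sum_x\rho_i(\omega_i=x)\,P(x,y)$; for SRW this is the average of $\rho_i(\omega_i=\cdot)$ over the $2d$ lattice neighbors of $y$. A one-line computation gives $Z_i/Z_{i-1}=\sum_y\wt\rho_{i-1}(y)\,e^{\beta\eta(i,y)}$, and since $Z_0=1$ this telescopes to
\eq{
\lambda(\beta)-F_n \;=\; \frac1n\sum_{i=1}^n V_i, \qquad V_i \;:=\; -\log\Big(\sum_y\wt\rho_{i-1}(y)\,e^{\beta\eta(i,y)-\lambda(\beta)}\Big),
}
in which each factor $e^{\beta\eta(i,y)-\lambda(\beta)}$ has mean one and is independent of $\f_{i-1}$. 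Writing $q_i:=\sum_y\wt\rho_i(y)^2$ for the (pushed-forward) replica overlap at time $i$, the analytic core of the argument is the two-sided estimate: there exist $0<c_1(\beta)\le c_2(\beta)<\infty$, depending only on $\beta$ and $\mathfrak L_\eta$, such that
\eq{
c_1(\beta)\,q_{i-1} \;\le\; \EE[V_i\mid\f_{i-1}] \;\le\; c_2(\beta)\,q_{i-1} \qquad\text{for all }i\ge 1.
}
The lower bound is a quantitative improvement of the Jensen inequality $\EE[V_i\mid\f_{i-1}]\ge 0$, a strong-convexity-type estimate for $-\log$ applied to a convex combination of i.i.d.\ mean-one variables; the upper bound is more delicate, since $V_i$ is large exactly on the unlikely event that $\sum_y\wt\rho_{i-1}(y)\,e^{\beta\eta(i,y)-\lambda(\beta)}$ is atypically small, and estimating the cost of that event is where the standing hypothesis $\lambda(t)<\infty$ for all $t\in\R$ enters.

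Granting the two-sided estimate, I would pass to the almost sure statement by a martingale argument. The process $M_n:=\sum_{i=1}^n\big(V_i-\EE[V_i\mid\f_{i-1}]\big)$ is an $\f_n$-martingale, and the exponential-moment hypothesis gives an at-worst-polylogarithmic bound on $\EE[V_i^2]$, for instance via the elementary sandwich $\lambda(\beta)-\beta\max_{|y|\le i}\eta(i,y)\le V_i\le\lambda(\beta)-\beta\sum_y\wt\rho_{i-1}(y)\eta(i,y)$, whence $\sum_i i^{-2}\EE[(M_i-M_{i-1})^2]<\infty$ and $M_n/n\to0$ a.s.\ by the $L^2$ martingale strong law. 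Together with Theorem~\ref{convergence_background}, this forces $\frac1n\sum_{i=1}^n\EE[V_i\mid\f_{i-1}]\to\lambda(\beta)-p(\beta)$ a.s., so by the two-sided estimate
\eq{
\frac{\lambda(\beta)-p(\beta)}{c_2(\beta)} \;\le\; \liminf_{n\to\infty}\frac1n\sum_{i=0}^{n-1}q_i \;\le\; \limsup_{n\to\infty}\frac1n\sum_{i=0}^{n-1}q_i \;\le\; \frac{\lambda(\beta)-p(\beta)}{c_1(\beta)} \qquad\text{a.s.}
}
In particular, $\liminf_{n}\frac1n\sum_{i=0}^{n-1}q_i$ is a.s.\ strictly positive if and only if $p(\beta)<\lambda(\beta)$.

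It remains to trade the overlap for the atom statement. On one side, $q_i\le\max_y\wt\rho_i(y)\le\max_x\rho_i(\omega_i=x)$, so $q_i\le\eps+\one_{\{\a_i^\eps\ne\varnothing\}}$ for every $\eps>0$; on the other side, for SRW an atom $\rho_i(\omega_i=x)>\eps$ forces $\wt\rho_i(y)>\eps/(2d)$ at each of the $2d$ neighbors $y$ of $x$, so $q_i\ge(\eps^2/2d)\,\one_{\{\a_i^\eps\ne\varnothing\}}$. For the forward direction, if $p(\beta)<\lambda(\beta)$ put $\eps:=(\lambda(\beta)-p(\beta))/(2c_2(\beta))$; then $\liminf_n\frac1n\sum_{i=0}^{n-1}\one_{\{\a_i^\eps\ne\varnothing\}}\ge\liminf_n\frac1n\sum_{i=0}^{n-1}q_i-\eps\ge2\eps-\eps=\eps$ a.s., which is \eqref{eps_atom_eps}. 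For the converse, if \eqref{eps_atom_eps} holds for some $\eps>0$ then $\liminf_n\frac1n\sum_{i=0}^{n-1}q_i\ge(\eps^2/2d)\liminf_n\frac1n\sum_{i=0}^{n-1}\one_{\{\a_i^\eps\ne\varnothing\}}\ge\eps^3/(2d)>0$ a.s., and feeding this into the displayed chain of inequalities yields $\lambda(\beta)-p(\beta)\ge c_1(\beta)\eps^3/(2d)>0$. The main obstacle will be the two-sided estimate on $\EE[V_i\mid\f_{i-1}]$, particularly the upper bound and the accompanying moment bound on $V_i$; the remaining ingredients --- the telescoping identity, Chebyshev's inequality for the small-denominator event, the martingale strong law, and the elementary overlap/atom comparisons --- are routine.
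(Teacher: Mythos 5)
The paper itself gives no proof of Theorem \ref{localization_background}: it is imported verbatim as background, with the proof living in the cited reference (Comets--Shiga--Yoshida, building on Carmona--Hu), and your proposal is essentially that classical argument --- the Doob-type decomposition of $n(\lambda(\beta)-F_n)$ into the increments $V_i$, the two-sided comparison of $\EE[V_i\mid\f_{i-1}]$ with the replica overlap, a martingale strong law to convert this into an almost-sure Ces\`aro statement, and the elementary comparison between overlap and $\eps$-atoms. The reduction steps you give all check out: the telescoping identity is the paper's \eqref{ratio_equality}; the sandwich $\lambda(\beta)-\beta\max_{\|y\|_1\le i}\eta(i,y)\le V_i\le\lambda(\beta)-\beta\sum_y\wt\rho_{i-1}(y)\eta(i,y)$ does give $\EE[V_i^2]=O((\log i)^2)$ under full exponential moments, which suffices for the $L^2$ martingale SLLN; the inequalities $q_i\le\max_x\rho_i(\omega_i=x)\le\eps+\one_{\{\a_i^\eps\ne\varnothing\}}$ and, for SRW, $q_i\ge(\eps^2/2d)\one_{\{\a_i^\eps\ne\varnothing\}}$ are correct; and your choice $\eps=(\lambda(\beta)-p(\beta))/(2c_2)$ makes the quantifiers in \eqref{eps_atom_eps} come out as stated.

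The one genuine gap is the piece you yourself flag: the two-sided estimate $c_1\,q_{i-1}\le\EE[V_i\mid\f_{i-1}]\le c_2\,q_{i-1}$ is asserted, not proved, and it is exactly this estimate that constitutes the substance of the cited corollary, so as written your argument is an accurate reduction to the known lemma rather than a self-contained proof. The estimate is true under the standing hypothesis $\lambda(t)<\infty$ for all $t$, and can be supplied as follows. Write $U_i=1+X$ with $X=\sum_y\wt\rho_{i-1}(y)(e^{\beta\eta(i,y)-\lambda(\beta)}-1)$ centered; for the upper bound use $-\log(1+x)\le -x+x^2/(1+x)$ for $x>-1$, then Cauchy--Schwarz together with a Rosenthal-type bound $\EE[X^4]\le C\,q_{i-1}^2$ (this uses $\lambda(4\beta)<\infty$) and $\EE[U_i^{-2}]\le e^{2\lambda(\beta)+\lambda(-2\beta)}$ (exactly the mechanism of Lemma \ref{amgm}); for the lower bound use $x-\log(1+x)\ge c\,(x^2\wedge 1)$ together with a truncation and the non-degeneracy of $\eta$ to get $\EE[X^2\wedge 1]\ge c'\,q_{i-1}$. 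With that lemma in place your proof is complete and coincides with the route taken in the source.
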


In \cite{vargas07}, $\a_i^\eps$ is called the set of $\eps$-atoms.
In other words, a polymer is localized when it contains macroscopic atoms that persist with positive density.
This result was generalized to the case of $0 < \beta < B\coloneqq \sup\{t \geq 0 : \lambda(t) < \infty\}$ by Vargas \cite[Theorem 3.6]{vargas07}, whose argument was extended to general $P$ by Wei \cite[Theorem 1.17]{wei16}.
Vargas also proved a stronger localization \eqref{eps_atom_delta} for parameters sufficiently close to $B$.

\begin{theirthm}[{\cite[Theorem 3.7]{vargas07}}] \label{partial_apa_background}
Let $P$ be SRW.
Assume $B > 0$ and $\lim_{t \to B} \lambda(t)/t = \infty$.
Then for every $\delta < 1$, there exists $\eps > 0$ and $\beta_0 \in (0,B)$ such that
\eeq{ \label{eps_atom_delta}
\beta \in (\beta_0,B) \quad \Rightarrow \quad \liminf_{n\to\infty}\frac{1}{n}\sum_{i = 0}^{n-1} \rho_i(\omega_i \in \a_i^\eps)  \geq \delta \quad \mathrm{a.s.}
}
\end{theirthm}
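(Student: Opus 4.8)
The plan is to read off the localization in \eqref{eps_atom_delta} from the free energy: under the two hypotheses the gap $\lambda(\beta)-p(\beta)$ will be shown to exhaust almost all of its largest possible value as $\beta\uparrow B$, and this, through the martingale decomposition of $\log Z_n$, forces the endpoint distribution to sit on average almost entirely on its own $\eps$-atoms. Throughout I write $\mu_i(x):=\rho_{i-1}(\omega_i=x)=\sum_y\rho_{i-1}(\omega_{i-1}=y)P(y,x)$ and $g_i:=\lambda(\beta)-\EE\big[\log(Z_i/Z_{i-1})\mid\f_{i-1}\big]$. Since $Z_i/Z_{i-1}=\sum_x\mu_i(x)e^{\beta\eta(i,x)}$, Jensen applied in both directions (the pointwise bound $\log\sum_x\mu_i(x)e^{\beta\eta(i,x)}\ge\sum_x\mu_i(x)\beta\eta(i,x)$ for the lower one) gives $0\le g_i\le\lambda(\beta)-\beta\,\EE(\eta)$, and $\lambda(\beta)-F_n=\tfrac1n\sum_{i=1}^n\big(\lambda(\beta)-\log(Z_i/Z_{i-1})\big)$ is a sum whose conditional means are the $g_i$.

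\emph{The gap near $B$.} Because $P$ is SRW there are exactly $(2d)^n$ admissible paths of length $n$, each of $P$-probability $(2d)^{-n}$; so for $\theta\in(0,1)$ the bound $(\sum a_\gamma)^\theta\le\sum a_\gamma^\theta$ yields $\EE(Z_n^\theta)\le(2d)^{n(1-\theta)}e^{n\lambda(\theta\beta)}$, hence by concavity of $x\mapsto x^\theta$, $p(\beta)\le\tfrac1\theta\big((1-\theta)\log(2d)+\lambda(\theta\beta)\big)$. Taking $\theta=t/\beta$ with $t\in(0,B)$ fixed shows $p(\beta)=O(\beta)$, while $\log Z_n\ge\beta E\sum_{i=1}^n\eta(i,\omega_i)$ gives $p(\beta)\ge\beta\,\EE(\eta)$. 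Since $\lim_{t\to B}\lambda(t)/t=\infty$ makes $p(\beta)$ and $\beta\EE(\eta)$ both $o(\lambda(\beta))$, I get $\lambda(\beta)-p(\beta)=\big(\lambda(\beta)-\beta\EE(\eta)\big)\big(1-o(1)\big)$ as $\beta\to B$; equivalently the average deficit $(\lambda(\beta)-\beta\EE(\eta))-\tfrac1n\sum_{i\le n}g_i$ converges to $p(\beta)-\beta\EE(\eta)=o(\lambda(\beta))$.

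\emph{Per-step estimate, and the main obstacle.} Fix a threshold $\vartheta\in(0,1)$ and put $\sigma_i:=\sum_{x:\mu_i(x)>\vartheta}\mu_i(x)$. Restricting $\sum_x\mu_i(x)e^{\beta\eta(i,x)}$ to the atoms of size $\le\vartheta$ and renormalising, the restricted sum equals $(1-\sigma_i)e^{\lambda(\beta)}X_i$, where $X_i:=\sum_{x:\mu_i(x)\le\vartheta}\tfrac{\mu_i(x)}{1-\sigma_i}e^{\beta\eta(i,x)-\lambda(\beta)}$ is a weighted average, with weights $\le\vartheta/(1-\sigma_i)$, of independent mean-one variables, with $\EE[X_i\mid\f_{i-1}]=1$. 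From $\log(Z_i/Z_{i-1})\ge\log\big((1-\sigma_i)e^{\lambda(\beta)}X_i\big)$ and $\EE[\log X_i\mid\f_{i-1}]=-\EE[\psi(X_i)\mid\f_{i-1}]$, $\psi(x):=x-1-\log x\ge0$, I obtain $g_i\le\log\tfrac1{1-\sigma_i}+\EE[\psi(X_i)\mid\f_{i-1}]$, and the last term is to be bounded by a quantity $D(\vartheta)=D(\vartheta;\beta)$ that remains finite as $\beta\to B$ (for fixed $\vartheta$) and is small for $\vartheta$ small. \textbf{This is the step I expect to be hardest}: since $\beta>B/2$ near $B$, the variable $X_i$ need not have a second moment, so the classical $L^2$ control of $\EE\psi(X_i)$ is unavailable. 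Instead one must run a fractional-moment argument, using a von Bahr--Esseen inequality $\EE|X_i-1|^{1+\eps}\lesssim(\vartheta/(1-\sigma_i))^{\eps}$ together with the power-mean bound $\EE X_i^{1+\eps}\le\EE\big(e^{(1+\eps)\beta\eta-(1+\eps)\lambda(\beta)}\big)$ valid for $(1+\eps)\beta<B$, and then estimate $\EE\psi(X_i)$ by separating $\{|X_i-1|\le\tfrac12\}$ from its complement (on which $\psi(X_i)\le X_i-1$). This is exactly the mechanism that allows the environment hypothesis to be weakened to \eqref{mgf_assumption}, and it is the quantitative enhancement of the argument behind Vargas's Theorem~3.6 that underlies Theorem~\ref{partial_apa_background}.

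\emph{Chaining.} By Theorem~\ref{convergence_background} (valid for $\beta<B$) one has $F_n\to p(\beta)$ a.s., and a martingale strong law removes the fluctuations of $\tfrac1n\sum_{i\le n}\big((\lambda(\beta)-\log(Z_i/Z_{i-1}))-g_i\big)$, giving $\tfrac1n\sum_{i\le n}g_i\to\lambda(\beta)-p(\beta)$ a.s. Since $0\le g_i\le\lambda(\beta)-\beta\EE(\eta)$ and the average deficit tends a.s.\ to the $o(\lambda(\beta))$ quantity $p(\beta)-\beta\EE(\eta)$, Markov's inequality shows that for every $K$ the fraction of $i\le n$ with $g_i\le K$ is a.s.\ at most $\big(p(\beta)-\beta\EE(\eta)\big)/\big(\lambda(\beta)-\beta\EE(\eta)-K\big)+o_n(1)$, which vanishes as $\beta\to B$. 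Taking $K=\log\tfrac1c+D(\vartheta)$, for the remaining indices the per-step estimate forces $\sigma_i>1-c$. A short geometric step then transfers this from $\mu_i=\rho_{i-1}(\omega_i=\cdot)$ back to $\rho_{i-1}(\omega_{i-1}=\cdot)$: every $\vartheta$-atom of $\mu_i$ has a $\vartheta$-atom of $\rho_{i-1}(\omega_{i-1}=\cdot)$ among its $2d$ SRW-predecessors, so $\rho_{i-1}(\omega_{i-1}=\cdot)$ puts mass $>1-c$ on a set of $O(1/\vartheta)$ sites, whence $\rho_{i-1}(\omega_{i-1}\in\a_{i-1}^\eps)>1-c-O(\vartheta)$ for $\eps:=\vartheta^2/C$ with $C=C(d)$. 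Re-indexing, $\liminf_n\tfrac1n\sum_{i=0}^{n-1}\rho_i(\omega_i\in\a_i^\eps)\ge(1-c-O(\vartheta))(1-o_\beta(1))$ a.s.; given $\delta<1$ one first chooses $c$ and $\vartheta$ so small that $1-c-O(\vartheta)>\delta$, and then $\beta_0<B$ so close to $B$ that the right-hand side is at least $\delta$ for every $\beta\in(\beta_0,B)$.
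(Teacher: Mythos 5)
The step you yourself flag as the hardest is not merely hard: as stated it is false, and the chaining built on it collapses. Your claim is that $\EE[\psi(X_i)\mid\f_{i-1}]\le D(\vartheta;\beta)$ with $D$ remaining bounded as $\beta\to B$ for fixed $\vartheta$ (and small for small $\vartheta$). Take $\mu_i$ uniform on $M=\lceil 1/\vartheta\rceil$ sites, so that $\sigma_i=0$ and every weight equals $1/M\le\vartheta$, which is an admissible configuration (it literally occurs at $i=1$ when $\vartheta<1/(2d)$). Then $X_i=\tfrac1M\sum_{j\le M}e^{\beta\eta_j-\lambda(\beta)}$, and since $\tfrac1M\sum_j e^{\beta\eta_j}\le e^{\beta\max_{j\le M}\eta_j}$ we get $\EE[\psi(X_i)]=-\EE\log X_i\ \ge\ \lambda(\beta)-\beta\,\EE\big[\max_{j\le M}\eta_j\big]\ \ge\ \lambda(\beta)-\beta M\,\EE(\eta^+)$, which tends to infinity as $\beta\to B$ for each fixed $\vartheta$ --- indeed it is $(1-o(1))\lambda(\beta)$ --- precisely because of the hypothesis $\lambda(t)/t\to\infty$. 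No von Bahr--Esseen or fractional-moment estimate can contradict this explicit lower bound; and in any case, near $B$ you must take $1+\eps\downarrow 1$ to keep $(1+\eps)\beta<B$, so the factor $\vartheta^{\eps}$ no longer supplies smallness while $e^{\lambda((1+\eps)\beta)-(1+\eps)\lambda(\beta)}$ is uncontrolled.

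This destroys the counting step: for the implication ``$g_i>K\Rightarrow\sigma_i>1-c$'' you must take $K\ge\log\tfrac1c+\sup D\ \ge\ \lambda(\beta)-O_\vartheta(\beta)$, so $\lambda(\beta)-\beta\EE(\eta)-K=O_\vartheta(\beta)$ and the Markov bound $\big(p(\beta)-\beta\EE(\eta)\big)/\big(\lambda(\beta)-\beta\EE(\eta)-K\big)$ is of order one (numerator and denominator are both comparable to $\beta$), not $o(1)$; you never get a density of localized steps approaching $1$. The conceptual point is that at a \emph{fixed} atom scale $\vartheta$, a step spread over $\asymp 1/\vartheta$ sites has one-step quenched gain at most $\beta\,\EE[\max]$, i.e.\ linear in $\beta$, while the annealed gain $\lambda(\beta)$ is superlinear near $B$; hence delocalized-at-scale-$\vartheta$ steps also have deficit within $O_\vartheta(\beta)$ of the maximum, and free-energy bookkeeping (which controls the average deficit only to precision $O(\beta)$) cannot distinguish them from localized steps. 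The law-of-large-numbers mechanism you invoke (``many small weights $\Rightarrow X_i\approx 1$'') genuinely needs either stronger integrability (the $\lambda(2\beta)<\infty$ regime) or a number of effective sites growing with $\beta$; the latter would only yield localization at a scale $\eps(\beta)\to 0$, and since the mass on $\eps$-atoms is decreasing in $\eps$, that does not give the single $\eps>0$ valid for all $\beta\in(\beta_0,B)$ that the statement demands. For what it is worth, the paper does not prove this theorem at all --- it quotes it from Vargas --- and its own stronger Theorem \ref{apa_new} is established by the compactification/variational machinery exactly because per-step moment estimates of this kind degenerate as $\beta\to\beta_{\max}$. The remaining ingredients of your outline (the $O(\beta)$ upper bound on $p(\beta)$ via fractional moments, the lower bound $p(\beta)\ge\beta\EE(\eta)$, and the SRW neighbor argument transferring atoms of $\mu_i$ to atoms of $\rho_{i-1}$) are fine, but they do not compensate for the missing per-step estimate.
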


We conclude this section by recalling a simple sufficient condition for localization.
Notice that the condition depends only on the entropy of the random walk increment, not its actual distribution.
One can therefore interpret the result as providing a temperature threshold below which concentration of the polymer measure is strong enough to overcome any superdiffusivity of the random walk.

\begin{theirthm}[{\cite[Proposition 5.1]{comets07}}] \label{localization_sufficient_background}
Assume $\lambda(\beta) < \infty$, and also that $q(x) \coloneqq P(\omega_1 = x)$ has finite entropy.
If
\eq{
\beta \lambda'(\beta) - \lambda(\beta) > -\sum_{x \in \Z^d} q(x)\log q(x),
}
then $p(\beta) < \lambda(\beta)$.
\end{theirthm}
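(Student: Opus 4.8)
The plan is to control a \emph{fractional moment} of the partition function: for a suitable $\theta\in(0,1)$ we show that $\EE[Z_n^\theta]$ decays exponentially below $e^{\theta n\lambda(\beta)}$, and then pass to an almost-sure bound on $F_n$ via Markov's inequality and the Borel--Cantelli lemma. To get the moment estimate, fix $\theta\in(0,1)$, write $Z_n=\sum_{x_1,\dots,x_n}\big(\prod_{i=1}^n q(x_i-x_{i-1})\big)\exp\!\big(\beta\sum_{i=1}^n\eta(i,x_i)\big)$ (with $x_0:=0$) as a sum of nonnegative terms, apply the elementary inequality $(\sum_j a_j)^\theta\le\sum_j a_j^\theta$, and then use Tonelli together with the independence of the finitely many variables $\eta(1,x_1),\dots,\eta(n,x_n)$ (the space--time sites are distinct along any path). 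This yields
\eq{
\EE[Z_n^\theta]\ \le\ \sum_{x_1,\dots,x_n}\Big(\prod_{i=1}^n q(x_i-x_{i-1})^\theta\Big)\EE\Big[e^{\theta\beta\sum_{i=1}^n\eta(i,x_i)}\Big]\ =\ \Big(e^{\lambda(\theta\beta)}\sum_{x\in\Z^d}q(x)^\theta\Big)^{\!n}\ =:\ c(\theta)^n,
}
where $\lambda(\theta\beta)<\infty$ because $\lambda$ is convex with $\lambda(0)=0$ and $\lambda(\beta)<\infty$.

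It then remains to find $\theta\in(0,1)$ with $c(\theta)<e^{\theta\lambda(\beta)}$, equivalently with $\Phi(\theta):=\theta\lambda(\beta)-\lambda(\theta\beta)-\log\sum_{x}q(x)^\theta$ strictly positive; note $\Phi(1)=0$. As $\theta\uparrow1$, differentiability of $\lambda$ at $\beta$ (which holds since $\beta<\beta_{\max}$) gives $\lambda(\theta\beta)=\lambda(\beta)-(1-\theta)\beta\lambda'(\beta)+o(1-\theta)$, and differentiating $\theta\mapsto\sum_x q(x)^\theta$ at $\theta=1$ gives $\log\sum_x q(x)^\theta=(1-\theta)H+o(1-\theta)$, where $H:=-\sum_x q(x)\log q(x)$. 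Therefore
\eq{
\log c(\theta)-\theta\lambda(\beta)\ =\ -(1-\theta)\big(\beta\lambda'(\beta)-\lambda(\beta)-H\big)+o(1-\theta)\qquad(\theta\uparrow1).
}
By hypothesis $\beta\lambda'(\beta)-\lambda(\beta)-H>0$, so the right-hand side is strictly negative for $\theta\in(0,1)$ close enough to $1$. Fix such a $\theta$ and set $\delta:=\lambda(\beta)-\tfrac1\theta\log c(\theta)>0$. Markov's inequality and the moment estimate then give $\PP\big(Z_n\ge e^{n(\lambda(\beta)-\delta/2)}\big)\le c(\theta)^n e^{-n\theta(\lambda(\beta)-\delta/2)}=e^{-n\theta\delta/2}$, which is summable in $n$; by Borel--Cantelli, almost surely $F_n=\tfrac1n\log Z_n\le\lambda(\beta)-\delta/2$ for all large $n$, and hence $p(\beta)=\lim_n F_n\le\lambda(\beta)-\delta/2<\lambda(\beta)$.

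The Taylor bookkeeping above — matching the first-order behaviour of $\lambda$ at $\beta$ against that of $\theta\mapsto\sum_x q(x)^\theta$ at $\theta=1$, so that the cutoff lands precisely at $\beta\lambda'(\beta)-\lambda(\beta)=H$ — is the heart of the proof, but it is routine. The step that genuinely requires care is that the moment estimate is vacuous unless $\sum_x q(x)^\theta<\infty$ for $\theta$ near $1$: this holds automatically for finite-range walks and for walks in the domain of attraction of an $\alpha$-stable law (the setting of \cite{comets07}), yet it can fail for a general increment law of finite entropy. To cover the general case one would first run the argument for the finite-range walk obtained by restricting $q$ to a ball $\{|x|\le R\}$ and renormalizing — its increment entropy converges to $H$ as $R\to\infty$ while the annealed free energy $\lambda(\beta)$ is unchanged — and then bound the contribution to $Z_n$ of paths that ever take a step of length exceeding $R$; this last comparison is where the remaining technical work lies.
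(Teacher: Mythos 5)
This statement is quoted background (Theorem \ref{localization_sufficient_background} is cited from Comets' paper); the present paper contains no proof of it, so the relevant comparison is with the standard argument in the literature, which is precisely the fractional-moment computation you give. Your core steps are correct: the bound $\EE[Z_n^\theta]\le\big(e^{\lambda(\theta\beta)}\sum_x q(x)^\theta\big)^n$ (using $(\sum_j a_j)^\theta\le\sum_j a_j^\theta$ and the independence of $\eta(1,x_1),\dots,\eta(n,x_n)$ along a path), the first-order expansion at $\theta=1$ that matches $\beta\lambda'(\beta)-\lambda(\beta)$ against the Shannon entropy $H$, and the passage from the moment estimate to $p(\beta)<\lambda(\beta)$ via Markov and Borel--Cantelli together with the a.s.\ convergence $F_n\to p(\beta)$ (Proposition \ref{free_energy_converges}); one could even shortcut the last step with $\EE\log Z_n\le\tfrac1\theta\log\EE[Z_n^\theta]$.

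However, as a proof of the theorem \emph{as stated}---where the only hypothesis on the walk is finite Shannon entropy of $q$---there is a genuine gap, and it is exactly the one you flag in your last paragraph without closing it. Your estimate is vacuous unless $\sum_x q(x)^\theta<\infty$ for some $\theta<1$, i.e.\ unless a R\'enyi entropy of order less than one is finite, and this is strictly stronger than finite Shannon entropy: e.g.\ in $d=1$, spread mass $2^{-k}$ uniformly over $e^{2^k/k^2}$ sites in the $k$-th block; then $-\sum_x q(x)\log q(x)=\sum_k\big(k2^{-k}\log 2+k^{-2}\big)<\infty$ while $\sum_x q(x)^\theta=\sum_k e^{(1-\theta)2^k/k^2}2^{-k\theta}=\infty$ for every $\theta<1$. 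The repair you sketch---truncate $q$ to $\{\|x\|_1\le R\}$ and then ``bound the contribution to $Z_n$ of paths that ever take a step of length exceeding $R$''---is not a routine comparison: the annealed mass of those paths is of order $n(1-Q_R)e^{n\lambda(\beta)}$ with $Q_R=P(\|\omega_1\|_1\le R)$, which is \emph{not} exponentially smaller than $e^{n\lambda(\beta)}$, so a first-moment/Markov bound cannot dispose of them, and their fractional moments are again infinite for the same reason as before; one needs a further idea (for instance a coarse-graining over the long-jump times, whose entropy cost is about $n h(1-Q_R)$, together with a factorization of the environment across the excursions between long jumps) to conclude. So the verdict is: correct, and the standard route, under the extra summability $\sum_x q(x)^\theta<\infty$ for some $\theta<1$ (which covers finite-range walks and the regularly varying walks of the long-range setting the result is quoted from), but incomplete for a general increment law with merely finite entropy.
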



\subsection{Results of this paper} \label{results_this_paper}
The temperature range at which localization occurs depends on $P$, but to the extent described in our two main results below, the type of localization does not.
We will also prove Theorems \ref{convergence_background} and \ref{phase_transition_background} in the general setting so that the statements of localization results make sense.

First, it was recently shown in \cite{bates-chatterjee17} that $\beta_0$ in \eqref{eps_atom_delta} can be taken equal to $\beta_c$.
Alternatively, the right-hand side of \eqref{eps_atom_eps} can be taken arbitrarily close to $1$.
These realizations continue to hold true when $P$ is any walk satisfying \eqref{walk_assumption}, as demonstrated in the following improvement of Theorems \ref{localization_background} and \ref{partial_apa_background}, thus extending \cite[Theorem 1.1]{bates-chatterjee17} beyond the SRW case.

\begin{thm} \label{apa_new}
Assume \eqref{walk_assumption} and \eqref{mgf_assumption}.
\begin{itemize}
\item[(a)] If $p(\beta) < \lambda(\beta)$, then for every sequence $\eps_i \to 0$,
\eq{
\lim_{n\to\infty}\frac{1}{n}\sum_{i=0}^{n-1} \rho_i(\omega_i \in \a_i^{\eps_i}) = 1 \quad \mathrm{a.s.}
}
\item[(b)] If $p(\beta) = \lambda(\beta)$, then there exists a sequence $\eps_i \to 0$ such that
\eq{
\lim_{n\to\infty}\frac{1}{n}\sum_{i=0}^{n-1} \rho_i(\omega_i \in \a_i^{\eps_i}) = 0 \quad \mathrm{a.s.}
}
\end{itemize}
\end{thm}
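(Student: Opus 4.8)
I would follow the scheme of \cite{bates-chatterjee17}, adapted to an arbitrary reference walk and the weakened moment hypothesis \eqref{mgf_assumption}. Write $\mu_i(x):=\rho_i(\omega_i=x)$ for the quenched endpoint distribution, $q(x):=P(\omega_1=x)$, and $\wh\mu_i:=\mu_i*q$ for its one-step push-forward under the walk, so that $\rho_i(\omega_i\in\a_i^\eps)=\mu_i(\{x:\mu_i(x)>\eps\})$, which I abbreviate $\mu_i(\{\mu_i>\eps\})$. Conditioning on $\omega_1,\dots,\omega_i$ gives the exact recursion $Z_{i+1}/Z_i=\iprod{\wh\mu_i}{e^{\beta\eta(i+1,\cdot)}}=\sum_y\wh\mu_i(y)e^{\beta\eta(i+1,y)}$, hence $F_n=\frac1n\sum_{i=0}^{n-1}\log\iprod{\wh\mu_i}{e^{\beta\eta(i+1,\cdot)}}$. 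Using this identity and Jensen one checks that $n\mapsto\EE\log Z_n$ is superadditive, so $\EE F_n\to p(\beta):=\sup_n\EE F_n\le\lambda(\beta)$; concentration of $F_n$ about $\EE F_n$---via the martingale $\log Z_n-\EE[\log Z_n\mid\f_k]$, with the unboundedness of $\eta$ absorbed by \eqref{mgf_assumption} through truncation---upgrades this to $F_n\to p(\beta)$ a.s., reproving Theorems \ref{convergence_background} and \ref{phase_transition_background} for general $P$. Subtracting the $\f_i$-martingale part of $\sum_i\log\iprod{\wh\mu_i}{e^{\beta\eta(i+1,\cdot)}}$ and applying a strong law for its increments (again controlled via \eqref{mgf_assumption}) gives the quenched free-energy identity
\eq{
\frac1n\sum_{i=0}^{n-1}\vphi_\beta(\wh\mu_i)\ \longrightarrow\ p(\beta)\quad\PP\text{-a.s.},\qquad \vphi_\beta(\nu):=\EE\Big[\log\sum_y\nu(y)\,e^{\beta\eta_y}\Big],
}
with $(\eta_y)$ i.i.d.\ copies of $\eta$. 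By Jensen $\vphi_\beta(\nu)\le\lambda(\beta)$ for every probability measure $\nu$, with equality never attained; thus the localized regime $p(\beta)<\lambda(\beta)$ says this Ces\`aro average retains a positive defect, while the delocalized regime says it converges to $\lambda(\beta)$.

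Next I would isolate a purely deterministic \emph{walk-step comparison}. Since $q^*:=\max_x q(x)\in(0,1)$---the strict inequality being precisely \eqref{walk_assumption}---a mass-$a$ atom of $\mu$ at $x$ produces a mass-$\ge aq^*$ atom of $\mu*q$ at $x+x^*$ (where $q(x^*)=q^*$), distinct atoms producing distinct ones, while the restriction of $\mu$ to atoms $\le\eps$ convolves to sup-norm $\le\eps$. Hence, for every probability measure $\mu$ and every $\eps>0$,
\eq{
\mu(\{\mu>\eps\})\ \le\ \tfrac1{q^*}\,(\mu*q)(\{\mu*q>\eps q^*\}),\qquad (\mu*q)(\{\mu*q>2\eps\})\ \le\ 2\,\mu(\{\mu>\eps\}).
}
These bounds let one move back and forth between assertions about $\mu_i$ and about $\wh\mu_i$, and are essentially the only place where the generality of the walk plays a role.

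Following \cite{bates-chatterjee17}, I would then embed the probability measures on $\Z^d$ modulo translation into a compact space $\overline{\m}$ of ``partitioned sub-probability measures'' (recording mass that escapes to infinity and clusters that split off to infinite separation), and extend $\vphi_\beta$ to a continuous, bounded functional $\overline{\vphi}_\beta$ on $\overline{\m}$ whose maximal value $\lambda(\beta)$ is attained only at the element $\mathbf{0}$ with all mass escaped. One verifies that convolution by $q$ induces a continuous self-map of $\overline{\m}$, that the empirical measures $\frac1n\sum_{i<n}\delta_{[\wh\mu_i]}$ are tight, and that every subsequential weak limit $\nu^*$ satisfies $\int\overline{\vphi}_\beta\,\dd\nu^*=p(\beta)$ (from the quenched identity) and is a fixed point of the measure-valued dynamics induced by the polymer recursion. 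In the delocalized case $\int(\lambda(\beta)-\overline{\vphi}_\beta)\,\dd\nu^*=0$, so $\nu^*=\delta_{\mathbf{0}}$; unwinding the topology gives $\lim_{\eps\to0}\limsup_n\frac1n\sum_{i<n}\wh\mu_i(\{\wh\mu_i>\eps\})=0$ a.s., whereupon the first bound above together with a routine block-diagonalization produces a single deterministic sequence $\eps_i\downarrow0$ with $\frac1n\sum_{i<n}\mu_i(\{\mu_i>\eps_i\})\to0$ a.s.---this is (b). In the localized case one shows, using the variational characterization of $p(\beta)$ carried by the identity together with the ``atomizing'' effect of one environment tilt on a spread-out cluster (and a strict monotonicity of $\overline{\vphi}_\beta$ under dissolving a cluster into escaped mass), that $p(\beta)<\lambda(\beta)$ forces $\nu^*$ to be supported on configurations whose clusters are single atoms and to give no mass to $\mathbf{0}$. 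This yields $\lim_{\eps\to0}\limsup_n\frac1n\sum_{i<n}\wh\mu_i(\{\wh\mu_i\le\eps\})=0$ a.s., and the second bound above upgrades it to $\lim_{\eps\to0}\limsup_n\frac1n\sum_{i<n}\mu_i(\{\mu_i\le\eps\})=0$ a.s., which is exactly statement (a): that $\frac1n\sum_{i<n}\mu_i(\{\mu_i>\eps_i\})\to1$ for \emph{every} $\eps_i\to0$.

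The main obstacle is the localized case of the last step: upgrading the positive-density macroscopic localization of Theorem \ref{localization_background} (in Wei's general-$P$ form) to the statement that, in Ces\`aro average, \emph{all} the endpoint mass sits on macroscopic atoms---the central result of \cite{bates-chatterjee17} in the SRW case. The new difficulty is that the one-step dynamics now convolves by an arbitrary, possibly heavy-tailed and long-range, increment law $q$, so one must check that the compactness, continuity, and monotonicity properties of $\overline{\vphi}_\beta$ that drive that argument persist without finite range---in particular that convolution by $q$ does not, in the limit, manufacture escaped mass in a way that corrupts the walk-step comparison, and that the environment tilt still atomizes spread-out clusters as it does for SRW. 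Re-establishing the concentration and strong-law inputs of the first step under the minimal moment condition \eqref{mgf_assumption} (in place of $\lambda(t)<\infty$ for all $t$) is a secondary but necessary technical point.
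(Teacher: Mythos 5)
Your overall skeleton (compactify the endpoint distributions modulo translation, show the empirical measure of the chain converges to fixed points of the induced dynamics, identify the Ces\`aro energy average with $p(\beta)$, then read off atomicity from the support of the limiting fixed points) is the same route the paper takes, and your ``walk-step comparison'' between $\mu_i$ and $\mu_i*q$ is correct, though peripheral---the paper simply builds the convolution into the update map $T$ and never needs it. The delocalized case (b) is essentially right: there the strict Jensen inequality $R(f)<\lambda(\beta)$ for $f\neq\vc 0$ together with $\int R\,\dd\nu^*=p(\beta)=\lambda(\beta)$ forces $\nu^*=\delta_{\vc 0}$, exactly as in Theorem \ref{characterization}(a).

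The genuine gap is in the localized case (a). Knowing only that a subsequential limit $\nu^*$ is a fixed point with $\int R\,\dd\nu^*=p(\beta)<\lambda(\beta)$ does \emph{not} force $\nu^*(\{\|f\|=1\})=1$: a fixed point decomposes as $\nu^*=t\delta_{\vc 0}+(1-t)\nu_1$ with $\nu_1$ supported on $\{\|f\|=1\}$, and since $R(\vc 0)=\lambda(\beta)$ is the \emph{maximum} of $R$, the energy identity alone is compatible with $t>0$ compensated by $\r(\nu_1)<p(\beta)$. Your proposed mechanism cannot close this: the ``strict monotonicity of $\overline{\vphi}_\beta$ under dissolving a cluster into escaped mass'' goes in the wrong direction (dissolving \emph{raises} the energy toward $\lambda(\beta)$), and nothing in an ``atomizing tilt'' argument bounds $\r(\nu_1)$ from below; also the conclusion you aim for (``clusters are single atoms'') is neither needed nor true---what is needed is only full retained mass. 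The missing ingredient is the paper's convexity/optimal-initial-condition step, Lemma \ref{upper_bound_lemma}: for every starting configuration $f_0$, $\sum_{i=0}^{n-1}\r(\t^i\delta_{f_0})\geq\EE(\log Z_n)$, with equality only for the point mass $\vc 1$. Integrating this over a fixed point $\nu\in\k$ gives $\r(\nu)\geq p(\beta)$, i.e.\ the hard direction of the variational formula \eqref{variational_formula}; only then does $p(\beta)=t\lambda(\beta)+(1-t)\r(\nu_1)\geq t\lambda(\beta)+(1-t)p(\beta)$ force $t=0$ when $p(\beta)<\lambda(\beta)$, which is Theorem \ref{characterization}(b) and the heart of part (a). Without this lower bound on the energy of fixed points you can only recover partial atomicity of the Vargas type, not the full statement $\lim_n\frac1n\sum_{i<n}\rho_i(\omega_i\in\a_i^{\eps_i})=1$. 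Your closing paragraph correctly flags the continuity of the update map under a general $P$ and the weakened moment hypothesis \eqref{mgf_assumption} as technical work, but those are addressed in Proposition \ref{continuous1} and Section \ref{updated_proofs}; the conceptual omission is Lemma \ref{upper_bound_lemma} and the variational formula it yields.
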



This result is later stated and proved as Theorem \ref{total_mass}.
In case (a) above, we say that the sequence $(\rho_i(\omega_i \in \cdot))_{i\geq 0}$ is \textit{asymptotically purely atomic}\footnote{To the author's knowledge, this terminology was first used by Vargas, who proved in \cite[Corollary 3.3]{vargas07} that if $\lambda(\beta) = \infty$, then condition  (a) of Theorem \hyperref[apa_new]{1.1} holds with $P = $ SRW and ``in probability" replacing ``almost surely".}, which is meant to indicate that \textit{all} of the limiting mass is atomic, not just an $\eps$-fraction.
Through similar methods it can be shown that low-temperature polymer measures exhibit a type of geometric localization in addition to atomic localization.
That is, the macroscopic atoms can be seen close together.
Our second main result generalizes \cite[Theorem 1.2]{bates-chatterjee17}.

\begin{thm} \label{geometric_new}
Assume \eqref{walk_assumption} and \eqref{mgf_assumption}.
Let $\g_{\delta,K}$ denote the set of probability mass functions on $\Z^d$ that assign measure greater than $1-\delta$ to some subset of $\Z^d$ having $\ell^1$-diameter at most $K$, and define $f_i(x) \coloneqq \rho_i(\omega_i = x)$.
\begin{itemize}
\item[(a)] If $p(\beta) < \lambda(\beta)$, then for any $\delta > 0$, there exists $K < \infty$ and $\theta > 0$ such that
\eq{
\liminf_{n\to\infty} \frac{1}{n}\sum_{i=0}^{n-1} \one_{\{f_i \in \g_{\delta,K}\}} \geq \theta \quad \mathrm{a.s.}
}
\item[(b)] If $p(\beta) = \lambda(\beta)$, then for any $\delta \in (0,1)$ and any $K > 0$,
\eq{
\lim_{n\to\infty}\frac{1}{n}\sum_{i=0}^{n-1} \one_{\{f_i \in \g_{\delta,K}\}} = 0 \quad \mathrm{a.s.}
}
\end{itemize}
\end{thm}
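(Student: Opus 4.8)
The plan is to obtain part (b) directly from the atomic localization of Theorem~\ref{apa_new}(b), and to extract part (a) from the same analysis of recentered endpoint distributions that proves Theorem~\ref{apa_new}(a). For (b), note first that $\g_{\delta,K}$-membership forces a macroscopic atom: if $f_i\in\g_{\delta,K}$, then more than $1-\delta$ of the mass of $f_i$ sits on a set of at most $N_K:=\#\{y\in\Z^d:\|y\|_1\le K\}$ points, so some site carries mass exceeding $\eps_0:=(1-\delta)/N_K$, i.e.\ $\a_i^{\eps_0}\ne\varnothing$. Let $(\eps_i)$ be a sequence as furnished by Theorem~\ref{apa_new}(b); since $\eps_i\to0$ we have $\eps_i<\eps_0$ for all large $i$, whence $\a_i^{\eps_0}\subseteq\a_i^{\eps_i}$ and, on $\{\a_i^{\eps_0}\ne\varnothing\}$, $\rho_i(\omega_i\in\a_i^{\eps_i})\ge\max_{x\in\a_i^{\eps_0}}f_i(x)>\eps_0$. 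Thus $\one_{\{f_i\in\g_{\delta,K}\}}\le\one_{\{\a_i^{\eps_0}\ne\varnothing\}}\le\eps_0^{-1}\rho_i(\omega_i\in\a_i^{\eps_i})$ for large $i$, and averaging over $i$ and invoking Theorem~\ref{apa_new}(b) gives $\frac1n\sum_{i=0}^{n-1}\one_{\{f_i\in\g_{\delta,K}\}}\to0$ almost surely; as $(\eps_i)$ does not depend on $\delta$ or $K$, this settles all admissible $\delta$ and $K$ at once.

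For (a), I would work with the empirical measure $\hat\nu_n:=\frac1n\sum_{i=0}^{n-1}\delta_{[f_i]}$, where $[f_i]$ denotes the translation-equivalence class of the endpoint distribution $f_i$, regarded as a random element of a suitable compact metrizable space $\tilde{\m}$ that encodes sub-probability measures on $\Z^d$ up to translation (the precise construction---which must account for mass possibly escaping to infinity---being as in \cite{bates-chatterjee17}). The input I need, which is the core of the generalization carried out in this paper, is that $\hat\nu_n$ converges almost surely to a deterministic limit $\hat\nu_\infty$, and that in the localized regime $p(\beta)<\lambda(\beta)$ no mass leaks away in the limit: $\hat\nu_\infty$ charges $\{[\mu]:\mu(\Z^d)=1\}$. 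This ``full total mass'' statement is the quantitative form of being asymptotically purely atomic, and is exactly what prevents the macroscopic atoms from spreading apart at a diverging rate; it is proved alongside, and is morally stronger than, Theorem~\ref{apa_new}(a).

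Granting this, the conclusion of (a) follows by a soft argument. For $K>0$ let $\tilde{\g}_{\delta,K}\subseteq\tilde{\m}$ consist of those classes $[\mu]$ admitting a representative that assigns more than $1-\delta$ to some set of $\ell^1$-diameter at most $K$; this set is open and translation invariant (its lift is a union, over the finitely many admissible shapes and over all integer translates, of the open sets $\{\mu:\mu(S)>1-\delta\}$), the family increases with $K$, and $\bigcup_K\tilde{\g}_{\delta,K}\supseteq\{[\mu]:\mu(\Z^d)=1\}$ because a probability measure on $\Z^d$ always puts more than $1-\delta$ of its mass on a finite---hence, after translation, bounded---set. Therefore $\hat\nu_\infty(\tilde{\g}_{\delta,K})\to1$ as $K\to\infty$, so I may fix a deterministic $K=K(\delta)<\infty$ (indeed with $\hat\nu_\infty(\tilde{\g}_{\delta,K})$ as close to $1$ as desired) and set $\theta:=\hat\nu_\infty(\tilde{\g}_{\delta,K})>0$. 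By the portmanteau theorem together with $\hat\nu_n\to\hat\nu_\infty$ a.s.,
\eq{
\liminf_{n\to\infty}\frac1n\sum_{i=0}^{n-1}\one_{\{f_i\in\g_{\delta,K}\}}=\liminf_{n\to\infty}\hat\nu_n(\tilde{\g}_{\delta,K})\ge\hat\nu_\infty(\tilde{\g}_{\delta,K})=\theta\quad\text{a.s.},
}
where the first equality uses that a probability mass function $f_i$ lies in $\g_{\delta,K}$ exactly when $[f_i]\in\tilde{\g}_{\delta,K}$. This is (a), with $K$ and $\theta$ as just chosen.

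The main obstacle is the input granted above: the almost sure convergence of $\hat\nu_n$ and, above all, the absence of escaping mass when $p(\beta)<\lambda(\beta)$. This is precisely where the simple-random-walk arguments of \cite{bates-chatterjee17} break down and must be rebuilt---a general reference walk may be heavy-tailed, so the finite-range estimates used there are unavailable and must be replaced throughout by bounds that invoke only the exponential moment hypothesis \eqref{mgf_assumption} together with entropy-type control on the walk increments. One should also check that $\hat\nu_\infty$ is genuinely non-random; this comes from the ergodicity of the i.i.d.\ environment under the time shift, as in \cite{bates-chatterjee17}, and is what permits choosing $K$ and $\theta$ deterministically.
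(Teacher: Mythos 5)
Your reduction of part (b) to Theorem \ref{apa_new}(b) is correct: if $f_i\in\g_{\delta,K}$ then by pigeonhole some site carries mass exceeding $\eps_0=(1-\delta)/N_K$, so $\one_{\{f_i\in\g_{\delta,K}\}}\leq\eps_0^{-1}\rho_i(\omega_i\in\a_i^{\eps_i})$ once $\eps_i<\eps_0$, and averaging gives the claim. This is in the spirit of the argument the paper imports from \cite{bates-chatterjee17}.

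Part (a), however, has a genuine gap, and it sits exactly where the real work is. First, the input you grant yourself is stronger than what is available: neither this paper nor \cite{bates-chatterjee17} proves that the empirical measure converges almost surely to a single deterministic limit $\hat\nu_\infty$. What is proved is $\w_\alpha(\mu_n,\m)\to0$ a.s.\ (Theorem \ref{free_energy_thm}, \eqref{convergence_to_M}), i.e.\ convergence to the deterministic \emph{set} $\m$ of energy-minimizing fixed points, whose uniqueness is unknown; deterministic $K$ and $\theta$ must come from a uniformity argument over the compact set $\m$ (lower semicontinuity of $\nu\mapsto\nu(G)$ for open $G$ plus a finite covering), not from ergodicity of a limit measure that may not exist. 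That part could be repaired. The step that fails outright is the covering claim $\bigcup_K\wt\g_{\delta,K}\supseteq\{\|f\|=1\}$, justified by ``a probability measure on $\Z^d$ puts more than $1-\delta$ of its mass on a bounded set.'' The limit points live in the space $\s$ of \emph{partitioned} subprobability measures: an element with $\|f\|=1$ may have its mass split among several components whose relative positions have been quotiented away (they have drifted infinitely far apart), and if the largest component carries mass less than $1-\delta$, then no set of finite diameter captures mass $1-\delta$. So Theorem \ref{characterization}(b) (full mass on $\{\|f\|=1\}$ for $\nu\in\m$) does not give $\nu(\wt\g_{\delta,K})\to1$, nor even positivity without further argument. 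Indeed, if your soft argument were valid it would yield, for every $\delta$, a density $\theta$ arbitrarily close to $1$ --- essentially the full-density ``favorite region'' behavior, which is open in general and known only for the log-gamma polymer, as recalled in Section \ref{results_this_paper}. The proof the paper actually invokes, \cite[Section 7]{bates-chatterjee17} applied after adapting $(\s,d_\alpha)$, the continuity of $\t$, the variational formula and Theorem \ref{characterization}, has to work around precisely this splitting phenomenon, which is why it produces only some positive density $\theta$ rather than density close to one.
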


This result appears as Theorem \ref{localized_subsequence} in the sequel.
In case (a), we say that the sequence of measures $(\rho_i(\omega_i \in \cdot))_{i\geq 0}$ exhibits \textit{geometric localization with positive density}.
The constant $\theta$ gives a lower bound on the density of endpoint distributions displaying the desired level of concentration.
If $\theta$ can be taken equal to $1$, then essentially all large atoms remain close, so we would say the sequence is \textit{geometrically localized with full density}.
That this is the case is sometimes called the ``favorite region conjecture", although one possibility is that it is true only in low dimensions.
The only model for which it is known to be true is the one-dimensional log-gamma polymer \cite{seppalainen12}, whose exact solvability was leveraged in \cite{comets-nguyen16} to obtain a limiting law for the endpoint distribution.
The general method of \cite{bates-chatterjee17} also was to establish a limiting law for the endpoint distribution, but in a more abstract compactification space.
A sufficient condition for the existence of a favorite region was given \cite[Theorem 7.3(b)]{bates-chatterjee17}, although a way to check the condition is not currently known.

\subsubsection{Comment on moment assumptions}
While Theorems \ref{apa_new} and \ref{geometric_new} were shown in \cite{bates-chatterjee17} in the case $P$ is SRW, that paper also critically assumed $\lambda(\pm 2\beta) < \infty$.
Here we are establishing the results not only for an arbitrary reference walk, but also under the weaker hypothesis \eqref{mgf_assumption}.
In this way, the present study both expands previous results to the setting of a general walk, and optimizes assumptions even in the classical case.
While this latter point may seem minor, it actually permits parameter ranges for which the regions of interest had been previously left out.
For example, if the random environment consists of exponential random variables, say with rate parameter $1$, then
\eq{
\lambda(\beta) = -\log(1-\beta), \quad \beta < \beta_{\max} = 1.
}
The assumption $\lambda(\pm2\beta) < \infty$ only covers $\beta < \frac{1}{2}$.
By Theorem \ref{localization_sufficient_background}, we know that there is localization (in the SRW case) at any $\beta$ such that
$\beta\lambda'(\beta) - \lambda(\beta) 
 > \log(2d)$.
But for any positive integer $d$ (in particular $d \geq 3$, where localization does not necessarily occur),
\eq{
\lambda'(1/2)/2 - \lambda(1/2) = 1 + \log(1/2) < \log(2d).
}
Therefore, the temperature regime for which localization is actually known has no intersection with the regime in which hypothesis $\lambda(\pm2\beta) < \infty$ is true.
This paper eliminates this discrepancy by assuming only \eqref{mgf_assumption}.
The technical challenges incurred are non-trivial, but the fact that they can be overcome reflects the generality with which our methodology may be useful, possibly in contexts other than polymer models.
Indeed, this feature is itself a motivation for the present study.

\subsection{Outline of methods} \label{outline_methods_intro}
We write $f_n$ to denote the probability mass function on $\Z^d$ of the (random) endpoint distribution $\rho_n(\omega = \cdot)$ for the length-$n$ polymer.
It is not difficult to see that $(f_n)_{n\geq0}$ is an $\ell^1(\Z^d)$-valued Markov process with respect to the canonical filtration $(\f_n)_{n\geq0}$ (see Section \ref{update_map}).
Unfortunately, the space $\ell^1(\Z^d)$ is too large to establish any type of convergence.
More to the point, we cannot expect any tightness for $(\rho_n)_{n\geq0}$.

In \cite{bates-chatterjee17}, this issue is resolved by constructing a compact space in which to embed the Markov chain.
Specifically, the mass functions on $\Z^d$ are identified with mass functions on $\N \times \Z^d$, which themselves are part of a larger space:
\eq{
\s_0 \coloneqq \Big\{f : \N \times \Z^d \to [0,1] : \sum_{u \in \N \times \Z^d} f(u) \leq 1\Big\}.
}
A pseudometric $d_*$ can be defined on $\s_0$ such that the resulting quotient space $\s \coloneqq \s_0/(d_*=0)$ is compact.
After not too much work, one can show that the equivalence classes under $d_*$ are the orbits under translations.
In other words, the only information that is lost by passing from $\ell^1(\Z^d)$ to $\s$ is the location of the origin.
We call the elements of $\s$ \textit{partitioned subprobability measures}.

In hindsight, then, the only obstructions to compactness were translations, which form a \textit{non}-compact group under which $\Z^d$ is invariant.
Observed also for more general unbounded domains, this phenomenon is often called ``concentration compactness" in the study of PDEs and calculus of variations.
In the 1980s, Lions \cite{lions84I,lions84II,lions85I,lions85II} made highly effective use of the concept by transferring problems to compact spaces using concentration functions introduced by L\'evy \cite{levy54}, although the idea for this type of compactification scheme goes back to work of Parthasarathy, Ranga Rao and Varadhan~\cite{parthasarathy-rangarao-varadhan62}.
The particular construction in \cite{bates-chatterjee17} was inspired in part by a continuum version used by Mukherjee and Varadhan \cite{mukherjee-varadhan16} to prove large deviation principles for Brownian occupation measures.

Once the Markov chain of endpoint distributions is embedded in a compact space, a few key ideas can be wielded to great effect:
\begin{itemize}
\item[(i)] Consider the ``update map" $\t$ which receives a starting position for the chain and outputs the law after one step.
Provided $\t$ is continuous, the chain's empirical measure will converge to stationarity.
\item[(ii)] A convexity argument shows that the starting configuration of $P(\omega_0 = 0) = 1$ is optimal in yielding the minimal expected free energy.
\item[(iii)] The two previous points imply that the chain's empirical measure must converge to \textit{energy-minimizing} stationarity.
This fact provides a variational formula \eqref{variational_formula} for the limiting free energy.
Furthermore, by examining what properties an energy-minimizing stationary distribution must have, we can deduce certain asymptotic properties of the chain.
In particular, Theorems \ref{apa_new} and \ref{geometric_new} will follow, as described in Sections \ref{varitional_formula_section} and \ref{proof_main_results}.
\end{itemize}
Two of the most challenging steps are (a) constructing the compact space; and (b) proving continuity of $\t$.
Although the remaining work to prove Theorems \ref{apa_new} and \ref{geometric_new} can largely be taken from \cite{bates-chatterjee17}, these two parts must undergo non-trivial generalizations.
This is done in Section \ref{adaptation_update_map}.

First, the construction in \cite{bates-chatterjee17} of the compact metric space $(\s,d_*)$ depended on the assumption $\lambda(\pm2\beta) < \infty$.
Now working under the more natural hypothesis \eqref{mgf_assumption}, we must define a one-parameter family of metrics $(d_\alpha)_{\alpha > 1}$, whereby $\alpha$ can be chosen so that $\alpha\beta < \beta_{\max}$.
For each $\alpha > 1$, it must be checked that $d_\alpha$ is a metric, and that it induces a compact topology.

Second, the definition of $\t$ depends very explicitly on $P$, the law of the reference walk.
When this walk is SRW, showing continuity of $\t$ amounts to proving that local interactions between endpoints are modified continuously with the addition of another monomer.
When $P$ is general, however, there can be interactions of endpoints arbitrarily far apart.
Showing that these interactions do not spoil continuity complicates the proof of Proposition \ref{continuity}.
For the same reason, technical details become more difficult in the proof of Proposition \ref{phase_transition}, which is the generalization of Theorem \ref{phase_transition_background}.

While this paper is focused on polymers in $\Z^d$, the above program could be carried out on other countable, locally finite Cayley graphs.
In this setting, the translation action on $\Z^d$ is generalized by the group's natural action on itself.
One case of interest is the infinite binary tree, considered as a subset of the canonical Cayley graph for the free group on two generators.
An intriguing observation is that while analogs of Theorems \ref{apa_new} and \ref{geometric_new} should go through, the ``favorite region conjecture" mentioned in Section \ref{results_this_paper} will not.
Indeed, when the binary tree having $2^n$ leaves is identified with $[0,1]$ having $2^n$ dyadic subintervals, the limiting endpoint distribution of a low-temperature tree polymer converges in law to a purely atomic measure \cite{barral-rhodes-vargas12}, in analogy with Theorem \ref{apa_new}.
While this measure sometimes confines almost all of its mass to a very narrow interval---in analogy with Theorem \ref{geometric_new}---the low probability of this event prevents a stronger localization result.
For more on the comparison of tree polymers versus lattice polymers, see \cite[Chapter 4]{comets17}.

\subsection{Related aspects of short- and long-range polymers} \label{related_aspects}

\subsubsection{Variational formulas for free energy} \label{intro_variational_formulas}
As in other statistical mechanical models, there is great interest in computing the limiting free energy $p(\beta)$ from Theorem \ref{convergence_background}.
For $P$ having finite support (i.e.~$P(y,x) > 0$ for only finitely many $x$), a series of papers due to Georgiou, Rassoul-Agha, Sepp\"al\"ainen, and Yilmaz \cite{rassoul-seppalainen-yilmaz13,rassoul-seppalainen-yilmaz17,georgiou-rassoul-seppalainen16} provides two types of variational formulas for $p(\beta)$, one using cocycles (additive functions on $\N \times \Z^d$) and another of a Gibbs variational form (optimizing the balance of energy versus entropy).
A third type of variational formula appeared in \cite{bates-chatterjee17} for the SRW case, and is extended to the general case in this paper as \eqref{variational_formula}.
This formula arises naturally as an optimization over a functional order parameter---in this case, the law of the endpoint distribution---following a variant of the cavity method used to study spin glass systems, in particular the Sherrington-Kirkpatrick model \cite{talagrand11I,talagrand11II,panchenko13}.
As such, the variational formula \eqref{variational_formula} can be considered in analogy with the  Parisi formula \cite{talagrand06}, which has recently been the object of intense study (e.g.~see \cite{panchenko05,panchenko08,panchenko13II,chen13,panchenko14,auffinger-chen15I,auffinger-chen15II,auffinger-chen16,jagannath-tobasco16,auffinger-chen17,chen17}).
For further comparison, see \cite[Section 1.4.3]{bates-chatterjee17}.

\subsubsection{Martingale phase transition and free energy asymptotics}
It is not difficult to check that the normalized partition function $W_n \coloneqq Z_n/\EE(Z_n)$ forms a positive martingale with respect to $\f_n$.
Therefore, the martingale convergence theorem guarantees the existence of a random variable $W_\infty \geq 0$ such that $W_n \to W_\infty$ almost surely, while Kolmogorov's $0$-$1$ law implies $\PP(W_\infty > 0) \in \{0,1\}$.
It is known in the SRW case \cite[Theorem 3.2(a)]{comets-yoshida06} and in the long-range case (stated but not proved in \cite[Theorem 1.4]{wei16}) that there is a phase transition.
That is, there exists $\bar\beta_c \in [0,\infty]$ such that 
\eq{
\beta < \bar\beta_c \quad &\Rightarrow \quad \PP(W_\infty > 0) = 1 \qquad \text{(``weak disorder")} \\
\beta > \bar\beta_c \quad &\Rightarrow \quad \PP(W_\infty = 0) = 1 \qquad \text{(``strong disorder").}
}
Since $W_n \to 0$ exponentially quickly when $\beta > \beta_c$, it is clear that $\bar\beta_c \leq \beta_c$, although it is conjectured that $\bar\beta_c = \beta_c$ in general.\footnote{This is well-known for analogous models on $b$-ary trees.
A nice exposition can be found in \cite[Chapter 4]{comets17}.}
This is known only in exceptional (but highly non-trivial) cases when $\bar \beta_c = 0$.
For the SRW case, it was proved that $\beta_c = 0$ by Comets and Vargas for $d = 1$ \cite[Theorem 1.1]{comets-vargas06} and by Lacoin for $d = 2$ \cite[Theorem 1.6]{lacoin10}.
For the long-range $\alpha$-stable case with $\alpha \in (1,2]$ and $d = 1$, Wei showed the analogous result: $\beta_c = \bar \beta_c = 0$ \cite[Theorem 1.11(ii)]{wei16}.\footnote{On the other hand, for $d \geq 3$ and any truly $d$-dimensional random walk $P$, the transience of $P$ forces $\bar\beta_c > 0$.
Also when $d = 1$ and $\alpha \in (0,1)$, or $d = 2$ and $\alpha \in (0,2)$, transience implies $\bar\beta_c > 0$, see \cite[Theorem 4.1]{comets07}.}
Recently, Wei showed \cite[Theorem 1.3]{wei18} that in the critical case $\alpha = d = 1$ and under some regularity assumptions on $P$, $\bar\beta_c = 0$ again implies $\beta_c = 0$.

The results just mentioned from \cite{lacoin10,wei16,wei18} are in fact corollaries of asymptotics obtained for $p(\beta)$ as $\beta \searrow 0$.
In the SRW case, the bounds in \cite{lacoin10} have been subsequently sharpened \cite{watbled12,nakashima14,alexander-yildirim15,berger-lacoin17}.
For $d=1$, the precise exponent seen in these results is related to the scaling of $\beta_n \searrow 0$ that generates the intermediate disorder regime in which a rescaled lattice polymer converges to the continuum random polymer \cite{alberts-khanin-quastel14I}.
This method of identifying a KPZ regime was initiated in \cite{alberts-khanin-quastel14II} and extended to certain long-range cases in \cite{caravenna-sun-zygouras17I,caravenna-sun-zygouras17II}.
In fact, the authors of \cite{caravenna-sun-zygouras17II} are able to identify a universal limit for the point-to-point log partition functions, in \textit{critical} cases, for both $d= 1$ and $d=2$.
Related work on the stochastic heat equation has been done for $d \geq 3$ \cite{mukherjee-shamov-zeitouni16}.
Finally, in \cite{comets-fukushima-nakajima-yoshida15} the asymptotics of $p(\beta)$ as $\beta\to\pm\infty$ are derived when $P$ has a stretched exponential tail, and the environment consists of Bernoulli random variables.

\subsubsection{Continuous versions of long-range polymers}
In \cite{miura-tawara-tsuchida08}, a continuous version of $\alpha$-stable long-range polymers is considered.
Specifically, a phase transition was shown for the normalized partition function associated to a L\'evy process subjected to a Poissonian random environment. 
Sufficient conditions were given for either side of the transition.
In the same way that the $\alpha$-stable polymer introduced in \cite{comets07} generalized classical lattice polymers, this L\'evy process model generalized a Brownian motion in Poissonian environment, which was introduced in \cite{comets-yoshida05} and also considered in \cite{lacoin11,comets-yoshida13}.

\section{Free energy and phase transition} \label{free_energy_background}
In order to give context for the main results, which concern the behavior of polymer measures above and below a phase transition, we must first check that such a phase transition exists!
To do so, we need to prove Theorems \ref{convergence_background} and \ref{phase_transition_background} in the general setting.
First, in Section \ref{convergence_of_free_energy} we show that the quenched free energy has a deterministic limit. 
The arguments used here are standard, and the expert reader may skip them; nevertheless, the details are included to verify that no essential facts are lost when working with an arbitrary reference walk.
Next, a proof of the phase transition is given in Section \ref{existence_of_phase_transition}.
In particular, the methods initiated in \cite{comets-yoshida06} must be refined to account for general $P$ and weaker assumptions on the logarithmic moment generating function $\lambda(\beta)$.

\subsection{Convergence of free energy} \label{convergence_of_free_energy}
We begin by showing the existence of a limiting free energy.
Throughout this section one may assume a condition just slightly weaker than \eqref{mgf_assumption}, namely
\eeq{ \label{weaker_mgf_assumption}
\lambda(\pm\beta) < \infty.
}

\begin{prop} \label{free_energy_converges}
Assume \eqref{weaker_mgf_assumption}.
Then the limiting free energy exists and is deterministic:
\eq{
\lim_{n \to \infty} F_n = p(\beta) \quad \mathrm{a.s.} \text{ and in } L^p,\, p \in [1,\infty).
}
\end{prop}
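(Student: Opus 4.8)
The plan is to follow the standard two‑step route: first identify a deterministic limit for the \emph{expected} free energies by a superadditivity argument, then show that $F_n$ concentrates around its mean. A general reference walk causes essentially no difficulty in the first step; under the weak hypothesis \eqref{weaker_mgf_assumption} the real work is in the concentration. \textbf{Step 1 (superadditivity and the limit).} Writing $Z_n(x):=E(e^{-\beta H_n(\omega)}\one_{\{\omega_n=x\}})$, the Markov property of the reference walk together with the independence of $(\eta(i,\cdot))_{i>n}$ from $\f_n$ gives the factorization $Z_{n+m}=\sum_{x\in\Z^d}Z_n(x)\,\wh Z_m^{\,x}$, where, conditionally on $\f_n$, each $\wh Z_m^{\,x}$ has the law of $Z_m$ (by the translation invariance built into \eqref{walk_assumption} and the i.i.d.\ environment). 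Factoring out $Z_n$ and applying concavity of $\log$ to the probability vector $(\rho_n(\omega_n=x))_x$,
\eq{
\EE\bigl(\log Z_{n+m}\bigm|\f_n\bigr)\ \geq\ \log Z_n+\sum_{x}\rho_n(\omega_n=x)\,\EE(\log Z_m)\ =\ \log Z_n+\EE(\log Z_m),
}
so $a_n:=\EE(\log Z_n)$ is superadditive. Since $a_n\leq\log\EE(Z_n)=n\lambda(\beta)$ by Jensen and $a_1\geq\log P(0,x_0)+\beta\,\EE(\eta)>-\infty$ for any fixed $x_0$ with $P(0,x_0)>0$ (here $\EE|\eta|<\infty$ because $\lambda(\pm\beta)<\infty$), Fekete's lemma yields $\EE(F_n)\to p(\beta):=\sup_{n}a_n/n\in(-\infty,\lambda(\beta)]$.

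\textbf{Step 2 (uniform moment bounds, giving $L^p$ for free).} The upper tail is immediate: $\EE(Z_n)=e^{n\lambda(\beta)}$ and Markov's inequality give $\PP(F_n>\lambda(\beta)+t)\leq e^{-nt}$. For the lower tail, bounding $Z_n$ from below by the single path $\omega_i\equiv ix_0$ gives $\log Z_n\geq n\log P(0,x_0)+\beta\sum_{i=1}^n\eta(i,ix_0)$, a sum of $n$ i.i.d.\ copies of $\eta$ whose negative part has all moments finite. Hence $\sup_n\EE(|F_n|^q)<\infty$ for every $q<\infty$, so $\{|F_n|^p\}_n$ is uniformly integrable for each $p\in[1,\infty)$; consequently, once almost sure convergence is established, $L^p$ convergence follows by Vitali's theorem together with $\EE(F_n)\to p(\beta)$.

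\textbf{Step 3 (almost sure concentration).} Decompose $\log Z_n-\EE(\log Z_n)=\sum_{k=1}^n D_k$ with $D_k:=\EE(\log Z_n\mid\f_k)-\EE(\log Z_n\mid\f_{k-1})$, a martingale‑difference sequence. Replacing the level-$k$ environment by an independent copy $\wt\eta(k,\cdot)$ and denoting the resulting partition function $Z_n^{(k)}$, one has $D_k=\EE(\log Z_n-\log Z_n^{(k)}\mid\f_k)$. Since $\log Z_n$ depends on $\eta(k,\cdot)$ only through a factor $\sum_x A_x e^{\beta\eta(k,x)}$ with $A_x\geq0$ independent of $\eta(k,\cdot)$, and $\partial_{\eta(k,x)}\log Z_n=\beta\,\rho_n(\omega_k=x)$, the increment $\log Z_n-\log Z_n^{(k)}$ is $\beta$ times a (tilted‑)polymer average of $\eta(k,\cdot)-\wt\eta(k,\cdot)$. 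Estimating the conditional exponential moments of $D_k$ from this and feeding them into a Bernstein/Azuma‑type martingale inequality should give $\PP(|F_n-\EE(F_n)|>\eps)\leq C_\eps e^{-c_\eps n}$ (or at worst a stretched exponential), whence Borel--Cantelli gives $F_n-\EE(F_n)\to0$ a.s., and with Step 1, $F_n\to p(\beta)$ a.s.

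\textbf{Main obstacle.} Steps 1 and 2 are routine and insensitive to the choice of $P$. The hard part is Step 3 under \eqref{weaker_mgf_assumption}, which only grants $\lambda(\pm\beta)<\infty$: the polymer average of $|\eta(k,\cdot)-\wt\eta(k,\cdot)|$ controlling $D_k$ may place nearly all of its weight on the site where the environment is largest, so the crude bound $|D_k|\leq\beta\,\esssup_x|\eta(k,x)-\wt\eta(k,x)|$ is worthless---the supremum over $x\in\Z^d$ of i.i.d.\ variables is a.s.\ infinite, and for a general $P$ of unbounded support one cannot even restrict to a fixed finite window. Arranging that the martingale increments have \emph{uniformly} controlled exponential moments will therefore require a more careful argument, presumably a truncation of the environment together with a separate union‑bound estimate on the truncated mass, using $\lambda(\pm\beta)<\infty$; this is the one genuinely technical ingredient (essentially the content of Vargas's refinement of the convergence theorem), and I expect it to be where most of the care is needed.
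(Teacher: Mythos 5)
Your Steps 1 and 2 are correct and essentially coincide with the paper's treatment: the paper proves convergence of $\EE(F_n)$ (Lemma \ref{means_converge}) by the same superadditivity-plus-Fekete argument (its lower bound $\EE(\log Z_n)\ge\beta\EE(\eta)$ via Jensen plays the role of your single-path bound, an immaterial difference), and it obtains $L^p$ convergence from the concentration estimate rather than from uniform integrability, again a minor variation. The problem is Step 3, which you yourself flag: the almost sure concentration of $F_n$ about its mean is only asserted (``should give''), and this is the one genuinely non-trivial ingredient of the proposition under \eqref{weaker_mgf_assumption}. As written, the proposal has a gap exactly at the load-bearing step.

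Moreover, the repair you sketch --- truncating the environment and union-bounding the truncated mass --- is not how this difficulty is resolved, and it is unclear it can work for a general $P$, since the level-$k$ endpoint ranges over all of $\Z^d$. The standard resolution, which the paper imports as Lemma \ref{concentration} from Liu and Watbled (noting only that its proof needs no modification for general $P$), requires no truncation at all: with $D_k=\EE(\log Z_n\mid\f_k)-\EE(\log Z_n\mid\f_{k-1})$, write $Z_n=Z_n^{(k)}\,\mu^{(k)}\bigl(e^{\beta\eta(k,\omega_k)}\bigr)$, where $Z_n^{(k)}$ omits the level-$k$ energy and $\mu^{(k)}$ is the corresponding polymer measure; since $\log Z_n^{(k)}$ is independent of the level-$k$ variables, $D_k$ is a difference of conditional expectations of $\log\mu^{(k)}\bigl(e^{\beta\eta(k,\omega_k)}\bigr)$, and applying Jensen's inequality in both directions to this logarithm of a polymer average (never a pointwise bound over sites) gives $\EE\bigl(e^{|D_k|}\mid\f_{k-1}\bigr)\le K:=2e^{\lambda(\beta)+\lambda(-\beta)}$, which is finite precisely under \eqref{weaker_mgf_assumption}. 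Feeding this conditional exponential-moment bound into the Liu--Watbled martingale inequality yields $\PP(|F_n-\EE(F_n)|>x)\le 2e^{-ncx^2}$ for $x\in[0,1]$ and $2e^{-ncx}$ for $x>1$, hence a.s. and $L^p$ convergence. So the missing idea is the conditional exponential-moment control of the increments via the $\log$-of-average structure; either supply that argument or cite the Liu--Watbled inequality, as the paper does.
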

The proof follows the usual program of showing first that $\EE(F_n)$ converges, and second that $F_n$ concentrates around its mean.

\begin{lemma} \label{means_converge}
Assume \eqref{weaker_mgf_assumption}.
Then
\eeq{
\lim_{n \to \infty} \EE(F_n) = \sup_{n \geq 0} \EE(F_n) \eqqcolon p(\beta) \leq \lambda(\beta). \label{p_def}
}
\end{lemma}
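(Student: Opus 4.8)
The plan is to prove that $n \mapsto \EE(\log Z_n)$ is superadditive, apply Fekete's lemma to conclude $\lim_n \EE(F_n) = \sup_n \EE(F_n)$, and obtain the bound $p(\beta) \le \lambda(\beta)$ from the annealed comparison $\EE(Z_n) = e^{n\lambda(\beta)}$ together with Jensen's inequality.

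For superadditivity I would split the partition function at time $n$. Writing $Z_n(x)$ for the point-to-point partition function, i.e., the contribution to $Z_n$ of paths with $\omega_n = x$, the Markov property of the reference walk and the additive structure of $H_{n+m}$ give
\[
Z_{n+m} = \sum_{x \in \Z^d} Z_n(x)\, \wt Z_m^{(n)}(x),
\]
where $\wt Z_m^{(n)}(x)$ is the partition function of the length-$m$ polymer started from $x$ at time $n$, built from the environment variables $(\eta(i,\cdot))_{n < i \le n+m}$. Because $P$ is homogeneous---so the walk started at $x$ is a spatial translate of the one started at $0$---and the environment is i.i.d., $\wt Z_m^{(n)}(x)$ has the same law as $Z_m$ for every $x$, and the collection $(\wt Z_m^{(n)}(x))_x$ is independent of $\f_n$. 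Since $x \mapsto Z_n(x)/Z_n$ is a bona fide probability mass function for each realization of the environment, concavity of the logarithm yields
\[
\log Z_{n+m} - \log Z_n \ \ge\ \sum_{x} \frac{Z_n(x)}{Z_n}\, \log \wt Z_m^{(n)}(x),
\]
and taking $\EE$---using that $Z_n(x)/Z_n$ is $\f_n$-measurable while $\log \wt Z_m^{(n)}(x)$ is independent of $\f_n$ with mean $\EE\log Z_m$, and Fubini--Tonelli on the positive and negative parts separately---gives $\EE\log Z_{n+m} \ge \EE\log Z_n + \EE\log Z_m$. Fekete's lemma then delivers the first equality in \eqref{p_def}.

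To make these steps rigorous one needs $\EE|\log Z_m| < \infty$, and this is the only place the hypothesis $\lambda(\pm\beta) < \infty$ is used. The positive part is controlled by $\log^+ Z_m \le Z_m$, so $\EE \log^+ Z_m \le \EE Z_m = e^{m\lambda(\beta)} < \infty$. For the negative part one cannot simply restrict to the path that remains at the origin, since \eqref{walk_assumption} does not guarantee $P(0,0) > 0$; instead I would fix any $y$ with $q(y) := P(\omega_1 = y) > 0$ and keep only the single ray $\omega_i = iy$, obtaining $\log Z_m \ge m\log q(y) + \beta\sum_{i=1}^m \eta(i,iy)$. Since $\lambda(-\beta) < \infty$ forces $\EE \eta^- < \infty$, the right-hand side---and hence $\log Z_m$---has integrable negative part. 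Finally $\EE(F_n) = \tfrac{1}{n}\EE\log Z_n \le \tfrac1n\log\EE Z_n = \lambda(\beta)$ for every $n$, which gives $p(\beta) \le \lambda(\beta)$.

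This is a classical superadditivity argument, so no single step is a genuine obstacle; the two points that require attention under the present, more general hypotheses are (i) checking that mere homogeneity of $P$ suffices for the distributional identity $\wt Z_m^{(n)}(x) \overset{d}{=} Z_m$, and (ii) extracting integrability of $\log Z_m$ from $\lambda(\pm\beta) < \infty$ rather than from finiteness of $\lambda$ on all of $\R$---which is exactly why the lower bound is taken along a positive-probability ray instead of a constant path.
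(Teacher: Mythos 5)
Your proposal is correct and follows essentially the same route as the paper: the same decomposition $Z_{n+m}=\sum_x Z_n(x)\cdot(\text{shifted }Z_m)$, concavity of $\log$ with the weights $Z_n(x)/Z_n$, independence of the shifted partition function from $\f_n$, Fekete's lemma, and the annealed bound $\EE\log Z_n\le\log\EE Z_n=n\lambda(\beta)$ for $p(\beta)\le\lambda(\beta)$. The only (equally valid) cosmetic difference is the integrability of the negative part of $\log Z_n$: you bound below along a single positive-probability ray $\omega_i=iy$, whereas the paper applies Jensen with respect to $P$ (to $t\mapsto e^t$) to get $\EE\log Z_n\ge \beta\, n\,\EE(\eta)>-\infty$, using $\EE|\eta|<\infty$ which follows from $\lambda(\pm\beta)<\infty$.
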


\begin{lemma}[{cf.~\cite[Theorem 1.4]{liu-watbled09}}] \label{concentration}
Assume \eqref{weaker_mgf_assumption}.
Then
\eq{ 
\PP(|F_n - \EE(F_n)| > x) \leq \begin{cases}
2e^{-ncx^2} &\mathrm{if~} x \in [0,1] \\
2e^{-ncx} &\mathrm{if~} x > 1, \end{cases}
}
where $c > 0$ is a constant depending only on the value of $K \coloneqq 2e^{\lambda(\beta)+\lambda(-\beta)}$.
In particular,
\eq{ 
\lim_{n \to \infty} |F_n - \EE(F_n)| = 0  \quad \mathrm{a.s.~and~in~} L^p,\, p \in [1,\infty).
}
\end{lemma}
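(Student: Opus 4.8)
The plan is to follow the martingale route of Liu and Watbled \cite{liu-watbled09}, but arranged so that only the single-site moments $\lambda(\pm\beta)$ are used, matching the weaker hypothesis \eqref{weaker_mgf_assumption}. Fix $n$ and write
\[
\log Z_n - \EE(\log Z_n) = \sum_{k=1}^n \Delta_k, \qquad \Delta_k := \EE[\log Z_n\mid\f_k] - \EE[\log Z_n\mid\f_{k-1}],
\]
a martingale-difference sequence with respect to $(\f_k)_{0\le k\le n}$, so in particular $\EE[\Delta_k\mid\f_{k-1}]=0$. Everything reduces to a uniform bound on the conditional exponential moments of the $\Delta_k$.

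First I would establish that bound by re-randomizing one layer of the environment. Let $\wt\eta(k,\cdot)$ be an independent copy of $\eta(k,\cdot)$, let $\wt Z_n$ be the partition function obtained from $Z_n$ upon replacing $\eta(k,\cdot)$ by $\wt\eta(k,\cdot)$, and put $D_k := \log Z_n - \log\wt Z_n$. Since $\eta(k,\cdot)$ does not appear in $\wt Z_n$, one has $\EE[\log\wt Z_n\mid\f_k] = \EE[\log\wt Z_n\mid\f_{k-1}] = \EE[\log Z_n\mid\f_{k-1}]$, hence $\Delta_k = \EE[D_k\mid\f_k]$. Pulling the layers $j\ne k$ out of both partition functions yields $D_k = \log A - \log\wt A$ with $A = E_{\rho^{(k)}}[e^{\beta\eta(k,\omega_k)}]$ and $\wt A = E_{\rho^{(k)}}[e^{\beta\wt\eta(k,\omega_k)}]$, where $\rho^{(k)}$ is the polymer measure built from the environment at the layers $j\ne k$; in particular $\rho^{(k)}$ is independent of $\eta(k,\cdot)$ and $\wt\eta(k,\cdot)$, and, conditionally on the layers $j\ne k$, $A$ and $\wt A$ are i.i.d. For any $\theta\in(0,1]$, concavity of $x\mapsto x^\theta$ gives $\EE[A^\theta\mid(\eta(j,\cdot))_{j\ne k}]\le\big(\EE[A\mid(\eta(j,\cdot))_{j\ne k}]\big)^\theta = e^{\theta\lambda(\beta)}$, while convexity of $x\mapsto x^{-\theta}$ applied to the weighted average defining $\wt A$ gives $\wt A^{-\theta}\le E_{\rho^{(k)}}[e^{-\theta\beta\wt\eta(k,\omega_k)}]$ and hence $\EE[\wt A^{-\theta}\mid(\eta(j,\cdot))_{j\ne k}]\le e^{\lambda(-\theta\beta)}$; multiplying and invoking the convexity of $\lambda$ with $\lambda(0)=0$,
\[
\EE[e^{\theta D_k}\mid\f_{k-1}] \le e^{\theta\lambda(\beta) + \lambda(-\theta\beta)} \le e^{\theta(\lambda(\beta)+\lambda(-\beta))}.
\]
The same computation with the roles of $\eta(k,\cdot)$ and $\wt\eta(k,\cdot)$ swapped bounds $\EE[e^{-\theta D_k}\mid\f_{k-1}]$ by the same quantity, and conditional Jensen then upgrades these to $\EE[e^{\pm\theta\Delta_k}\mid\f_{k-1}]\le e^{\theta(\lambda(\beta)+\lambda(-\beta))}$ for every $\theta\in(0,1]$. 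Crucially this uses only \eqref{weaker_mgf_assumption}, since $|\theta\beta|\le\beta$ and $\lambda$ is convex.

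The rest is routine bookkeeping. Setting $C := \lambda(\beta)+\lambda(-\beta)$ and taking $\theta=\tfrac12$ gives $\EE[e^{|\Delta_k|/2}\mid\f_{k-1}]\le 2e^{C/2}$, whence $\EE[|\Delta_k|^m\mid\f_{k-1}]\le 2^{m+1}m!\,e^{C/2}$ for all $m\ge1$; summing the exponential series (the $m=1$ term dropping out) produces a sub-Gaussian estimate $\EE[e^{s\Delta_k}\mid\f_{k-1}]\le e^{c_1 s^2}$ valid for $|s|\le\tfrac14$, where $c_1$ depends only on $C$, i.e.\ only on $K = 2e^C$. Multiplying these bounds successively over $k=1,\dots,n$ gives $\EE[e^{s(\log Z_n-\EE\log Z_n)}]\le e^{c_1 n s^2}$ for $|s|\le\tfrac14$; since $F_n-\EE(F_n) = \tfrac1n(\log Z_n-\EE\log Z_n)$, applying Markov's inequality to $e^{\pm s(\log Z_n-\EE\log Z_n)}$ and optimizing the exponent over $s\in(0,\tfrac14]$ yields precisely the stated two-regime estimate (the constraint $s\le\tfrac14$ becomes binding only once $x$ exceeds a threshold of order $c_1$, which after shrinking $c$ can be normalized to $1$), with the factor $2$ coming from combining the upper and lower tails. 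Finally, $\sum_n 2e^{-ncx^2}<\infty$ for each fixed $x>0$, so Borel--Cantelli gives $F_n-\EE(F_n)\to0$ almost surely; integrating the tail bound against $p\,x^{p-1}\,\dd x$ and using dominated convergence gives convergence to $0$ in $L^p$ for $p\in[1,\infty)$.

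The main obstacle is the conditional exponential-moment bound on $\Delta_k$: it has to be squeezed out of the single-site moments $\lambda(\pm\beta)$ alone. The device that makes this work is the re-randomization identity $\Delta_k=\EE[D_k\mid\f_k]$ together with the two Jensen steps, which replace the global quantity $D_k$ by moments of $e^{\pm\theta\beta\eta}$ computed over a single environment layer. Once that estimate is secured, nothing downstream uses any further structure of the polymer model.
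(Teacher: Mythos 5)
Your argument is correct, and it is essentially the approach the paper relies on: the paper does not reproduce a proof but defers to Liu--Watbled's martingale-difference method, of which your write-up is a faithful self-contained rendition (the re-randomization identity $\Delta_k=\EE[D_k\mid\f_k]$ plus the two Jensen steps is exactly what produces their hypothesis $\EE[e^{|\Delta_k|}\mid\f_{k-1}]\leq K=2e^{\lambda(\beta)+\lambda(-\beta)}$, matching the constant in the statement, and none of it uses the structure of $P$). Your $\theta$-interpolation remark correctly confirms the paper's claim that only $\lambda(\pm\beta)<\infty$ and a general reference walk are needed.
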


Proposition \ref{free_energy_converges} is immediate given Lemmas \ref{means_converge} and \ref{concentration}, and the proof from \cite{liu-watbled09} of Lemma \ref{concentration} requires no modification for general $P$. 
So we will just prove Lemma \ref{means_converge}.

\begin{proof}[Proof of Lemma \ref{means_converge}]
The equality in \eqref{p_def} follows from Fekete's lemma, once we show that the sequence $(\EE \log Z_n)_{n\geq0}$ is superadditive.
Indeed, we will simply generalize the argument seen, for instance, in \cite[p.~440]{carmona-hu02}.
But first we check that $\EE(\log Z_n)$ is finite for each $n$.
To obtain an upper bound, we use Jensen's inequality applied to $t \mapsto \log t$.
This just yields the annealed bound,
\eq{
\EE(\log Z_n) \leq \log \EE(Z_n) = \log \EE\bigg[\sum_{x_1,\dots,x_n} \exp\bigg(\beta\sum_{i=1}^n\eta(i,x_i)\bigg)\prod_{i=1}^nP(x_{i-1},x_i)\bigg].
}
The nonnegativity of all summands allows us, by Tonelli's theorem, to pass the expectation through the sum.
That is,
\eq{
\EE(Z_n) = \sum_{x_1,\dots,x_n} \EE\exp\bigg(\beta\sum_{i=1}^n\eta(i,x_i)\bigg)\prod_{i=1}^nP(x_{i-1},x_i)
= \sum_{x_1,\dots,x_n} e^{n\lambda(\beta)}\prod_{i=1}^nP(x_{i-1},x_i) = e^{n\lambda(\beta)},
}
and so
\eq{
\EE(\log Z_n) \leq n\lambda(\beta) < \infty.
}
In particular, the inequality in \eqref{p_def} follows from the above display.

On the other hand, a lower bound is found by again using Jensen's inequality, but applied to $t \mapsto e^t$ and with respect to $P$:
\eq{
\EE(\log Z_n) &= \EE\bigg[\log \sum_{x_1,\dots,x_n} \exp\bigg(\beta\sum_{i=1}^n\eta(i,x_i)\bigg)\prod_{i=1}^nP(x_{i-1},x_i)\bigg] \\
&\geq \EE\bigg[\sum_{x_1,\dots,x_n} \beta \sum_{i = 1}^n \eta(i,x_i) \prod_{i = 1}^n P(x_{i-1},x_i)\bigg]
= \beta\EE(\eta),
}
where the final equality is a consequence of Fubini's theorem.
Indeed, we may apply Fubini's theorem because
\eq{
\EE|\eta| 
= \int_0^\infty \PP(|\eta|\geq t)\ \dd t
\leq \int_0^\infty e^{-\beta t} \EE(e^{\beta|\eta|})\ \dd t
< \infty.
}
Therefore, we obtain the desired lower bound:
\eq{
\EE(\log Z_n) \geq \beta\EE(\eta) > -\infty.
}

Now we may prove superadditivity.
For a given integer $k \geq 0$ and $y \in \Z^d$, let $\theta_{k,y}$ be the associated time-space translation of the environment:
\eq{
(\theta_{k,y}\, \eta)(i,x) = \eta(i+k,x+y).
}
Because the collection $(\eta(i,x): i \geq 1, x\in\Z^d)$ is i.i.d., the random variables $Z_n$ and $Z_n \circ \theta_{k,y}$ have the same law.
Furthermore, for any $0 \leq k \leq n$, we have the identity
\eeq{
Z_{n} = \sum_{y \in \Z^d} Z_{k}(y)\cdot(Z_{n-k} \circ \theta_{k,y}), \label{Z_identity}
}
where $Z_k(y) \coloneqq E(e^{-\beta H_k(\omega)}; \omega_k = y)$ is the contribution to $Z_k$ coming from the endpoint $y$.
By Jensen's inequality,
\eq{
\log Z_{n} = \log \sum_{y \in \Z^d} Z_{k}(y)\cdot(Z_{n-k} \circ \theta_{k,y})
&= \log Z_{k} + \log \sum_{y \in \Z^d} \frac{Z_{k}(y)}{Z_{k}}\cdot (Z_{n-k} \circ \theta_{k,y}) \\
&\geq \log Z_{k} + \sum_{y \in \Z^d} \frac{Z_{k}(y)}{Z_{k}}\log(Z_{n-k} \circ \theta_{k,y}).
}
Notice that $Z_{n-k} \circ \theta_{k,y}$ depends only on the environment after time $k$, meaning it is independent of $\f_k$. 
We thus have
\eq{
\EE\givenk[\bigg]{\sum_{y \in \Z^d} \frac{Z_{k}(y)}{Z_{k}}\log(Z_{n-k} \circ \theta_{k,y})}{\f_k} 
&= \sum_{y \in \Z^d} \frac{Z_{k}(y)}{Z_{k}}\EE(\log Z_{n-k} \circ \theta_{k,y})
= \EE(\log Z_{n-k}).
}
Using this observation in the previous display, we arrive at the desired superadditive inequality:
\eq{
\EE(\log Z_{n}) \geq \EE(\log Z_{k}) + \EE(\log Z_{n-k}).
}
\end{proof}

\subsection{Existence of critical temperature} \label{existence_of_phase_transition}
Now we prove a phase transition between the high temperature and low temperature regimes.

\begin{prop} \label{phase_transition}
Assume
\eq{
\beta_{\max} \coloneqq \sup\{\beta \geq 0 : \lambda(\pm\beta) < \infty\} \in (0,\infty].
}
Then $\lambda - p \geq 0$ is non-decreasing on the interval $[0,\beta_{\max})$.
In particular, there exists a critical value $\beta_c = \beta_c(d,\mathfrak{L}_\eta,P) \in [0,\beta_{\max}]$ such that for every $\beta \in (0,\beta_{\max})$,
\begin{align}
0 \leq \beta \leq \beta_c \quad &\Rightarrow \quad p(\beta) = \lambda(\beta) \label{below_phase_transition}\\
\beta > \beta_c \quad &\Rightarrow \quad p(\beta) < \lambda(\beta). \label{above_phase_transition}
\end{align}
\end{prop}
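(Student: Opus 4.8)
I would deduce everything from two ingredients: the bound $\lambda - p \ge 0$, which is Lemma \ref{means_converge}, together with $p(0) = \lambda(0) = 0$ (at $\beta = 0$ one has $Z_n \equiv 1$); and the monotonicity of $\lambda - p$, which is the real content. Granting the monotonicity, the ``in particular'' clause is automatic: by nonnegativity and monotonicity the set $\{\beta \in [0,\beta_{\max}) : p(\beta) = \lambda(\beta)\}$ is an interval containing $0$, and since $p,\lambda$ are convex, hence continuous, on $(0,\beta_{\max})$, it has the form $[0,\beta_c]$ with $\beta_c \in [0,\beta_{\max}]$; then \eqref{below_phase_transition} holds by definition of this set, and \eqref{above_phase_transition} holds because $\lambda-p$, being non-decreasing and vanishing nowhere beyond $\beta_c$, is strictly positive there.

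To prove the monotonicity, note first that $\log Z_n$ is convex in $\beta$ (it is the logarithm of an integral of exponentials $e^{\beta c}$), hence so is each $\EE(F_n)$, hence so is $p = \sup_n \EE(F_n)$; likewise $\lambda$ is convex, and both are finite on $(0,\beta_{\max})$ by Lemma \ref{means_converge}. Convex functions are locally Lipschitz and differentiable off a countable set, so it suffices to show $p'(\beta) \le \lambda'(\beta)$ at every $\beta \in (0,\beta_{\max})$ where both derivatives exist; integrating then gives that $\lambda - p$ is non-decreasing on $(0,\beta_{\max})$, and its value at $0$ lies below this range by nonnegativity. To compute $p'$ I would use that pointwise limits of convex functions have converging derivatives at differentiability points of the limit, so $p'(\beta) = \lim_n (\EE F_n)'(\beta)$; differentiating under the expectations is legitimate because \eqref{mgf_assumption} supplies $\lambda(\pm(1+\eps)\beta) < \infty$ for some $\eps > 0$ and hence local uniform integrability of $Z_n$ and of $\partial_\beta \log Z_n$ --- this is precisely the place where \eqref{mgf_assumption} is needed in place of the weaker \eqref{weaker_mgf_assumption}. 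A direct computation gives
\[
(\EE F_n)'(\beta) = \frac{1}{n}\sum_{i=1}^n \EE\Big[\textstyle\sum_x \eta(i,x)\,\rho_n(\omega_i = x)\Big],
\]
and, factoring the level-$i$ environment out of the two ``halves'' of the polymer, the elementary identity
\[
\rho_n(\omega_i = x) = \frac{e^{\beta\eta(i,x)}\,\pi_i^{(n)}(x)}{\sum_y e^{\beta\eta(i,y)}\,\pi_i^{(n)}(y)},
\]
where $\pi_i^{(n)}$ is a random probability measure on $\Z^d$ --- assembled from the partition functions of the two segments and the transition kernel $P$ --- measurable with respect to the environment away from level $i$, hence independent of $(\eta(i,x))_x$. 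Therefore $(\EE F_n)'(\beta) = \tfrac1n\sum_{i=1}^n \EE[\,h_\beta(\pi_i^{(n)})\,]$, where for a probability measure $\mu$ on $\Z^d$ and an i.i.d.\ family $(\eta(x))_x$ independent of $\mu$,
\[
h_\beta(\mu) := \EE\!\left[\frac{\sum_x \eta(x)\,e^{\beta\eta(x)}\,\mu(x)}{\sum_y e^{\beta\eta(y)}\,\mu(y)}\right].
\]

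Everything now reduces to the single inequality $h_\beta(\mu) \le \lambda'(\beta)$ for every probability measure $\mu$ on $\Z^d$ --- equivalently $\Cov(D_\mu,\, N_\mu/D_\mu) \ge 0$ with $D_\mu = \sum_x e^{\beta\eta(x)}\mu(x)$ and $N_\mu = \sum_x \eta(x)e^{\beta\eta(x)}\mu(x)$, equivalently that $t \mapsto \lambda(t) - \EE\log\sum_x e^{t\eta(x)}\mu(x)$ is non-decreasing. Given it, $(\EE F_n)'(\beta) \le \lambda'(\beta)$, and letting $n\to\infty$ yields $p'(\beta) \le \lambda'(\beta)$. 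I would establish the inequality first for finitely supported $\mu$ by induction on $|\operatorname{supp}\mu|$: the base case $|\operatorname{supp}\mu| = 1$ gives $h_\beta(\mu) = \EE(\eta) = \lambda'(0) \le \lambda'(\beta)$ by convexity of $\lambda$, and the inductive step is a splitting estimate --- replacing one atom of $\mu$ by two atoms carrying independent copies of the environment can only increase $h_\beta$ --- whose iteration pushes $h_\beta(\mu)$ up to its supremum $\lambda'(\beta)$, since by the law of large numbers a maximally spread-out weighted average $N/D$ converges to $\EE(\eta e^{\beta\eta})/\EE(e^{\beta\eta}) = \lambda'(\beta)$; after conditioning on the remaining environment the splitting monotonicity is a two-variable correlation inequality. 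Arbitrary $\mu$ then follows by truncation and dominated convergence, with the dominating function supplied by \eqref{mgf_assumption}. I expect this splitting/correlation inequality to be the crux and the one genuinely new analytic input over the argument of \cite{comets-yoshida06}: when $P$ is simple random walk the prediction measures $\pi_i^{(n)}$ are automatically finitely supported, so only the finite-support case is ever needed, whereas for a general walk satisfying \eqref{walk_assumption} they may have full support, and it is exactly this infinite-support extension --- together with the integrability bookkeeping forced by \eqref{mgf_assumption} --- that Proposition \ref{phase_transition} must add.
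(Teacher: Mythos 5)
Your outer skeleton (convexity of $\EE(F_n)$ and hence of $p$, a.e.\ differentiability, reduction to $p'\leq\lambda'$, integration, and the deduction of $\beta_c$ from monotonicity of $\lambda-p$ together with $p(0)=\lambda(0)=0$) is exactly the paper's. The genuine gap is in the step you yourself call the crux: the inequality $h_\beta(\mu)\leq\lambda'(\beta)$ is never proved, and the mechanism you propose for it does not work. You want splitting an atom (giving the two pieces independent copies of the environment) to only increase $h_\beta$, and you claim that ``after conditioning on the remaining environment'' this is a two-variable correlation inequality. Conditionally on the other sites, with $A=\sum_{x\neq x_0}\eta(x)e^{\beta\eta(x)}\mu(x)$ and $B=\sum_{x\neq x_0}e^{\beta\eta(x)}\mu(x)$, the comparison is between $\EE\bigl[(A+m\eta e^{\beta\eta})/(B+me^{\beta\eta})\bigr]$ and its split analogue. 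If the environment is unbounded above (e.g.\ Gaussian, allowed under \eqref{mgf_assumption}) and $\mu$ carries a tiny mass at a site where the environment takes a huge value, then configurations with $A\gg B$ and $B$ of order one occur with positive probability; in that regime both quantities equal $A$ times $\EE\bigl[(B+\,\cdot\,)^{-1}\bigr]$ up to an $O(1)$ error, and since $t\mapsto(B+t)^{-1}$ is strictly convex, pointwise Jensen gives $\EE\bigl[(B+\tfrac m2(e^{\beta\eta_1}+e^{\beta\eta_2}))^{-1}\bigr]<\EE\bigl[(B+me^{\beta\eta})^{-1}\bigr]$, so for $A$ large the split value is \emph{strictly smaller}: splitting decreases the conditional functional, the opposite of what you need. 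So the conditional route is false, the unconditional splitting monotonicity is left unproven, and even granting it your LLN endgame still needs uniform integrability (the ratio is only dominated by $\max_i|\eta_i|$) and a weighted-LLN/Lindeberg argument to identify the limit as $\lambda'(\beta)$.

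The reduction itself is fine and is a genuinely different decomposition from the paper's: you differentiate time-slice by time-slice and factor the level-$i$ environment out of $\rho_n(\omega_i=\cdot)$, whereas the paper works path-wise, tilting $\PP$ by the product density $e^{-\beta H_n(\omega)-n\lambda(\beta)}$ and applying Harris--FKG to the non-decreasing $-H_n(\omega)$ and the non-increasing $Z_n^{-1}$ to get $\partial_\beta\EE(\log Z_n)\leq n\lambda'(\beta)$ directly. The missing inequality in your scheme is true, and the quickest repair is the paper's device transplanted to a single level: for fixed $x$, tilt only the law of $\eta(x)$ by $e^{\beta\eta(x)-\lambda(\beta)}$ (call the tilted expectation $\wt\EE$); with $D=\sum_y e^{\beta\eta(y)}\mu(y)$, Harris--FKG in the single variable $\eta(x)$ (conditionally on the others) gives
\eq{
\EE\Big[\frac{\eta(x)e^{\beta\eta(x)}}{D}\Big]
= e^{\lambda(\beta)}\,\wt\EE\Big[\frac{\eta(x)}{D}\Big]
\leq e^{\lambda(\beta)}\,\wt\EE[\eta(x)]\,\wt\EE\big[D^{-1}\big]
= \lambda'(\beta)\,\EE\Big[\frac{e^{\beta\eta(x)}}{D}\Big],
}
and summing against $\mu(x)$ yields $h_\beta(\mu)\leq\lambda'(\beta)$ in a few lines, for arbitrary (even infinitely supported) $\mu$. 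With that substitution your per-level argument becomes a legitimate variant of the paper's proof. Finally, note that the interchanges you dispatch with ``local uniform integrability'' --- derivative through $\EE$, derivative through $E$, and the $E$--$\EE$ exchange --- are precisely where the paper spends most of its effort (steps (a)--(c) of \eqref{would_like}, using the decomposition of $|x|e^{\beta x}$ and H\"older with $q\beta<\beta_{\max}$); under \eqref{mgf_assumption} alone and with $P$ of possibly infinite support these need real arguments rather than a one-line remark.
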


\begin{remark}
Before the proof, some comments are in order: \begin{itemize}
\item This phase transition was proved by Comets and Yoshida \cite[Theorem 3.2(b)]{comets-yoshida06} for the SRW case under the hypothesis $\beta_{\max} = \infty$, and we adopt their general proof strategy.
\item Two generalizations were claimed to follow easily from the same methods.
Vargas \cite[Lemma 3.4]{vargas07} suggests the hypothesis $\beta_{\max} = \infty$ can be dropped, while Comets \cite[Theorem 6.1]{comets07} allows $P$ to be $\alpha$-stable, $\alpha \in [0,2)$.
We do both by assuming only $\beta_{\max} > 0$ and allowing $P$ to be a general random walk.
It seems the resulting difficulties are only technical, but how to resolve them is not obvious, and so we provide a full proof.
\item If $\lim_{\beta\to\beta_{\max}}\beta\lambda'(\beta) - \lambda(\beta) = \infty$, then Theorem \ref{localization_sufficient_background} guarantees $\beta_c < \beta_{\max}$ whenever the entropy of $P(\omega_1 = \cdot)$ is finite.
\end{itemize}
\end{remark}

\begin{lemma} \label{decomposition}
For any fixed $\beta > 0$, there is a decomposition
\eq{
|x|e^{\beta x} = g(x) - h(x),
}
where $g : \R \to \R$ is non-decreasing, and $0 \leq h(x) \leq (\beta e)^{-1}$ for all $x \in \R$.
\end{lemma}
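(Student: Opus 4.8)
The plan is to construct $g$ and $h$ explicitly, after first pinning down where $\phi(x) := |x|e^{\beta x}$ fails to be monotone. A short calculus check does this: for $x \ge 0$ we have $\phi(x) = xe^{\beta x}$ with $\phi'(x) = e^{\beta x}(1+\beta x) > 0$, so $\phi$ is increasing on $[0,\infty)$; for $x \le 0$, the substitution $u = -x \ge 0$ turns $\phi$ into $u e^{-\beta u}$, which increases on $[0,1/\beta]$ and decreases on $[1/\beta,\infty)$. Hence $\phi$ is increasing on $(-\infty,-1/\beta]$, decreasing on $[-1/\beta,0]$, and increasing on $[0,\infty)$, and its restriction to $(-\infty,0]$ is bounded above by $\phi(-1/\beta) = (\beta e)^{-1}$.

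The only obstruction to monotonicity is therefore the dip of height exactly $(\beta e)^{-1}$ along $[-1/\beta,0]$, which I would cancel by setting
\[
h(x) := \begin{cases} 0, & x \le -1/\beta, \\ (\beta e)^{-1} - |x|e^{\beta x}, & -1/\beta \le x \le 0, \\ (\beta e)^{-1}, & x \ge 0, \end{cases}
\]
and then defining $g := \phi + h$, so that $g - h = |x|e^{\beta x}$ holds automatically. It remains to check two things. First, $0 \le h \le (\beta e)^{-1}$: this is immediate on the two outer pieces, and on $[-1/\beta,0]$ it follows from $0 \le \phi(x) \le \phi(-1/\beta) = (\beta e)^{-1}$, noted above. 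Second, $g$ is non-decreasing: on $(-\infty,-1/\beta]$ it equals $\phi$, which is increasing there; on $[-1/\beta,0]$ it is the constant $(\beta e)^{-1}$; and on $[0,\infty)$ it equals $xe^{\beta x} + (\beta e)^{-1}$, whose derivative is $e^{\beta x}(1+\beta x) > 0$. The three pieces agree at the junctions $x = -1/\beta$ and $x = 0$ (each equal to $(\beta e)^{-1}$), so $g$ is continuous and hence non-decreasing on all of $\R$.

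There is no genuine difficulty here: the entire content is the elementary fact that $ue^{-\beta u}$ is maximized at $u = 1/\beta$ with value $(\beta e)^{-1}$, which simultaneously fixes the interval on which $\phi$ decreases and supplies the uniform bound demanded of $h$. If one prefers, $g$ can be written in closed form as $|x|e^{\beta x}$ for $x \le -1/\beta$, as $(\beta e)^{-1}$ for $-1/\beta \le x \le 0$, and as $xe^{\beta x} + (\beta e)^{-1}$ for $x \ge 0$; the packaging $g = \phi + h$ is only a convenience that makes the identity $g - h = |x|e^{\beta x}$ transparent and keeps the verification short.
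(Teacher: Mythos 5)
Your proof is correct and takes essentially the same approach as the paper: both identify the dip of $\phi(x)=|x|e^{\beta x}$ between $-1/\beta$ and $0$ and remove it by letting $g$ plateau at the level $(\beta e)^{-1}=\phi(-1/\beta)$, which is exactly the bound needed for $h$. The only (harmless) difference is that the paper lets $g$ rejoin $\phi$ at the unique $x_0>0$ where $\phi$ climbs back to $(\beta e)^{-1}$, so its $h$ vanishes outside $[-1/\beta,x_0]$, whereas you lift $\phi$ by the constant $(\beta e)^{-1}$ on $[0,\infty)$, so your $h$ is identically $(\beta e)^{-1}$ there; both versions satisfy $0\le h\le(\beta e)^{-1}$ and yield a non-decreasing $g$.
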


\begin{proof}
Observe that $\vphi(x) \coloneqq |x|e^{\beta x}$ is an increasing function on $(-\infty,-\beta^{-1}]$ and on $[0,\infty)$, with
$\vphi(-\beta^{-1}) = (\beta e)^{-1}$ and $\vphi(x_0) = (\beta e)^{-1}$ for some (unique) $x_0 > 0$.
Therefore, we define
\eq{
g(x) \coloneqq \begin{cases} (\beta e)^{-1} &\text{if }x \in [-\beta^{-1},x_0] \\
|x|e^{\beta x} &\text{otherwise}, \end{cases} \qquad
h(x) \coloneqq g(x) - |x|e^{\beta x}.
}
Then $g$ is non-decreasing with $g \geq \vphi$, and so
$0 \leq h(x) \leq (\beta e)^{-1}$.
\end{proof}

\begin{proof}[Proof of Proposition \ref{phase_transition}]
Note that
$\log Z_n = E(e^{-\beta H_n(\omega)})$
is the (random) logarithmic moment generating function for $-H_n(\omega)$ with respect to $P$, and is finite for all $\beta \in [0,\beta_{\max})$ almost surely by the proof of Lemma \ref{means_converge}.
Therefore, $\EE(\log Z_n)$ is convex on $(0,\beta_{\max})$.
Now seen to be the limit of convex functions, $p$ must be convex on $(0,\beta_{\max})$.
It follows from general convex function theory that $p$ is differentiable almost everywhere on $(0,\beta_{\max})$, and
\eq{
p'(\beta) = \lim_{n \to \infty} \frac{\partial}{\partial\beta} \EE(F_n) \quad \text{whenever $p'(\beta)$ exists}.
}
Furthermore, convexity implies $p$ is absolutely continuous on any closed subinterval of $[0,\beta_{\max})$, and thus
\eq{
p(\beta_0) = \int_0^{\beta_0} p'(\beta)\ \dd\beta \quad \text{for all } \beta_0 \in (0,\beta_{\max}).
}
Suppose we can show
\eeq{ \label{derivative_ineq_to_show}
\frac{\partial}{\partial \beta}\EE(F_n) \leq \lambda'(\beta) \quad \text{for all $\beta \in (0,\beta_{\max})$.}
}
Then we could conclude that
\eq{
\lambda(\beta_0) - p(\beta_0) = \int_0^{\beta_0} [\lambda'(\beta)-p'(\beta)]\ \dd\beta
= \int_0^{\beta_0} \Big[\lambda'(\beta) - \lim_{n\to\infty} \frac{\partial}{\partial\beta}\EE(F_n)\Big]\ \dd\beta
}
is an increasing function of $\beta_0 \in [0,\beta_{\max})$, with
$p(0) = \lambda(0) = 0.$
In particular, the existence of $\beta_c$ will be proved.
Therefore, we need only to show \eqref{derivative_ineq_to_show}.
To do so, we would like to write
\eeq{ \label{would_like}
\frac{\partial}{\partial\beta} \EE(\log Z_n)
\stackrel{\text{(a)}}{=} \EE\Big[\frac{\partial}{\partial\beta}\log Z_n\Big]
= \EE\bigg[\frac{\frac{\partial}{\partial\beta}Z_n}{Z_n}\bigg] 
&\stackrel{\phantom{\text{(b)}}}{=} \EE\bigg[\frac{\frac{\partial}{\partial\beta}E(e^{-\beta H_n(\omega)})}{Z_n}\bigg] \\
&\stackrel{\text{(b)}}{=} \EE\bigg[E\bigg(\frac{-H_n(\omega)e^{-\beta H_n(\omega)}}{Z_n}\bigg)\bigg] \\
&\stackrel{\text{(c)}}{=} E\bigg[\EE\bigg(\frac{-H_n(\omega)e^{-\beta H_n(\omega)}}{Z_n}\bigg)\bigg], \\
}
but each of (a), (b), and (c) require justification.
Postponing these technical verifications for the moment, we complete the proof of \eqref{derivative_ineq_to_show} assuming \eqref{would_like}.

For a fixed $\omega \in \Omega_p$, we can write
\eq{
\EE\bigg(\frac{-H_n(\omega)e^{-\beta H_n(\omega)}}{Z_n}\bigg) = e^{n\lambda(\beta)}\wt\EE\bigg(\frac{-H_n(\omega)}{Z_n}\bigg),
}
where $\wt\EE$ denotes expectation with respect to the probability measure $\wt\PP$ given by
\eq{
\dd\wt\PP = \frac{e^{-\beta H_n(\omega)}}{e^{n\lambda(\beta)}}\ \dd\PP
= \frac{e^{-\beta H_n(\omega)}}{\EE(e^{-\beta H_n(\omega)})}\ \dd\PP.
}
Since the Radon-Nikodym derivative
\eq{
\frac{\dd \wt\PP}{\dd\PP} = e^{-\beta H_n(\omega)-n\lambda(\beta)} = \prod_{i = 1}^n e^{\beta \eta(i,\omega_i)-\lambda(\beta)}
}
is a product of independent quantities (with respect to $\PP$), the probability measure $\wt\PP$ remains a product measure.
Therefore, we can apply the Harris--FKG inequality (e.g.~see \cite[Theorem 2.4]{grimmett99}):
$-H_n(\omega)$ is non-decreasing in all $\eta(i,x)$, while $Z_n^{-1}$ is decreasing, which implies
\eq{
\wt\EE\bigg(\frac{-H_n(\omega)}{Z_n}\bigg) &\leq 
\wt\EE(-H_n(\omega))\wt\EE(Z_n^{-1}),
}
where
\eq{
\wt\EE(-H_n(\omega)) &= e^{-n\lambda(\beta)} \EE(-H_n(\omega)e^{-\beta H_n(\omega)}) \\
&= e^{-n\lambda(\beta)} \sum_{i = 1}^n \EE(\eta(i,\omega_i)e^{-\beta H_n(\omega)}) \\
&= e^{-n\lambda(\beta)} \sum_{i = 1}^n \underbrace{\EE(\eta(i,\omega_i)e^{\beta \eta(i,\omega_i)})}_{\lambda'(\beta)e^{\lambda(\beta)}}\prod_{j \neq i} \underbrace{\EE(e^{\beta\eta(j,\omega_j)})}_{e^{\lambda(\beta)}}
= n\lambda'(\beta).
}
Therefore,
\eeq{ \label{last_inequality}
\EE\bigg(\frac{-H_n(\omega)e^{-\beta H_n(\omega)}}{Z_n}\bigg) \leq e^{n\lambda(\beta)}\wt\EE(-H_n(\omega))\wt\EE(Z_n^{-1}) = n\lambda'(\beta)\cdot\EE(Z_n^{-1}e^{-\beta H_n(\omega)}).
}
We now have
\eq{
\frac{\partial}{\partial \beta} \EE(\log Z_n) 
\stackrel{\mbox{\scriptsize\eqref{would_like}}}{=} E\bigg[\EE\bigg(\frac{-H_n(\omega)e^{-\beta H_n(\omega)}}{Z_n}\bigg)\bigg]
&\stackrel{\mbox{\scriptsize\eqref{last_inequality}}}{\leq} n\lambda'(\beta)\cdot E[\EE(Z_n^{-1}e^{-\beta H_n(\omega)})] \\
&\stackrel{\phantom{\eqref{last_inequality}}}{=} n\lambda'(\beta)\cdot\EE[Z_n^{-1}E(e^{-\beta H_n(\omega)})] = n\lambda'(\beta),
}
where the penultimate equality is a consequence of Tonelli's theorem, since $Z_n^{-1}e^{-\beta H_n(\omega)} > 0$.
The inequality \eqref{derivative_ineq_to_show} now follows by dividing by $n$.

\subsubsection{Justification of $\mathrm{(c)}$ in \eqref{would_like}}
Fix $\beta \in (0,\beta_{\max})$.
Choose $q > 1$ such that $q\beta < \beta_{\max}$, and let $q'$ be its H\"{o}lder conjugate:
$1/q + 1/q' = 1$.
Step (c) in \eqref{would_like} will follow from Fubini's theorem once we verify that
\eeq{ \label{before_expectation}
E\bigg(\EE\bigg|\frac{-H_n(\omega)e^{-\beta H_n(\omega)}}{Z_n}\bigg|\bigg) < \infty.
}
Let $g$ and $h$ be as in Lemma \ref{decomposition}, so that we can write
\eeq{ \label{g_and_h}
\bigg|\frac{-H_n(\omega)e^{-\beta H_n(\omega)}}{Z_n}\bigg|
= \frac{|-H_n(\omega)|e^{-\beta H_n(\omega)}}{Z_n}
= \frac{g(-H_n(\omega))}{Z_n} - \frac{h(-H_n(\omega))}{Z_n} \leq \frac{g(-H_n(\omega))}{Z_n}.
}
Temporarily fix a path $\omega \in \Omega_p$.
Since $Z_n$ and $-H_n$, and therefore $g(-H_n)$, are non-decreasing functions of all $\eta(i,x)$, the Harris--FKG inequality shows
\eeq{ \label{fkg_1}
\EE\Big(\frac{g(-H_n(\omega))}{Z_n}\Big) \leq \EE\big[g(-H_n(\omega))\big]\EE(Z_n^{-1}).
}
The first factor satisfies
\eeq{ \label{first_factor}
\EE\big[g(-H_n(\omega))\big]
&= \EE\big[|-H_n(\omega)|e^{-\beta H_n(\omega)}\big] + \EE\big[h(-H_n(\omega))\big] \\
&\leq \EE\bigg[\sum_{i = 1}^n |\eta(i,\omega_i)|\exp\bigg(\beta\sum_{i = 1}^n \eta(i,\omega_i)\bigg)\bigg] + (\beta e)^{-1} \\
&= \EE\bigg[\sum_{i=1}^n |\eta(i,\omega_i)|e^{\beta \eta(i,\omega_i)}\prod_{j\neq i} e^{\beta\eta(j,\omega_j)}\bigg] + (\beta e)^{-1} \\
&\leq n\EE(|\eta| e^{\beta \eta})e^{(n-1)\lambda(\beta)} + (\beta e)^{-1} \\
&\leq n(\EE|\eta|^{q'})^{1/q'}e^{\lambda(q\beta)/q}e^{(n-1)\lambda(\beta)} + (\beta e)^{-1} < \infty.
}
The second factor satisfies
\eeq{ \label{second_factor}
\EE(Z_n^{-1}) = \EE\big(E(e^{-\beta H_n(\omega)})^{-1}\big) \leq \EE\big(E(e^{\beta H_n(\omega)})\big) = E\big(\EE(e^{\beta H_n(\omega)})\big) = n\lambda(-\beta) < \infty,
}
where we have used Tonelli's theorem to exchange the order of integration.
We have thus shown
\eeq{ \label{fubini_hypothesis}
E\bigg(\EE\bigg|\frac{-H_n(\omega)e^{-\beta H_n(\omega)}}{Z_n}\bigg|\bigg)
\stackrel{\mbox{\scriptsize\eqref{g_and_h}}}{\leq} E\bigg(\EE\Big[\frac{g(-H_n(\omega))}{Z_n}\Big]\bigg)
\stackrel{\text{\eqref{fkg_1}--\eqref{second_factor}}}{<} \infty,
}
as desired.

\subsubsection{Justification of $\mathrm{(b)}$ in \eqref{would_like}} \label{justification_b}
By simple differentiation rules,
\eq{
\frac{\partial}{\partial\beta}\log Z_n = \frac{\frac{\partial}{\partial\beta}Z_n}{Z_n}
= \frac{\frac{\partial}{\partial \beta} E(e^{-\beta H_n(\omega)})}{Z_n}.
}
We would like to pass the derivative through the expectation and write
\eq{
\frac{\partial}{\partial \beta} E(e^{-\beta H_n(\omega)}) = E(-H_n(\omega)e^{-\beta H_n(\omega)}).
}
That is, we claim
\eeq{ \label{derivative_through_EE}
&\frac{\partial}{\partial \beta}\bigg[\sum_{x_1,x_2,\dots,x_n \in \Z^d} \exp\bigg(\beta\sum_{i = 1}^n \eta(i,x_i)\bigg)\prod_{i=1}^n P(x_{i-1},x_i)\bigg] \\
&= \sum_{x_1,x_2,\dots,x_n \in \Z^d} \bigg(\sum_{i = 1}^n \eta(i,x_i)\bigg)\exp\bigg(\beta\sum_{i = 1}^n \eta(i,x_i)\bigg)\prod_{i=1}^n P(x_{i-1},x_i) \quad \mathrm{a.s.}
}
To show \eqref{derivative_through_EE} at a particular $\beta_0 \in (0,\beta_{\max})$, it suffices to exhibit $\eps > 0$ and a constant $C_{\vc\eta} < \infty$ depending only on the quenched environment $\vc\eta$, such that
\eeq{ \label{suffices_for_need}
\sum_{x_1,x_2,\dots,x_n\in\Z^d} \bigg|\sum_{i = 1}^n \eta(i,x_i)\bigg|\exp\bigg(\beta\sum_{i = 1}^n \eta(i,x_i)\bigg)\prod_{i=1}^n P(x_{i-1},x_i) < C_{\vc\eta} \quad \forall\, \beta \in [\beta_0-\eps,\beta_0+\eps].
}
Indeed, if \eqref{suffices_for_need} holds, then for $\beta \in [\beta_0-\eps,\beta_0+\eps]$,
\eq{
\sum_{\substack{x_1,x_2,\dots,x_n \in \Z^d \\ \|x_i\|_1 \leq M\, \forall\, i}} \bigg(\sum_{i = 1}^n \eta(i,x_i)\bigg)\exp\bigg(\beta\sum_{i = 1}^n \eta(i,x_i)\bigg)\prod_{i=1}^n P(x_{i-1},x_i)
\xrightarrow[{M\to\infty}]{\mathrm{uniformly~in~}\beta} E(-H_n(\omega)e^{-\beta H_n(\omega)}).
}
In summary, we know by definition that
\eq{
\sum_{\substack{x_1,x_2,\dots,x_n \in \Z^d \\ \|x_i\|_1 \leq M\, \forall\, i}} \exp\bigg(\beta\sum_{i = 1}^n \eta(i,x_i)\bigg)\prod_{i=1}^n P(x_{i-1},x_i) \xrightarrow[M\to\infty]{} E(e^{-\beta H_n(\omega)}) \quad
\text{for all $\beta \in [0,\beta_{\max})$},
}
and we know by the above argument that the derivative of the left-hand side converges uniformly to $E(-H_n(\omega)e^{-\beta H_n(\omega)})$ near $\beta_0$.
It follows that
\eeq{ \label{derivative_on_interval}
\frac{\partial}{\partial\beta}E(e^{-\beta H_n(\omega)})
= E(-H_n(\omega)e^{-\beta H_n(\omega)}) \quad \text{for all $\beta \in (\beta_0-\eps,\beta_0+\eps)$,}
}
in particular when $\beta = \beta_0$.
We are thus left only with the task of establishing \eqref{suffices_for_need} for almost every $\vc\eta$.

Fix $\beta_0 \in (0,\beta_{\max})$, and let $\eps > 0$ be such that $\beta_0 + \eps < \beta_{\max}$ and $\beta_0 - \eps > 0$.
Choose a number $q > 1$ such that $q(\beta_0+\eps) < \beta_{\max}$, and let $q' > 1$ denote its H\"{o}lder conjugate: $1/q + 1/q' = 1$.
For all $\beta \in [\beta_0-\eps,\beta_0+\eps]$, we have the uniform upper bound
\eq{
&\sum_{x_1,x_2,\dots,x_n\in\Z^d} \bigg|\sum_{i = 1}^n \eta(i,x_i)\bigg|\exp\bigg(\beta\sum_{i = 1}^n \eta(i,x_i)\bigg)\prod_{i=1}^n P(x_{i-1},x_i) \\
&\leq \sum_{x_1,x_2,\dots,x_n\in\Z^d} \sum_{i = 1}^n |\eta(i,x_i)|\exp\bigg((\beta_0+\eps)\sum_{i = 1}^n \eta(i,x_i)_+\bigg) \prod_{i=1}^n P(x_{i-1},x_i) \\
&\leq \sum_{x_1,x_2,\dots,x_n\in\Z^d} \bigg[\sum_{i = 1}^n |\eta(i,x_i)|(e^{(\beta_0+\eps)\eta(i,x_i)}+1)\prod_{j \neq i} (e^{(\beta_0+\eps)\eta(j,x_j)}+1)\bigg] \prod_{i=1}^n P(x_{i-1},x_i),
}
where $x_+ \coloneqq \max(0,x)$.
Furthermore, this upper bound is finite for almost every $\vc\eta$, since
\eq{
&\sum_{x_1,x_2,\dots,x_n\in\Z^d} \EE\bigg[\sum_{i = 1}^n |\eta(i,x_i)|(e^{(\beta_0+\eps)\eta(i,x_i)}+1)\prod_{j \neq i} (e^{(\beta_0+\eps)\eta(j,x_j)}+1)\bigg] \prod_{i=1}^n P(x_{i-1},x_i) \\
&= n\EE\big[|\eta|(e^{(\beta_0+\eps)\eta}+1)\big]\big(\EE[e^{(\beta_0+\eps)\eta}+1]\big)^{n-1} \\
&\leq n(\EE|\eta|^{q'})^{1/q'}\Big(\EE\big[(e^{(\beta_0+\eps)\eta}+1)^q\big]\Big)^{1/q}\big(\EE[e^{(\beta_0+\eps)\eta}+1]\big)^{n-1} \\
&\leq n(\EE|\eta|^{q'})^{1/q'}\Big(2^q\EE\big[e^{q(\beta_0+\eps)\eta}+1\big]\Big)^{1/q}\big(\EE[e^{(\beta_0+\eps)\eta}+1]\big)^{n-1} \\
&= 2n(\EE|\eta|^{q'})^{1/q'}\big(e^{\lambda(q(\beta_0+\eps))}+1\big)^{1/q}(e^{\lambda(\beta_0+\eps)}+1)^{n-1} < \infty.
}
In particular, some $C_{\vc\eta}$ satisfying \eqref{suffices_for_need} exists almost surely, and so
\eqref{derivative_through_EE} holds almost surely.
Therefore, (b) holds in \eqref{would_like}:
\eq{
\EE\bigg[\frac{\frac{\partial}{\partial\beta}E(e^{-\beta H_n(\omega)})}{Z_n}\bigg]
= \EE\bigg[\frac{E(-H_n(\omega)e^{-\beta H_n(\omega)})}{Z_n}\bigg]
=  \EE\bigg[E\bigg(\frac{-H_n(\omega)e^{-\beta H_n(\omega)}}{Z_n}\bigg)\bigg].
}

\subsubsection{Justification of $\mathrm{(a)}$ in \eqref{would_like}}
Fix $\beta_0 \in (0,\beta_{\max})$ and $\eps > 0$ such that $\beta_0+\eps < \beta_{\max}$ and $\beta_0-\eps > 0$.
We begin by writing
\eq{
\Big[\frac{\partial}{\partial \beta} \EE(\log Z_n)\Big]_{\beta=\beta_0} = 
\lim_{h \to 0} \EE\Big[\frac{\log Z_n(\beta_0+h) - \log Z_n(\beta_0)}{h}\Big].
}
We will ultimately invoke dominated convergence to pull the limit through the expectation.
Consider any fixed $h$ satisfying $|h| \leq \eps$.
Now, $\log Z_n$ is almost surely continuously differentiable, and
so by the mean value theorem,
\eq{
\frac{\log Z_n(\beta_0+h) - \log Z_n(\beta_0)}{h} = \Big[\frac{\partial}{\partial\beta}\log Z_n\Big]_{\beta = \beta_0+\eps_{\vc\eta}} \quad \mathrm{a.s.}
}
for some $\eps_{\vc\eta}$ depending on the random environment $\vc\eta$ and satisfying $|\eps_{\vc\eta}| \leq |h| \leq \eps$.
But then by convexity of $\log Z_n$, we can bound the difference quotient by consider the endpoints of the interval $[\beta_0-\eps,\beta_0+\eps]$:
\eq{
\Big|\frac{\log Z_n(\beta_0+h) - \log Z_n(\beta_0)}{h}\Big| &= \Big|\Big[\frac{\partial}{\partial\beta}\log Z_n\Big]_{\beta=\beta_0+\eps_{\vc\eta}}\Big| \quad \mathrm{a.s.} \\
&\leq \max\bigg(\Big|\Big[\frac{\partial}{\partial\beta}\log Z_n\Big]_{\beta=\beta_0-\eps}\Big|,\Big|\Big[\frac{\partial}{\partial\beta}\log Z_n\Big]_{\beta=\beta_{0}+\eps}\Big|\bigg) \quad \mathrm{a.s.},
}
where the maximum is now a dominating function independent of $h$.
We showed \eqref{derivative_on_interval} holds almost surely, and so
\eq{
\frac{\partial}{\partial\beta}\log Z_n = E\bigg(\frac{-H_n(\omega)e^{-\beta H_n(\omega)}}{Z_n}\bigg)  \quad \text{for all $\beta \in (0,\beta_{\max})$} \quad \mathrm{a.s.}
}
Therefore, for any $\beta_1 \in (0,\beta_{\max})$ we have
\eq{
\EE\Big|\Big[\frac{\partial}{\partial\beta}\log Z_n\Big]_{\beta=\beta_1}\Big| = \EE\bigg|E\bigg(\frac{-H_n(\omega)e^{-\beta_1 H_n(\omega)}}{Z_n}\bigg)\bigg|
\leq \EE\bigg(E\bigg|\frac{-H_n(\omega)e^{-\beta_1 H_n(\omega)}}{Z_n}\bigg|\bigg)
\stackrel{\mbox{\scriptsize\eqref{fubini_hypothesis}}}{<} \infty.
}
In particular, the dominating function is integrable:
\eq{
&\EE\bigg[\max\bigg(\Big|\Big[\frac{\partial}{\partial\beta}\log Z_n\Big]_{\beta=\beta_0-\eps}\Big|,\Big|\Big[\frac{\partial}{\partial\beta}\log Z_n\Big]_{\beta=\beta_{0}+\eps}\Big|\bigg)\bigg] \\
&\leq \EE\Big|\Big[\frac{\partial}{\partial\beta}\log Z_n\Big]_{\beta=\beta_0 - \eps}\Big| + \EE\Big|\Big[\frac{\partial}{\partial\beta}\log Z_n\Big]_{\beta=\beta_{0}+\eps}\Big|
< \infty.
}
Dominated convergence now proves (a) in \eqref{would_like}:
\eq{
\Big[\frac{\partial}{\partial \beta} \EE(\log Z_n)\Big]_{\beta=\beta_0}
&= \lim_{h \to 0} \EE\Big[\frac{\log Z_n(\beta_0+h) - \log Z_n(\beta_0)}{h}\Big] \\
&= \EE\Big[\lim_{h \to 0}\frac{\log Z_n(\beta_0+h) - \log Z_n(\beta_0)}{h}\Big] 
= \EE\Big(\Big[\frac{\partial}{\partial\beta} \log Z_n\Big]_{\beta = \beta_0}\Big).
}
\end{proof}

\section{Adaptation of abstract machinery} \label{adaptation_update_map}
In this section we recall and adapt some necessary definitions and results from \cite{bates-chatterjee17}.
The key change we will make is to the definition of the ``update map" that sends a fixed endpoint distribution $\rho_n(\omega_n = \cdot)$ to the conditional law of $\rho_{n+1}(\omega_{n+1} = \cdot)$ given $\f_n$, viewed as a random variable in a suitable metric space $(\s,d)$.
The construction of $\s$ is identical to what was done in \cite{bates-chatterjee17}; we briefly describe it in Section \ref{pspm} and justify some adaptations in Section \ref{generalized_topology}.
Then, in Section \ref{update_map} we newly define the update map to allow for general reference walk $P$, and then lift it to a map of probability measures on $\s$, a space denoted $\p(\s)$.
Finally, in Section \ref{continuity} we prove continuity of the update map with respect to Wasserstein distance, which implies continuity of its lift.

\subsection{Partitioned subprobability measures} \label{pspm}
The \textit{quenched endpoint distribution} at time $n$, given by
\eeq{
f_n(x) \coloneqq \rho_n(\omega_n = x) = \frac{Z_n(x)}{Z_n}, \quad x \in \Z^d, \quad \text{where} \quad
Z_n(x) = E(e^{-\beta H_n(\omega)}; \omega_n = x),
 \label{fn_def}
}
is a Borel measurable function of $\vc\eta$ when considered in the space $\ell^1(\Z^d)$.
Of course, the distributions under consideration satisfy $f_n \geq 0$ and $\sum_{x\in\Z^d} f_n(x) = 1$ for all $n$.
Therefore, there is a natural embedding of these endpoint distributions into an auxiliary space
\eq{
\s_0 \coloneqq \{f : \N \times \Z^d \to [0,1] : \|f\| \leq 1\}, \quad \text{where} \quad \N = \{1,2,\dots\},
}
which inherits the $\ell^1(\N\times\Z^d)$ topology induced by the $\ell^1$-norm,
\eeq{
\|f\| \coloneqq \sum_{u \in \N \times \Z^d} |f(u)| \label{norm_def}.
}
For concreteness, the image of $f_n$ under this embedding can be explicitly defined by
\eq{
f_n(k,x) = \begin{cases}
\frac{Z_n(x)}{Z_n}  &\text{if $k = 1$} \\
0 &\text{otherwise}.
\end{cases}
}

For each $\alpha > 1$, we construct a pseudometric $d_\alpha$ on $\s_0$ as follows.
Define ``addition" and ``subtraction" on $\N \times \Z^d$ by extending the group structure of $\Z^d$, but only if the first coordinates agree:
\eq{
(n,x) \pm (m,y) = \begin{cases}
x \pm y &\text{if $n=m$} \\
\infty &\text{otherwise.}
\end{cases}
}
Similarly, define the $\ell^1$ ``norm" by
\eq{
\|(n,x)\|_1 = \|x\|_1, \qquad \|\infty\|_1 = \infty.
}
Now, for a finite subset $A \subset \N \times \Z^d$, call a map $\phi : A \to \N \times \Z^d$ an \textit{isometry} of degree $m$ if for every $u,v \in A$,
\eeq{
\|u - v\|_1 < m \quad \text{or} \quad \|\phi(u) - \phi(v)\|_1 < m \quad \Rightarrow \quad u - v = \phi(u) - \phi(v). \label{isometry_def}
}
The maximum integer $m$ for which \eqref{isometry_def} holds (possibly infinite) is called the \textit{maximum degree} of $\phi$, and is denoted $\deg(\phi)$.
The following lemmas demonstrate two useful properties of isometries: composition and extension.

\begin{lemma}[{\cite[Lemma 2.2]{bates-chatterjee17}}] \label{composition}
Let $\phi : A \to \N \times \Z^d$ and $\psi : B \to \N \times \Z^d$ be isometries.
Define $A' \coloneqq \{a \in A : \phi(a) \in B\}$.  Then $\theta: A' \to \N \times \Z^d$ defined by $\theta(u) = \psi(\phi(u))$ is an isometry with $\deg(\theta) \geq \min(\deg(\phi),\deg(\psi))$.
\end{lemma}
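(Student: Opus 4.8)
The plan is to verify the isometry condition \eqref{isometry_def} for $\theta$ directly, with the candidate degree $m := \min(\deg(\phi),\deg(\psi))$. First I would record two trivialities: $A'\subseteq A$ is finite, and $\theta$ is well-defined on $A'$, since $a\in A'$ forces $\phi(a)\in B$, so that $\psi(\phi(a))$ is meaningful. The substance is then to show that for every $u,v\in A'$, if $\|u-v\|_1<m$ or $\|\theta(u)-\theta(v)\|_1<m$, then $u-v=\theta(u)-\theta(v)$. Since the implication is vacuous unless at least one of these two $\ell^1$-quantities is finite (and $<m$), there are two symmetric cases; if both premises hold, either case applies.

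In the case $\|u-v\|_1<m$: finiteness of $\|u-v\|_1$ forces $u$ and $v$ to share a first coordinate, so $u-v\in\Z^d$. Because $m\le\deg(\phi)$ and $u,v\in A$, the isometry property of $\phi$ gives $u-v=\phi(u)-\phi(v)$; hence $\phi(u)-\phi(v)\in\Z^d$ and $\|\phi(u)-\phi(v)\|_1=\|u-v\|_1<m\le\deg(\psi)$. Applying the isometry property of $\psi$ to $\phi(u),\phi(v)\in B$ then yields $\phi(u)-\phi(v)=\psi(\phi(u))-\psi(\phi(v))=\theta(u)-\theta(v)$, and composing the two equalities gives $u-v=\theta(u)-\theta(v)$. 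The case $\|\theta(u)-\theta(v)\|_1<m$ is handled by running the same chain in the opposite order: finiteness of $\|\theta(u)-\theta(v)\|_1$ together with $m\le\deg(\psi)$ lets the $\psi$-isometry property identify $\phi(u)-\phi(v)$ with $\theta(u)-\theta(v)$ (in particular placing $\phi(u)-\phi(v)$ in $\Z^d$ with $\ell^1$-norm $<m\le\deg(\phi)$), and then the $\phi$-isometry property identifies $u-v$ with $\phi(u)-\phi(v)$. Either way $u-v=\theta(u)-\theta(v)$, so $\theta$ satisfies \eqref{isometry_def} at degree $m$, whence $\deg(\theta)\ge\min(\deg(\phi),\deg(\psi))$.

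I do not expect any real obstacle here: the argument is a short two-step diagram chase through $\phi$ and $\psi$. The only bookkeeping worth flagging is the role of the absorbing symbol $\infty$ in the definitions of the partial ``subtraction'' and the $\ell^1$ ``norm'' on $\N\times\Z^d$ — at each step one must know that the difference under consideration actually lies in $\Z^d$ (equivalently, that the two points share a first coordinate) before invoking the relevant isometry property. But this is automatic, since a point of $\N\times\Z^d$ has finite $\ell^1$-``norm'' precisely when it is not $\infty$, and each case is entered from a finite $\ell^1$-distance; so the care amounts only to tracking when $u-v$, $\phi(u)-\phi(v)$, and $\theta(u)-\theta(v)$ are in $\Z^d$ versus equal to $\infty$.
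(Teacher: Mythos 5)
Your proof is correct: the two-case chase (push $u-v$ forward through $\phi$ then $\psi$ when $\|u-v\|_1<m$, and pull $\theta(u)-\theta(v)$ back through $\psi$ then $\phi$ when $\|\theta(u)-\theta(v)\|_1<m$), together with the bookkeeping you flag about when differences lie in $\Z^d$ rather than being $\infty$, is exactly the standard verification; the present paper states this lemma without proof, deferring to \cite[Lemma 2.2]{bates-chatterjee17}, and your argument is the same direct one used there. Nothing is missing.
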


\begin{lemma}[{\cite[Lemma 2.3]{bates-chatterjee17}}] \label{extension}
Suppose that $\phi : A \to \N \times \Z^d$ is an isometry of degree $m \geq 3$.
Then $\phi$ can be extended to an isometry $\Phi: A^{(1)} \to \N \times \Z^d$ of degree $m-2$, where
\eq{
A^{(1)} \coloneqq \{v \in \N \times \Z^d : \|u - v\|_1 \leq 1 \text{ for some $u \in A$}\} \supset A.
}
By induction, if $\phi$ has $\deg(\phi) \geq 2k + m$, then $\phi$ can be extended to an isometry $\Phi : A^{(k)} \to \N \times \Z^d$ of degree $m$, where
\eq{
A^{(k)} \coloneqq \{v \in \N \times \Z^d : \|u - v\|_1 \leq k \text{ for some $u \in A$}\} \supset A.
}
\end{lemma}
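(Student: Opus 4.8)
The plan is to reduce everything to the single-step claim — that an isometry $\phi$ with $\deg(\phi) \geq m \geq 3$ extends to $A^{(1)}$ with degree at least $m-2$ — and then iterate, using that $A^{(k)} = (A^{(k-1)})^{(1)}$, that each $A^{(j)}$ is again finite when $A$ is, and that along the iteration the running degree stays at least $m+2 \geq 3$ (it drops by $2$ at each of the $k$ steps and starts at $\geq 2k+m$). For the single step I would define the extension $\Phi : A^{(1)} \to \N \times \Z^d$ explicitly by
\[
\Phi(v) := \phi(u) + (v - u), \qquad \text{for any } u \in A \text{ with } \|u - v\|_1 \leq 1 .
\]
A valid $u$ exists by definition of $A^{(1)}$, and for $v \in A$ one may take $u = v$, so $\Phi$ genuinely extends $\phi$; moreover $\|u-v\|_1 \leq 1$ forces $u-v\neq\infty$, so $u$ and $v$ share a first coordinate and $\phi(u)+(v-u)$ really is an element of $\N\times\Z^d$.

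The first thing to check is that $\Phi$ is well defined. If $u,u'\in A$ both lie within $\ell^1$-distance $1$ of $v$, then $u,u',v$ share a first coordinate and $\|u-u'\|_1 \leq 2 < m \leq \deg(\phi)$, so the defining property of the isometry $\phi$ gives $\phi(u)-\phi(u') = u-u'$; rearranging shows $\phi(u)+(v-u) = \phi(u')+(v-u')$, so $\Phi(v)$ is independent of the choice of $u$. (This is the one place the hypothesis $m\geq 3$ is used.)

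Next I would verify $\deg(\Phi)\geq m-2$: for $v,w\in A^{(1)}$ with $\|v-w\|_1 < m-2$ or $\|\Phi(v)-\Phi(w)\|_1 < m-2$, I must show $v-w = \Phi(v)-\Phi(w)$. Choose $u,u'\in A$ with $\|u-v\|_1\leq1$ and $\|u'-w\|_1\leq1$, so $\Phi(v)=\phi(u)+(v-u)$ and $\Phi(w)=\phi(u')+(w-u')$. In the first case, $\|u-u'\|_1 \leq \|u-v\|_1+\|v-w\|_1+\|w-u'\|_1 < 1+(m-2)+1 = m$; in the second, writing $\phi(u)=\Phi(v)+(u-v)$ and $\phi(u')=\Phi(w)+(u'-w)$ gives $\|\phi(u)-\phi(u')\|_1 \leq \|\Phi(v)-\Phi(w)\|_1+\|u-v\|_1+\|u'-w\|_1 < (m-2)+1+1 = m$. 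Either way the isometry property of $\phi$ yields $\phi(u)-\phi(u') = u-u'$, and hence
\[
\Phi(v)-\Phi(w) = \big(\phi(u)-\phi(u')\big) + (v-w) - (u-u') = v-w ,
\]
as needed. Iterating the single step $k$ times then produces the desired isometry $\Phi:A^{(k)}\to\N\times\Z^d$ of degree at least $m$.

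The step I expect to be the real nuisance is not any of these inequalities but the bookkeeping around the symbol $\infty$: each time the isometry property or the triangle inequality is used above, one must be certain that the points involved lie in a common fiber $\{n\}\times\Z^d$, so that $+$, $-$, and $\|\cdot\|_1$ behave like genuine $\Z^d$-arithmetic. This is guaranteed by the finiteness of the quantities appearing in the hypotheses ($\|v-w\|_1<m-2<\infty$, respectively $\|\Phi(v)-\Phi(w)\|_1<\infty$, which forces first coordinates to agree and propagates through the defining relation $\Phi(v)=\phi(u)+(v-u)$), but this case analysis has to be tracked carefully at every step.
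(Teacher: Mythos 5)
Your proof is correct, and it is essentially the argument behind the cited result: this paper quotes the lemma from \cite[Lemma 2.3]{bates-chatterjee17} without reproving it, and the proof there proceeds exactly as you do --- extend by $\Phi(v)=\phi(u)+(v-u)$ for a nearby $u\in A$, use $\deg(\phi)\geq 3$ to get well-definedness, lose $2$ in the degree via the triangle inequality (treating the two disjuncts in the isometry condition separately), and iterate using $(A^{(j)})^{(1)}=A^{(j+1)}$. Your attention to the $\infty$/fiber bookkeeping and to finiteness of the sets $A^{(j)}$ is exactly the right care to take; no gaps.
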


Given an isometry $\phi$ (which implicitly stands for the pair $(A,\phi)$) and $\alpha > 1$, we define the $\alpha$-distance function according to $\phi$:
\eq{
d_{\alpha,\phi}(f,g) \coloneqq \alpha\sum_{u \in A} |f(u) - g(\phi(u))| + \sum_{u \notin A} f(u)^\alpha + \sum_{u \notin \phi(A)} g(u)^\alpha + 2^{-\deg(\phi)}, \quad f,g \in \s_0.
}
Finally, the pseudometric is obtained by taking the optimal $\alpha$-distance:
\eq{
d_\alpha(f,g) \coloneqq \inf_{\phi\, :\, \deg(\phi) \geq 1} d_{\alpha,\phi}(f,g), \quad 
}
where $\deg(\phi) \geq 1$ means $\phi$ is injective.
The case $\alpha=2$ was considered in \cite{bates-chatterjee17}, and we can easily adapt the proof given there to show $d_\alpha$ satisfies the triangle inequality.
Since $d_\alpha$ is clearly symmetric in $f$ and $g$ (by changing $\phi$ to $\phi^{-1}$), this result verifies that $d_\alpha$ is a pseudometric.

\begin{lemma}
For any $f,g,h \in \s_0$,
\eeq{
d_\alpha(f,h) \leq d_\alpha(f,g) + d_\alpha(g,h). \label{triangle}
}
\end{lemma}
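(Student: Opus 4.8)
The plan is to establish the triangle inequality by composing near-optimal isometries witnessing $d_\alpha(f,g)$ and $d_\alpha(g,h)$, then bounding the $\alpha$-distance of the composite by the sum. Fix $\epsilon > 0$ and choose isometries $\phi : A \to \N \times \Z^d$ and $\psi : B \to \N \times \Z^d$ with $\deg(\phi), \deg(\psi) \geq 1$ such that $d_{\alpha,\phi}(f,g) \leq d_\alpha(f,g) + \epsilon$ and $d_{\alpha,\psi}(g,h) \leq d_\alpha(g,h) + \epsilon$. The natural candidate for an isometry witnessing the distance from $f$ to $h$ is the composition $\theta = \psi \circ \phi$ restricted to $A' := \{a \in A : \phi(a) \in B\}$; by Lemma \ref{composition}, $\theta$ is an isometry with $\deg(\theta) \geq \min(\deg(\phi),\deg(\psi))$.

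The key estimate is to compare $d_{\alpha,\theta}(f,h)$ against $d_{\alpha,\phi}(f,g) + d_{\alpha,\psi}(g,h)$ term by term. For the linear (matched) part over $A'$, the triangle inequality for $|\cdot|$ on $\R$ gives $\alpha|f(u) - h(\theta(u))| \leq \alpha|f(u) - g(\phi(u))| + \alpha|g(\phi(u)) - h(\psi(\phi(u)))|$, and summing over $u \in A'$ yields a quantity bounded by the corresponding linear sums in the two given distances (the first sum is over a subset of $A$, the second over a subset of $B$ via the substitution $v = \phi(u)$). The real work is in controlling the unmatched ``tail'' terms $\sum_{u \notin A'} f(u)^\alpha$ and $\sum_{u \notin \theta(A')} h(u)^\alpha$. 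A point $u \notin A'$ either lies outside $A$ — in which case $f(u)^\alpha$ is already charged to $\sum_{u \notin A} f(u)^\alpha$ in $d_{\alpha,\phi}(f,g)$ — or lies in $A$ but has $\phi(u) \notin B$; for such $u$, one must bound $f(u)^\alpha$ using that $g(\phi(u))$ is small (since $\phi(u) \notin B$ contributes $g(\phi(u))^\alpha$ to $d_{\alpha,\psi}(g,h)$) together with the matched bound $\alpha|f(u) - g(\phi(u))|$ from $d_{\alpha,\phi}$. Here one uses elementary inequalities such as $f(u)^\alpha \leq (|f(u) - g(\phi(u))| + g(\phi(u)))^\alpha$ and convexity/subadditivity of $t \mapsto t^\alpha$ on $[0,1]$ — for instance $(a+b)^\alpha \leq 2^{\alpha-1}(a^\alpha + b^\alpha)$ or, for values in $[0,1]$, $a^\alpha \leq a$ to trade an $\alpha$-power for a linear term — to absorb $f(u)^\alpha$ into $\alpha|f(u)-g(\phi(u))| + g(\phi(u))^\alpha$. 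The symmetric argument handles $\sum_{u \notin \theta(A')} h(u)^\alpha$. Finally the degree penalty satisfies $2^{-\deg(\theta)} \leq 2^{-\deg(\phi)} + 2^{-\deg(\psi)}$ since $\deg(\theta) \geq \min(\deg(\phi),\deg(\psi))$ forces $2^{-\deg(\theta)} \leq \max(2^{-\deg(\phi)}, 2^{-\deg(\psi)}) \leq 2^{-\deg(\phi)} + 2^{-\deg(\psi)}$.

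Assembling these pieces gives $d_\alpha(f,h) \leq d_{\alpha,\theta}(f,h) \leq d_{\alpha,\phi}(f,g) + d_{\alpha,\psi}(g,h) \leq d_\alpha(f,g) + d_\alpha(g,h) + 2\epsilon$, and letting $\epsilon \downarrow 0$ finishes the proof. The main obstacle is the bookkeeping for the tail terms in the case $u \in A$ but $\phi(u) \notin B$: one needs the constant $\alpha$ in front of the linear term to be large enough (it is, since $\alpha > 1$) to dominate the cross terms produced when expanding $(|f(u)-g(\phi(u))| + g(\phi(u)))^\alpha$, and care is required that no term is double-counted when a point lies outside both $A$ and $\phi^{-1}(B)$. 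Since the $\alpha = 2$ case is carried out in \cite{bates-chatterjee17}, the adaptation is a matter of replacing the square with a general power and checking that the same elementary inequalities go through; I would cite \cite[Lemma 2.4]{bates-chatterjee17} for the structure and supply only the modified tail estimate in detail.
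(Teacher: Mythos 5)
Your proposal follows the paper's proof essentially step for step: compose near-optimal isometries via Lemma \ref{composition}, split the matched sum with the triangle inequality, charge the unmatched tails to the two given distances according to whether $u \notin A$ or $u \in A$ with $\phi(u) \notin B$, and use $2^{-\deg(\theta)} \leq 2^{-\deg(\phi)} + 2^{-\deg(\psi)}$. The one place where your sketch would go wrong if executed literally is the tail absorption: the inequality $(a+b)^\alpha \leq 2^{\alpha-1}(a^\alpha + b^\alpha)$ puts a factor $2^{\alpha-1} > 1$ in front of $g(\phi(u))^\alpha$, which cannot be absorbed into the single term $\sum_{u \notin B} g(u)^\alpha$ appearing in $d_{\alpha,\psi}(g,h)$, so you would only obtain the triangle inequality up to a multiplicative constant (and $a^\alpha \leq a$ does not rescue this). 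The correct target you state, $f(u)^\alpha \leq \alpha|f(u)-g(\phi(u))| + g(\phi(u))^\alpha$, is exactly what the paper uses, and it follows in one line not from convexity but from the fact that $t \mapsto t^\alpha$ has Lipschitz constant $\alpha$ on $[0,1]$ (its derivative $\alpha t^{\alpha-1}$ is at most $\alpha$ there); with that substitution, and the symmetric estimate for the $h$-tail, your argument matches the paper's proof exactly.
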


\begin{proof}
Fix $\eps > 0$, and choose isometries $\phi : A \to \N \times \Z^d$ and $\psi : B \to \N \times \Z^d$ such that
\eq{
d_{\alpha,\phi}(f,g) < d_\alpha(f,g) + \eps \quad \text{and} \quad
d_{\alpha,\psi}(g,h) < d_\alpha(g,h) + \eps.
}
Define $\theta : A' \to \N \times \Z^d$ as in Lemma \ref{composition}.
We have
\eeq{
d_{\alpha,\theta}(f,h) = \alpha\sum_{u \in A'} |f(u) - h(\theta(u))| + \sum_{u \notin A'} f(u)^\alpha + \sum_{u \notin \theta(A')} h(u)^\alpha + 2^{-\deg(\theta)}. \label{theta_distance}
}
The first sum above can be bounded as
\eeq{
\alpha\sum_{u \in A'} |f(u) - h(\theta(u))| &\leq \alpha\sum_{u \in A'} \bigl(|f(u) - g(\phi(u))| + |g(\phi(u)) - h(\psi(\phi(u)))|\bigr)  \\
&= \alpha\sum_{u \in A'} |f(u) - g(\phi(u))| + \alpha\sum_{u \in B \cap \phi(A)} |g(u) - h(\psi(u))|. \label{tri_bound_1}
}
Now, the Lipschitz norm of the function $t \mapsto t^\alpha$ on $[0,1]$ is $\alpha$, meaning
\eq{
f(u)^\alpha \leq \alpha|f(u)-g(v)| + g(v)^\alpha \quad \text{for any $u,v \in \N \times \Z^d$}.
}
As a result, the second sum in \eqref{theta_distance} satisfies
\eeq{
\sum_{u \notin A'} f(u)^{\alpha} &= \sum_{u \in A \setminus A'} f(u)^\alpha + \sum_{u \notin A} f(u)^\alpha  \\
&\leq \sum_{u \in A \setminus A'}\bigl( \alpha|f(u) - g(\phi(u))| + g(\phi(u))^\alpha\bigr) + \sum_{u \notin A} f(u)^\alpha  \\
&\leq \alpha\sum_{u \in A \setminus A'} |f(u) - g(\phi(u))| + \sum_{u \notin B} g(u)^\alpha + \sum_{u \notin A} f(u)^\alpha. \label{tri_bound_2}
}
Similarly, the third sum satisfies
\eeq{
\sum_{u \notin \theta(A')} h(u)^\alpha &= \sum_{u \in \psi(B) \setminus \theta(A')} h(u)^\alpha + \sum_{u \notin \psi(B)} h(u)^{\alpha}  \\
&\leq \sum_{u \in B \setminus \phi(A)} \bigl(\alpha|h(\psi(u)) - g(u)| + g(u)^\alpha\bigr) + \sum_{u \notin \psi(B)} h(u)^{\alpha}  \\
&\leq \alpha\sum_{u \in B \setminus \phi(A)} |g(u) - h(\psi(u))| + \sum_{u \notin \phi(A)} g(u)^{\alpha}+ \sum_{u \notin \psi(B)} h(u)^{\alpha}. \label{tri_bound_3}
}
Finally, Lemma \ref{composition} guarantees 
\eeq{
\deg(\theta) &\geq \min(\deg(\phi),\deg(\psi))  \\
\quad \Rightarrow 2^{-\deg(\theta)} &\leq 2^{-\deg(\phi)} + 2^{-\deg(\psi)}. \label{tri_bound_4}
}
Using \eqref{tri_bound_1}--\eqref{tri_bound_4} in \eqref{theta_distance}, we find
\eq{
d_\alpha(f,h) \leq d_{\alpha,\theta}(f,h) \leq d_{\alpha,\phi}(f,g) + d_{\alpha,\psi}(g,h) < d_\alpha(f,g) + d_\alpha(g,h) + 2\eps.
}
As $\eps$ is arbitrary, \eqref{triangle} follows.
\end{proof}

With this pseudometric, a new space is realized by taking the quotient of $\s_0$ with respect to $d_\alpha$:
\eq{
\s \coloneqq \s_0/(d_\alpha = 0).
}
That is, $\s$ is the set of equivalence classes of $\s_0$ under the equivalence relation
\eq{
f \equiv g \quad \Leftrightarrow \quad d_\alpha(f,g) = 0.
}
We call $\s$ the space of \textit{partitioned subprobability measures}, and it naturally inherits the metric $d_\alpha$.
Lemma \ref{equivalence_classes} below shows that for distinct $\alpha,\alpha' > 1$, we have $d_\alpha(f,g) = 0$ if and only if $d_{\alpha'}(f,g) = 0$.
Therefore, we are justified in not decorating the space $\s$ with an $\alpha$ parameter, since $\s_0/(d_\alpha = 0)$ is always the same set.

The quotient map $\iota : \s_0 \to \s$ that sends an element to its equivalence class is Borel measurable with respect to the metric topology, see \cite[Lemma 2.12]{bates-chatterjee17}.
By definition, $d_\alpha(f,g)$ can be evaluated at any representatives for $f$ and $g$.
The lemma below gives a complete description of the equivalence classes; roughly speaking, they are the orbits under translations.
It should be noted that this lemma 
was proved in \cite[Corollary 2.6]{bates-chatterjee17} with $\alpha=2$.
Nevertheless, the proof found there is immediately generalized by replacing the $2$'s with $\alpha$'s.   

\begin{lemma}[{see \cite[Corollary 2.6]{bates-chatterjee17}}] \label{equivalence_classes}
Define the {\normalfont $\N$-support} of $f \in \s_0$ to be the set
\eq{
H_f \coloneqq \{n \in \N : f(n,x) > 0 \text{ for some $x \in \Z^d$}\}.
}
The {\normalfont support number} of $f$ is the cardinality of $H_f$, which is possibly infinite.
For $g \in \s_0$ with $\N$-support $H_g$,
$d_\alpha(f,g) = 0$ if and only if there is a bijection $\sigma : H_f \to H_g$ and vectors $(x_n)_{n \in H_f}$ in $\Z^d$ such that 
\eeq{
g(\sigma(n),x) = f(n,x+x_n) \quad \text{for all $n \in H_f$, $x \in \Z^d$.} \label{better_def}
}
\end{lemma}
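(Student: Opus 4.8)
The plan is to follow the proof of \cite[Corollary 2.6]{bates-chatterjee17} essentially verbatim, replacing every occurrence of the exponent $2$ by $\alpha$. The substitution is harmless because the exponent enters only through two elementary facts, each valid for all $\alpha>1$: that $t^\alpha\le t$ for $t\in[0,1]$, so that the $\ell^1$-summability of $f$ and $g$ still controls the truncation tails $\sum_{u\notin A}f(u)^\alpha$ and $\sum_{u\notin\phi(A)}g(u)^\alpha$; and that $t\mapsto t^\alpha$ is Lipschitz on $[0,1]$, which was needed only to establish the triangle inequality \eqref{triangle}, already verified above. Throughout, write $\operatorname{supp} f:=\{u\in\N\times\Z^d:f(u)>0\}$, a countable set of finite total $f$-mass.

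For the ``if'' direction, suppose $\sigma$ and $(x_n)_{n\in H_f}$ satisfy \eqref{better_def}, and set $\phi(n,x):=(\sigma(n),x-x_n)$ on $\bigcup_{n\in H_f}\{n\}\times\Z^d$. Since $\sigma$ is a bijection and a within-slice translation preserves all $\ell^1$-distances, the restriction of $\phi$ to any finite subset $A$ of its domain is an isometry of infinite degree, and $f(u)=g(\phi(u))$ for every $u\in A$ by \eqref{better_def}. Moreover $\phi$ restricts to a bijection of $\operatorname{supp} f$ onto $\operatorname{supp} g$, so given $\eps>0$ one may take $A$ large enough that $\sum_{u\notin A}f(u)<\eps$ and $\sum_{u\notin\phi(A)}g(u)<\eps$. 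Then $d_{\alpha,\phi}(f,g)\le\sum_{u\notin A}f(u)^\alpha+\sum_{u\notin\phi(A)}g(u)^\alpha<2\eps$, and letting $\eps\to0$ gives $d_\alpha(f,g)=0$.

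For the ``only if'' direction, pick isometries $\phi_k:A_k\to\N\times\Z^d$ with $d_{\alpha,\phi_k}(f,g)\to 0$, so that $\deg(\phi_k)\to\infty$, $\sum_{u\notin A_k}f(u)^\alpha\to 0$, $\sum_{u\notin\phi_k(A_k)}g(u)^\alpha\to 0$, and $\sum_{u\in A_k}|f(u)-g(\phi_k(u))|\to 0$. The first tail bound forces every fixed $u\in\operatorname{supp} f$ into $A_k$ for all large $k$, whence $g(\phi_k(u))\to f(u)>0$; since $g$ has summable values, $\phi_k(u)$ is eventually confined to the finite set $\{w:g(w)\ge f(u)/2\}$. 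A diagonal extraction over the countable set $\operatorname{supp} f$ yields a subsequence along which $\phi_k(u)$ stabilizes, say at $\phi_\infty(u)$, for every $u\in\operatorname{supp} f$; one then has $g(\phi_\infty(u))=f(u)$, and injectivity of $\phi_\infty$ on $\operatorname{supp} f$ inherited from that of the $\phi_k$. Feeding $\deg(\phi_k)\to\infty$ into \eqref{isometry_def} shows $\phi_\infty$ maps each $\N$-slice of $\operatorname{supp} f$ rigidly into a single $\N$-slice and maps distinct slices to distinct slices, which produces an injection $\sigma$ on $H_f$ together with vectors $x_n$ such that $\phi_\infty(n,x)=(\sigma(n),x-x_n)$ on $\operatorname{supp} f$. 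Running the same finite-confinement and diagonal argument on $\phi_k^{-1}$, using the tail bound on $\sum_{u\notin\phi_k(A_k)}g(u)^\alpha$, shows $\phi_\infty$ sends $\operatorname{supp} f$ \emph{onto} $\operatorname{supp} g$, so $\sigma(H_f)=H_g$; a slice-by-slice comparison of supports then upgrades $g(\sigma(n),x)=f(n,x+x_n)$ from $\operatorname{supp} f$ to all of $\Z^d$, which is \eqref{better_def}.

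The main obstacle is this second direction: one must distill, out of a merely \emph{approximate} minimizing family of finite partial isometries, an \emph{exact} slice-wise translation between the (possibly infinite) supports of $f$ and $g$. The two features making the distillation work are the $\ell^1$-summability of $f$ and $g$—which confines each $\phi_k(u)$ and each $\phi_k^{-1}(w)$ to a fixed finite target set and hence licenses a diagonal limit $\phi_\infty$—and the divergence $\deg(\phi_k)\to\infty$—which forces $\phi_\infty$ to respect the $\N$-partition and to act as a genuine translation on each slice. The remaining steps are routine bookkeeping, identical to that in \cite{bates-chatterjee17}.
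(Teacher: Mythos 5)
Your proof is correct and takes essentially the same route as the paper, which simply notes that the argument of \cite[Corollary 2.6]{bates-chatterjee17} carries over verbatim with the exponent $2$ replaced by $\alpha$. Your reconstruction---the infinite-degree translation isometry for the ``if'' direction, and the diagonal extraction of a limiting slice-wise translation from near-optimal isometries (using $\deg(\phi_k)\to\infty$ together with the bound $t^\alpha\le t$ on $[0,1]$ to control the tails) for the ``only if'' direction---is precisely that argument.
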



The key fact, and indeed the goal of constructing $\s$, is the following result.
It was proved for $\alpha = 2$ in \cite[Theorem 2.9]{bates-chatterjee17}, and once more the proof readily extends to any $\alpha > 1$ by a modification as simple as changing the $2$'s to $\alpha$'s.

\begin{lemma}[{cf.~\cite[Theorem 2.9]{bates-chatterjee17}}] \label{compactness}
$(\s,d_\alpha)$ is a compact metric space.
\end{lemma}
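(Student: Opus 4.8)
The plan is to show that $(\s,d_\alpha)$ is both complete and totally bounded, since a metric space is compact iff it has these two properties. Equivalently, and perhaps more transparently, I would show that $(\s,d_\alpha)$ is sequentially compact: every sequence $(f_k)_{k\geq 1}$ in $\s_0$ has a subsequence that is $d_\alpha$-Cauchy and whose limit lies in $\s$. Before doing either, I would record the elementary bound $d_\alpha(f,g) \leq \alpha \|f\| + \|g\| + 1 \leq 2\alpha + 1$ (taking $\phi$ to be the empty isometry, which has degree $\infty$, so the $2^{-\deg(\phi)}$ term vanishes, or a degree-$1$ isometry with the convention stated), so the space has finite diameter; this is a sanity check but also shows that the $2^{-\deg(\phi)}$ penalty is what forces finite-degree isometries to actually carry mass.

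\textbf{Total boundedness.} Fix $\eps > 0$. Choose $m$ large with $2^{-m} < \eps/2$, and choose a finite box $\Lambda = [-R,R]^d \times \{1,\dots,N\} \subset \N\times\Z^d$ and a finite $\delta$-net $\{c_1,\dots,c_M\}$ of $[0,1]$ such that any $f\in\s_0$ satisfies $\sum_{u\notin\Lambda} f(u)^\alpha < \eps/4$ after translating its mass near the origin — here one uses that $\sum_u f(u) \leq 1$ forces $\sum_u f(u)^\alpha$ to be controlled and that only finitely many coordinates can carry mass exceeding a fixed threshold. The finitely many functions supported on $\Lambda$ with values in the $\delta$-net then form a finite $\eps$-net for $d_\alpha$ on $\s_0$, hence for $d_\alpha$ on $\s$: given $f$, use a translation (an isometry of infinite degree, so with no $2^{-\deg}$ penalty) to bring its bulk into $\Lambda$, truncate the tail (contributing at most $\eps/4$), and round to the net (contributing at most $\alpha|\Lambda|\delta$, which is $<\eps/4$ for $\delta$ small). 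This produces $d_\alpha(f, \text{net point}) < \eps$.

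\textbf{Completeness.} Let $(f_k)$ be $d_\alpha$-Cauchy. Pass to a subsequence with $d_\alpha(f_k, f_{k+1}) < 2^{-k}$, and pick near-optimal isometries $\phi_k : A_k \to \N\times\Z^d$ realizing these distances; their degrees tend to infinity. Using the composition lemma (Lemma~\ref{composition}) and extension lemma (Lemma~\ref{extension}), compose and extend the $\phi_k$'s to build, for each fixed finite window, a consistent family of translations aligning the $f_k$'s on that window for all large $k$; the $2^{-\deg}$ terms being summable guarantees that on any fixed finite window the isometries eventually stabilize to genuine translations. Choosing representatives accordingly, the $f_k$ converge coordinatewise on $\N\times\Z^d$ to some $f_\infty$, and Fatou gives $\|f_\infty\| \leq 1$, so $f_\infty \in \s_0$; the tail sums $\sum_{u\notin A_k} f_k(u)^\alpha$ being small then upgrades coordinatewise convergence to $d_\alpha(f_k,f_\infty)\to 0$, and the full Cauchy sequence converges because a Cauchy sequence with a convergent subsequence converges.

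\textbf{Main obstacle.} The delicate point is the completeness argument, specifically the diagonalization: each $d_\alpha(f_k,f_{k+1})$ is witnessed by its \emph{own} isometry $\phi_k$ with its own degree, and a priori these are incompatible (different translations on overlapping regions, or merely partial maps). Reconciling them into a single limiting configuration requires carefully invoking Lemmas~\ref{composition} and \ref{extension} to show that, restricted to any fixed finite set of coordinates, the composed isometries $\phi_{k}\circ\cdots\circ\phi_{\ell}$ have degree exceeding the diameter of that set for all large $k,\ell$, hence act as honest translations there; one must also check that these translations are consistent across nested windows, which is where the group structure of $\Z^d$ (embedded via the equivalence-class description in Lemma~\ref{equivalence_classes}) is used. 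This is exactly the step where the construction's design — penalizing low-degree isometries and tail mass simultaneously — pays off, and it is essentially the $\alpha$-analogue of the $\alpha=2$ argument in \cite[Theorem 2.9]{bates-chatterjee17}, so I would follow that proof, replacing the $2$'s by $\alpha$'s and using that $t\mapsto t^\alpha$ is $\alpha$-Lipschitz on $[0,1]$ wherever the old proof used that $t\mapsto t^2$ is $2$-Lipschitz.
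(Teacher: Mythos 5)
Your bottom line agrees with the paper, which does not reprove compactness at all: it simply imports the $\alpha=2$ proof of \cite[Theorem 2.9]{bates-chatterjee17} and observes that the only place the exponent enters is the $\alpha$-Lipschitz bound for $t\mapsto t^\alpha$ on $[0,1]$ (together with $\alpha>1$ making $\sum_u f(u)^\alpha$ small over small atoms) --- exactly the point you identify in your last paragraph. Your organization, however, differs: you split compactness into completeness plus total boundedness, whereas the argument being imported establishes sequential compactness directly, by a diagonal extraction of the limiting sizes and relative positions of the large atoms and an assembly of these profiles into a limiting partitioned subprobability measure. Either route is legitimate once the details are supplied, and your identification of the Cauchy/alignment step as the delicate part is accurate.

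One step of your own sketch, as written, would fail: in the total-boundedness argument you propose to ``use a translation (an isometry of infinite degree) to bring the bulk of $f$ into $\Lambda$.'' Level-by-level translations cannot do this. Take $f\in\s_0$ with mass $1/2$ at $(1,0)$ and mass $1/2$ at $(1,x)$, where $\|x\|_1$ is much larger than the diameter of $\Lambda$: any translation leaves one of the two atoms outside $\Lambda$, and that atom contributes at least $2^{-\alpha}$ to the tail sum (or, if matched against a net point supported in $\Lambda$, at least $\alpha/2$ to the matching sum), so no finite family of functions supported on a fixed $\Lambda$ can be an $\eps$-net if one only allows translations. The cure is precisely the extra freedom built into \eqref{isometry_def}: an isometry of finite degree $m_0\approx\log_2(2/\eps)$ is permitted to send two atoms of the \emph{same} level to \emph{different} levels of $\N$ as soon as they are at distance at least $m_0$, at a cost of only $2^{-m_0}<\eps/2$. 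Since there are at most $\delta^{-1}$ atoms of mass at least $\delta$, the clusters obtained by chaining such atoms at mutual distance less than $m_0$ have diameter at most $m_0\delta^{-1}$; mapping each cluster rigidly into its own level of a box of radius $m_0\delta^{-1}$ with at most $\delta^{-1}$ levels, and rounding the values to a finite grid, then does produce a finite $\eps$-net. (This splitting is the whole reason the $\N$ factor is present, cf.\ Lemma~\ref{equivalence_classes}; the same mechanism is what your completeness step must exploit, via Lemmas~\ref{composition} and~\ref{extension}, as you indicate.) With that correction your plan goes through; a very minor additional slip is the diameter bound, which via the empty isometry is simply $d_\alpha(f,g)\le\|f\|+\|g\|\le 2$.
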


We will generally write $f$ for an element of $\s$, and explicitly indicate $f \in \s_0$ when referring to a representative in $\s_0$.
When $f$ is being evaluated at some $u \in \N \times \Z^d$, a representative has been chosen.
For certain global functionals such as $\|\cdot\|$ defined in \eqref{norm_def}, the choice of representative does not matter, and we can safely write $\|f\|$ without reference to a particular representative.
To see necessary and sufficient conditions for a function to have this property, the reader is referred to \cite[Corollary 2.7]{bates-chatterjee17}.
The result there gives conditions for checking that a function of $\s_0$ is well defined on $\s$ (i.e.~is constant on equivalence classes).

%

Finally, we consider the space of (Borel) probability measures on $\s$, denoted $\p(\s)$, together with the Wasserstein metric (see \cite[Definition 6.4]{villani09}),
\eq{
\w_\alpha(\mu,\nu) \coloneqq \inf_{\pi \in \Pi(\mu,\nu)} \int_{\s\times\s} d_\alpha(f,g)\ \pi(\dd f,\dd g).
}
Here $\Pi(\mu,\nu)$ denotes the set of probability measures on $\s \times \s$ having $\mu$ and $\nu$ as marginals.
Lemma \ref{compactness} implies $(\p(\s),\w_\alpha)$ is also a compact metric space.

\begin{lemma}[{see \cite[Remark 6.19]{villani09}}]
$(\p(\s),\w_\alpha)$ is a compact metric space.
\end{lemma}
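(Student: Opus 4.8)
The plan is to obtain this from two classical facts and the compactness already established in Lemma~\ref{compactness}: first, that on a space with bounded metric the Wasserstein distance $\w_\alpha$ metrizes the topology of weak convergence of probability measures; and second, that the set of all Borel probability measures on a compact metric space is weakly (sequentially) compact. Granting these, any sequence in $\p(\s)$ has a weakly convergent subsequence with limit in $\p(\s)$, hence a $\w_\alpha$-convergent subsequence, so $(\p(\s),\w_\alpha)$ is compact; and since $\w_\alpha$ is a genuine metric, sequential compactness coincides with compactness.

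In more detail, I would first record that $d_\alpha$ is bounded: by Lemma~\ref{compactness}, $(\s,d_\alpha)$ is compact, so $D := \diam(\s,d_\alpha) < \infty$. Consequently every coupling $\pi \in \Pi(\mu,\nu)$ satisfies $\int_{\s\times\s} d_\alpha\,\dd\pi \le D$, so $\w_\alpha$ takes only finite values on $\p(\s)\times\p(\s)$. That $\w_\alpha$ is symmetric, obeys the triangle inequality, and satisfies $\w_\alpha(\mu,\nu)=0 \Rightarrow \mu=\nu$ is part of the general theory of optimal transport on a bounded Polish space, cf.~\cite[Chapter~6]{villani09}; thus $(\p(\s),\w_\alpha)$ is a metric space.

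For compactness, let $(\mu_n)_{n\ge1}$ be a sequence in $\p(\s)$. Since $\s$ is compact, $\{\mu_n\}$ is automatically tight, so Prokhorov's theorem produces a subsequence $(\mu_{n_k})$ converging weakly to some $\mu\in\p(\s)$. (Alternatively, one may invoke Banach--Alaoglu: $\s$ compact metric makes $C(\s)$ separable, so the closed unit ball of $C(\s)^*$ is weak-$*$ sequentially compact, and a weak-$*$ limit of probability measures remains a probability measure.) Because $d_\alpha$ is bounded, weak convergence on $\p(\s)$ is equivalent to convergence in $\w_\alpha$, by \cite[Remark~6.19]{villani09}; hence $\w_\alpha(\mu_{n_k},\mu)\to 0$. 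Every sequence in $\p(\s)$ therefore has a $\w_\alpha$-convergent subsequence with limit in $\p(\s)$, which gives the claim.

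I do not expect a genuine obstacle here: the statement is a repackaging of standard results, and the only place where the hypotheses of the paper enter — rather than pure measure theory — is the boundedness of $d_\alpha$, which is precisely what licenses the ``Wasserstein metrizes weak convergence'' equivalence with no moment conditions, and which is immediate from Lemma~\ref{compactness}.
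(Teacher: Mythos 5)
Your proof is correct and is exactly the standard argument that the paper itself invokes by citation: compactness of $(\s,d_\alpha)$ from Lemma \ref{compactness} makes $d_\alpha$ bounded, so $\w_\alpha$ metrizes weak convergence (Villani, Remark 6.19), and weak sequential compactness of $\p(\s)$ via Prokhorov then gives compactness in $\w_\alpha$. No gap; this matches the paper's route, with the routine details filled in.
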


It is a standard fact (for instance, see \cite[Theorem 6.9]{villani09}) that Wasserstein distance metrizes the topology of weak convergence.
In the \textit{compact} setting, weak convergence is equivalent to convergence of continuous test functions.

\begin{lemma}[{Portmanteau, see \cite[Theorem 2.1]{billingsley99} and \cite[Theorem 1.3.4]{vandervaart-wellner96}}] \label{portmanteau}
Given a function $L: \s \to \R$, define the map $\l : \p(\s) \to \R$ by
\eq{
\l(\mu) \coloneqq \int_\s L(f)\ \mu(\dd f).
}
If $L$ is (lower/upper semi-)continuous, then $\l$ is (lower/upper semi-)continuous.
\end{lemma}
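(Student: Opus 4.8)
The plan is to prove the three claims (lower semicontinuity, upper semicontinuity, continuity) by the same scheme, using only the definition of weak convergence on $\p(\s)$ together with the compactness of $\s$ from Lemma \ref{compactness}. Since $(\p(\s),\w_\alpha)$ is a compact metric space, it suffices to work sequentially: I would take $\mu_n \to \mu$ in $\w_\alpha$ and show that $\l(\mu_n) = \int_\s L\ \dd\mu_n$ converges appropriately to $\l(\mu) = \int_\s L\ \dd\mu$. Because $\w_\alpha$ metrizes weak convergence (the standard fact cited just before the lemma), $\mu_n \to \mu$ is equivalent to $\int_\s \Phi\ \dd\mu_n \to \int_\s \Phi\ \dd\mu$ for every bounded continuous $\Phi : \s \to \R$; and since $\s$ is compact, every continuous function on $\s$ is automatically bounded, so in the continuous case the conclusion is immediate by taking $\Phi = L$.

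For the semicontinuous cases I would reduce to the continuous case by a standard approximation. Suppose $L$ is lower semicontinuous. On the compact metric space $\s$, $L$ is bounded below (it attains its infimum), so after adding a constant I may assume $L \geq 0$; and $L$ can be written as the pointwise supremum of an increasing sequence of continuous functions $L_k \uparrow L$ — for instance the Moreau–Yosida–type inf-convolutions $L_k(f) := \inf_{g \in \s}\big(L(g) + k\, d_\alpha(f,g)\big)$, which are $k$-Lipschitz, nondecreasing in $k$, and converge up to $L$ by lower semicontinuity. Then for each fixed $k$, continuity of $L_k$ gives $\int_\s L_k\ \dd\mu_n \to \int_\s L_k\ \dd\mu$, whence
\eq{
\liminf_{n\to\infty} \int_\s L\ \dd\mu_n \geq \liminf_{n\to\infty}\int_\s L_k\ \dd\mu_n = \int_\s L_k\ \dd\mu,
}
and letting $k \to \infty$ with monotone convergence yields $\liminf_n \l(\mu_n) \geq \l(\mu)$, i.e.\ lower semicontinuity of $\l$. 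The upper semicontinuous case follows by applying this to $-L$.

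There is essentially no serious obstacle here; the only point requiring a little care is making sure the approximation step is internally consistent with the setting — namely that the $L_k$ really are continuous (this uses that $d_\alpha$ is a genuine metric on $\s$, provided by Lemma \ref{compactness}, so that $d_\alpha(f,\cdot)$ is continuous and hence each inf-convolution $L_k$ is Lipschitz), that they increase to $L$ (lower semicontinuity plus compactness), and that the monotone convergence interchange is legitimate (guaranteed once $L$ is bounded below, which compactness again provides). One could alternatively cite the portmanteau theorem directly: weak convergence $\mu_n \to \mu$ implies $\liminf_n \mu_n(G) \geq \mu(G)$ for open $G$, equivalently $\liminf_n \int \Phi\ \dd\mu_n \geq \int \Phi\ \dd\mu$ for bounded lower semicontinuous $\Phi$, and $L$ bounded below on compact $\s$ reduces to exactly this; this is the version already invoked in the statement's attribution to \cite[Theorem 2.1]{billingsley99}, so the proof can be as short as that observation.
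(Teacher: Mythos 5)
Your proof is correct, and there is nothing to compare it against in detail: the paper does not prove Lemma \ref{portmanteau} at all, but simply cites \cite[Theorem 2.1]{billingsley99} and \cite[Theorem 1.3.4]{vandervaart-wellner96}, so your argument supplies exactly the standard content behind those citations. The two ingredients you use --- that $\w_\alpha$ metrizes weak convergence so the continuous case follows from boundedness of $L$ on the compact space $\s$, and that a lower semicontinuous $L$ bounded below is the increasing limit of the Lipschitz inf-convolutions $L_k(f)=\inf_g\bigl(L(g)+k\,d_\alpha(f,g)\bigr)$, with monotone convergence finishing the semicontinuous case --- are precisely the textbook route, and all the side conditions you flag (boundedness below on compact $\s$, continuity of $d_\alpha(f,\cdot)$, legitimacy of the monotone limit) do hold in this setting.
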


\subsection{Equivalence of generalized metrics} \label{generalized_topology}
We have introduced a family of metrics $(d_\alpha)_{\alpha>1}$ on $\s$, where the flexibility of choosing $\alpha$ sufficiently close to $1$ will allow us to make more effective use of the abstract methods in \cite{bates-chatterjee17}.
Namely, the only assumption we need is \eqref{mgf_assumption}.
It is important, however, that each metric induces the same topology.
The next proposition verifies this fact.
In particular, any functional on $\s$ that was proved in \cite{bates-chatterjee17} to be continuous with respect to $d_2$ remains continuous under $d_\alpha$, $\alpha > 1$.

\begin{prop} \label{metrics_equivalent}
For any $\alpha,\alpha' > 1$, $f \in \s$, and sequence $(f_n)_{n\geq1}$ in $\s$, we have $d_{\alpha}(f,f_n) \to 0$ as $n \to \infty$ if and only if $d_{\alpha'}(f,f_n) \to 0$.
\end{prop}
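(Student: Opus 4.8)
The plan is to prove that $d_\alpha$ and $d_{\alpha'}$ are related by two-sided inequalities in which the two ``directions'' of comparison have different exponents, but both are monotone functions vanishing at $0$; this is enough to conclude topological equivalence. Concretely, fix $1 < \alpha \le \alpha'$ without loss of generality. First I would observe that for any isometry $\phi : A \to \N\times\Z^d$ and any $f,g\in\s_0$, since all the terms $f(u), g(u)$ lie in $[0,1]$ we have $f(u)^{\alpha'}\le f(u)^\alpha$, and $\alpha|f(u)-g(\phi(u))|\le \alpha'|f(u)-g(\phi(u))|$, while the term $2^{-\deg(\phi)}$ is unchanged. Taking the infimum over $\phi$ on both sides yields the easy direction $d_{\alpha'}(f,g)\le d_\alpha(f,g)$ directly. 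Hence if $d_\alpha(f,f_n)\to 0$ then $d_{\alpha'}(f,f_n)\to 0$.

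For the reverse direction I would fix $\eps>0$ and, given $f,g$, choose an isometry $\phi:A\to\N\times\Z^d$ with $d_{\alpha',\phi}(f,g) < d_{\alpha'}(f,g)+\eps$. The goal is to bound $d_{\alpha,\phi}(f,g)$ in terms of $d_{\alpha',\phi}(f,g)$. The four pieces behave as follows: the penalty term $2^{-\deg(\phi)}$ is identical; the matched-mass sum scales by the harmless constant factor $\alpha/\alpha' \le 1$; the trouble is the two ``leftover mass'' sums $\sum_{u\notin A}f(u)^\alpha$ and $\sum_{u\notin\phi(A)}g(u)^\alpha$, which must be controlled by $\sum_{u\notin A}f(u)^{\alpha'}$ and $\sum_{u\notin\phi(A)}g(u)^{\alpha'}$ respectively. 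Here the key point is that $f\in\s_0$ means $\sum_u f(u)\le 1$, so the total leftover mass $m_f := \sum_{u\notin A}f(u)$ is at most $1$; and by the power-mean / $\ell^p$-monotonicity inequality applied to the probability-like weights, one gets a bound of the form $\sum_{u\notin A}f(u)^\alpha \le m_f^{\,1-\alpha/\alpha'}\bigl(\sum_{u\notin A}f(u)^{\alpha'}\bigr)^{\alpha/\alpha'}$, or more simply $\bigl(\sum_{u\notin A}f(u)^\alpha\bigr)^{1/\alpha}\le \bigl(\sum_{u\notin A}f(u)^{\alpha'}\bigr)^{1/\alpha'}$ since $\ell^\alpha\hookrightarrow\ell^{\alpha'}$ with norm $1$. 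This gives $\sum_{u\notin A}f(u)^\alpha \le \bigl(\sum_{u\notin A}f(u)^{\alpha'}\bigr)^{\alpha/\alpha'} \le \bigl(d_{\alpha',\phi}(f,g)\bigr)^{\alpha/\alpha'}$. Collecting the pieces, I obtain an inequality of the shape
\[
d_\alpha(f,g) \le d_{\alpha,\phi}(f,g) \le C\,\psi\bigl(d_{\alpha',\phi}(f,g)\bigr) \le C\,\psi\bigl(d_{\alpha'}(f,g)+\eps\bigr),
\]
where $\psi(t) := t + 2t^{\alpha/\alpha'}$ (or similar) is continuous, increasing, and $\psi(0)=0$, and $C$ is an absolute constant. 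Letting $\eps\downarrow 0$ gives $d_\alpha(f,g)\le C\,\psi(d_{\alpha'}(f,g))$, so $d_{\alpha'}(f,f_n)\to 0$ forces $d_\alpha(f,f_n)\to 0$.

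The main obstacle is exactly the handling of the leftover-mass sums: passing from exponent $\alpha'$ to the smaller exponent $\alpha$ \emph{increases} each summand $f(u)^\alpha \ge f(u)^{\alpha'}$, so one cannot bound termwise and must instead exploit the global subprobability constraint $\|f\|\le 1$ together with an $\ell^p$-nesting inequality; one must also be slightly careful that the relevant $f$ here is an arbitrary representative in $\s_0$, but since the infimum defining $d_\alpha$ ranges over all isometries (and the quantities $\sum_{u\notin A}f(u)^\alpha$ are evaluated on whatever representative realizes the near-optimal $\phi$), this causes no difficulty. Everything else — the matched sum and the degree term — transfers trivially. Finally, since topological equivalence of $d_\alpha$ and $d_{\alpha'}$ on $\s$ follows from these two monotone comparisons, and since Lemma \ref{equivalence_classes} already shows the underlying set $\s_0/(d_\alpha=0)$ does not depend on $\alpha$, the proposition is complete; as noted in the text, this also ensures that any $d_2$-continuous functional from \cite{bates-chatterjee17} remains $d_\alpha$-continuous.
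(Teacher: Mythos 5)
Your overall strategy --- pass to a near-optimal isometry for one metric, transfer the four terms, and use the subprobability constraint to handle the leftover-mass sums --- is the same as the paper's, but both of the key inequalities you actually write down are false, so as it stands the argument has a genuine gap. First, the ``easy direction'' $d_{\alpha'}(f,g)\le d_\alpha(f,g)$ does not follow from your termwise observations: the matched-mass term carries the \emph{larger} coefficient $\alpha'$, so it moves opposite to the leftover terms, and the inequality itself is false (two point masses of sizes $0.9$ and $0.89$ at the same site give $d_\alpha(f,g)=0.01\,\alpha$, which increases with $\alpha$). This slip is harmless, since $d_{\alpha',\phi}(f,g)\le \tfrac{\alpha'}{\alpha}\,d_{\alpha,\phi}(f,g)$ for $\alpha\le\alpha'$ still yields that direction of the convergence claim. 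The serious problem is the hard direction: the $\ell^p$-nesting inequality goes the other way, namely $(\sum f^{\alpha'})^{1/\alpha'}\le(\sum f^{\alpha})^{1/\alpha}$ when $\alpha\le\alpha'$, so your bound $\sum_{u\notin A}f(u)^\alpha\le\bigl(\sum_{u\notin A}f(u)^{\alpha'}\bigr)^{\alpha/\alpha'}$ is backwards, and it fails even under $\sum_{u\notin A}f(u)\le 1$: with $N$ sites of mass $1/N$, $\alpha=2$, $\alpha'=4$, the left side is $1/N$ while the right side is $N^{-3/2}$. The same example defeats your first form $\bigl(\sum_{u\notin A}f(u)\bigr)^{1-\alpha/\alpha'}\bigl(\sum_{u\notin A}f(u)^{\alpha'}\bigr)^{\alpha/\alpha'}$, so the control of the leftover sums --- which you correctly identify as the crux --- is not established.

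The step can be repaired, after which your proof essentially becomes the paper's. Either use the correct interpolation exponent: H\"older gives $\sum f^\alpha\le\bigl(\sum f\bigr)^{(\alpha'-\alpha)/(\alpha'-1)}\bigl(\sum f^{\alpha'}\bigr)^{(\alpha-1)/(\alpha'-1)}\le\bigl(\sum f^{\alpha'}\bigr)^{(\alpha-1)/(\alpha'-1)}$ (note the exponent is $(\alpha-1)/(\alpha'-1)$, not $\alpha/\alpha'$; in the equal-atoms example this holds with equality). Or argue as the paper does: $\sum_{u\notin A}f(u)^{\alpha}\le\bigl(\max_{u\notin A}f(u)\bigr)^{\alpha-1}\sum_{u\notin A}f(u)$ together with $\max_{u\notin A}f(u)\le d_{\alpha',\phi}(f,g)^{1/\alpha'}$, giving the exponent $(\alpha-1)/\alpha'$. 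Since this max-times-mass bound works for any ordered pair of exponents, the paper does not even need your reduction to $\alpha\le\alpha'$: it proves the single implication ``$d_\alpha(f,f_n)\to0$ implies $d_{\alpha'}(f,f_n)\to0$'' for arbitrary $\alpha,\alpha'>1$ and concludes by interchanging the roles of $\alpha$ and $\alpha'$.
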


\begin{proof}
Since $\alpha$ and $\alpha'$ are interchangeable in the claim, it suffices prove the ``only if" direction.
That is, we assume $d_\alpha(f,f_n) \to 0$ as $n \to \infty$.
Fix representatives $f,f_n \in \s_0$.
Given $\eps > 0$, set
\eq{
\delta \coloneqq \min\Big\{\Big(\frac{\eps}{4}\Big)^{\alpha/(\alpha'-1)},\frac{\alpha}{\alpha'}\Big(\frac{\eps}{4}\Big),\frac{\eps}{4}\Big\}.
}
Then choose $N$ sufficiently large that $d_\alpha(f,f_n) < \delta$ for all $n \geq N$.
In particular, for any such $n$, there is an isometry $\phi_n : A_n \to \N \times \Z^d$ satisfying
$d_{\alpha,\phi_n}(f,f_n) < \delta$.
In particular,
\eq{
\alpha'\sum_{u \in A_n} |f(u)-f_n(\phi_n(u))| \leq \frac{\alpha'}{\alpha} d_{\alpha,\phi_n}(f,f_n) < \frac{\alpha'}{\alpha}\delta < \frac{\eps}{4}.
}
Also,
\eq{
\sum_{u \notin A_n} f(u)^{\alpha'} \leq \max_{u \notin A_n} f(u)^{\alpha'-1} \sum_{u \notin A_n} f(u)
\leq \Big(\max_{u \notin A_n} f(u)\Big)^{\alpha'-1} \leq d_{\alpha,\phi_n}(f,f_n)^{(\alpha'-1)/\alpha} 
< \delta^{(\alpha'-1)/\alpha} < \frac{\eps}{4},
}
and similarly
\eq{
\sum_{u \notin \phi_n(A_n)} f_n(u)^{\alpha'} < \delta^{(\alpha'-1)/\alpha} < \frac{\eps}{4}.
}
Finally,
\eq{
2^{-\deg(\phi_n)} \leq d_{\alpha,\phi_n}(f,f_n) < \delta < \frac{\eps}{4}.
}
These four inequalities together show
\eq{
d_{\alpha'}(f,f_n) \leq d_{\alpha',\phi_n}(f,f_n) < \eps \quad \text{for all $n \geq N$.}
}
As $\eps > 0$ is arbitrary, it follows that $d_{\alpha'}(f,f_n) \to 0$.

\end{proof}

\subsection{Generalized update map} \label{update_map}
Throughout the remainder of the manuscript, we fix $\beta \in (0,\beta_{\max}) $ according to \eqref{mgf_assumption}, and we also fix some $\alpha > 1$ such that $\alpha\beta < \beta_{\max}$.
We then restrict our attention to $\s$ equipped with the metric $d_\alpha$, and $\p(\s)$ with $\w_\alpha$.
Proposition \ref{metrics_equivalent} tells us that the topology on $\s$ does not depend on $\alpha$, although the same is not true for the topology on $\p(\s)$ induced by $\w_\alpha$.
Indeed, there can exist functions $\vphi : \s \to \R$ which are Lipschitz-1 with respect to some $d_\alpha$ but not Lipschitz at all with respect to some other $d_{\alpha'}$.

We write $f_n$ to denote the (random) endpoint distribution under the polymer measure $\rho_n$, belonging to either $\ell^1(\Z^d)$ or $\s$ depending on context.
Notice that we have the recursion
\eq{
f_n(x) = \frac{Z_n(x)}{Z_n}
&= \frac{1}{Z_n}\sum_{x_1,\cdots,x_{n-1}} \exp\bigg(\beta\sum_{i = 1}^{n-1} \eta(i,\omega_i)\bigg)e^{\beta\eta(n,x)}\bigg(\prod_{i=1}^{n-1} P(x_{i-1},x_i)\bigg)P(x_{n-1},x) \\
&= \frac{Z_{n-1}}{Z_n} \sum_{x_{n-1}\in\Z^d} \frac{Z_{n-1}(x_{n-1})}{Z_{n-1}}e^{\beta\eta(n,x)}P(x_{n-1},x) \\
&= \frac{Z_{n-1}}{Z_n} \sum_{x_{n-1} \in \Z^d} f_{n-1}(x_{n-1})e^{\beta\eta(n,x)}P(x_{n-1},x).
} 
which implies
\eeq{
\frac{Z_n}{Z_{n-1}} = \frac{Z_n}{Z_{n-1}} \sum_{z \in \Z^d} f_n(z) 
= \sum_{z \in \Z^d} \sum_{y \in \Z^d} f_{n-1}(y)e^{\beta\eta(n,z)}P(y,z). \label{ratio_equality}
}
Alternatively, we have
\eq{
f_n(x) = \frac{\sum_{y} f_{n-1}(y)e^{\beta\eta(n,x)}P(y,x)}{\sum_z \sum_y f_{n-1}(y)e^{\beta\eta(n,z)}P(y,z)}.
}
This identity shows how $f_0 \mapsto f_1 \mapsto \cdots $ forms a Markov chain when embedded into $\s$.
Namely, we identify $f_{n-1}$ with its equivalence class in $\s$ so that a representative takes values on $\N \times \Z^d$ instead of $\Z^d$. (In this case, the support number is just 1.)
Then the law of $f_n \in \s$ given $f = f_{n-1}$ is the law of the random variable $F \in \s$ defined by
\eeq{ \label{F_def_0}
F(u) = \frac{\sum_{v \in \N \times \Z^d} f(v)e^{\beta\eta_u}P(v,u)}{\sum_{w\in \N \times \Z^d}\sum_{v\in \N \times \Z^d} f(v)e^{\beta\eta_w}P(v,w)},
}
where $(\eta_w)_{w \in \N \times \Z^d}$ is an i.i.d.~collection of random variables having the same law as $\eta$, and
\eq{
P(v,w) = \begin{cases}
P(y,z) &\text{if $v = (n,y)$, $w = (n,z)$} \\
0 &\text{otherwise}.
\end{cases}
}
To simplify notation, we write $v \sim w$ in the first case (i.e.~$v$ and $w$ have the same first coordinate) and $v \nsim w$ otherwise. 

\begin{remark}
Although the indexing of $\eta_w$ by $w \in \N \times \Z^d$ might appear to reflect a notion of time, we are not using $\N$ to consider time.
Rather, in order to compactify the space of measures on $\Z^d$, we needed to pass to subprobability measures on $\N \times \Z^d$.
To avoid confusion, we will never write $\N$ to index time.
Following this rule, we will write $\eta_w$ whenever we wish to think of a random environment on $\N \times \Z^d$, always at a \textit{fixed} time.
When considering the original random environment defining the polymer measures, we will follow the standard $\eta(i,x)$ notation.
In either case, we will continue to use boldface $\vc\eta$ when referring to the entire collection of environment random variables.
\end{remark}

Generalizing \eqref{F_def_0} to $f \in \s$ that may have $\|f\| < 1$, we define $Tf \in \p(\s)$ to be the law of $F \in \s$ defined by
\eeq{
F(u) = \frac{\sum_{v\sim u} f(v)e^{\beta\eta_u}P(v,u)}{\sum_{w\in\N\times\Z^d}\sum_{v\sim w} f(v)e^{\beta\eta_w}P(v,w)+(1-\|f\|)e^{\lambda(\beta)}}. \label{F_def}
}
Notice that the expectation (with respect to $\vc \eta$) of the numerator is $e^{\lambda(\beta)}\sum_{v \sim u} f(v)P(v,u)$, while the expectation of the denominator is $e^{\lambda(\beta)}$.
Therefore, these quantities are almost surely finite, and so $F$ is well-defined.
In order for $Tf$ to be well-defined, we must check the following:
\begin{itemize}
\item[(i)] Given any $f \in \s_0$, the map $\R^{\N\times\Z^d} \to \s$ given by $\vc\eta \mapsto F$ is Borel measurable, where $\R^{\N\times\Z^d}$ is equipped with the product topology and product measure $(\mathfrak{L}_\eta)^{\otimes \N\times\Z^d}$, and $\mathfrak{L}_\eta$ is the law of $\eta$.
\item[(ii)] The law of $F$ does not depend on the representative $f \in \s_0$.
\end{itemize}
Claim (i) is immediate, since $\vc \eta \mapsto F$ is clearly a measurable map from $\R^{\N\times\Z^d}$ to $\s_0$.
After all, it is simply the quotient of sums of measurable functions.
And then $F \to \iota(F)$ from $\s_0$ to $\s$ is measurable by \cite[Lemma 2.12]{bates-chatterjee17}.
Claim (ii) is given by the following lemma.

\begin{lemma}[{cf.~\cite[Proposition 3.1]{bates-chatterjee17}}] \label{same_law}
Suppose $f,g \in \s_0$ satisfy $d_\alpha(f,g) = 0$.
Define $F$ as in $\eqref{F_def}$, and similarly define
\eeq{
G(u) \coloneqq \frac{\sum_{v\sim u} g(v) e^{\beta \zeta_u}P(v,u)}{\sum_{w\in\N\times\Z^d} \sum_{v\sim w} g(v)e^{\beta \zeta_w}P(v,w) + (1 - \|g\|)e^{\lambda(\beta)}},\quad u \in \N \times \Z^d, \label{G_def}
}
where the $\zeta_w$ are i.i.d., each having law $\mathfrak{L}_\eta$.
Then when these functions are mapped into $\s$ by $\iota$, the law of $F$ is equal to the law of $G$.
\end{lemma}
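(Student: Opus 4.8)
The plan is to reduce to the explicit description of the $d_\alpha$-equivalence relation given by Lemma \ref{equivalence_classes}, and then to build a measure-preserving coupling of the two random environments under which $F$ and $G$ become identified in $\s$ almost surely. Since $d_\alpha(f,g)=0$, Lemma \ref{equivalence_classes} supplies a bijection $\sigma : H_f \to H_g$ and vectors $(x_n)_{n\in H_f}$ in $\Z^d$ with $g(\sigma(n),x) = f(n,\, x+x_n)$ for all $n\in H_f$, $x\in\Z^d$; summing over $x$ and then over $n$ shows $\|g\|=\|f\|$. I would also note at the outset that the $\N$-support of $F$ equals $H_f$ (and likewise that of $G$ equals $H_g$), because whenever $f(n,\cdot)\not\equiv0$ there is $y_0$ with $f(n,y_0)>0$ and some $z$ with $P(y_0,z)>0$, so $\sum_y f(n,y)P(y,z)>0$; hence $\sigma$ is a bijection between the $\N$-supports of $F$ and $G$.

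Given the i.i.d.\ family $(\eta_w)_{w\in\N\times\Z^d}$ defining $F$, define $(\zeta_w)_{w\in\N\times\Z^d}$ by $\zeta_{(\sigma(n),x)} := \eta_{(n,\, x+x_n)}$ for $n\in H_f$, $x\in\Z^d$, and let $\zeta_w$ on levels outside $H_g$ be an independent i.i.d.\ family with law $\mathfrak{L}_\eta$. Because $(n,x)\mapsto(n,x+x_n)$ is a bijection of $H_f\times\Z^d$ and $\sigma$ is a bijection $H_f\to H_g$, each $\zeta_w$ reads off a distinct coordinate of $(\eta_{w'})$ or of the auxiliary family, so $(\zeta_w)$ is again i.i.d.\ with marginal $\mathfrak{L}_\eta$; consequently $G$ built from this $(\zeta_w)$ has the same law as $G$ built from the canonical environment.

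Next I would verify that, under this coupling, $G$ and $F$ agree level-by-level up to translation. Fix $n\in H_f$ and $z\in\Z^d$. The numerator of $G$ at $(\sigma(n),z)$ is $e^{\beta\zeta_{(\sigma(n),z)}}\sum_y g(\sigma(n),y)P(y,z) = e^{\beta\eta_{(n,z+x_n)}}\sum_y f(n,\,y+x_n)P(y,z)$; the substitution $y'=y+x_n$ together with the translation invariance $P(y'-x_n,z)=P(y',\,z+x_n)$ coming from \eqref{walk_assumption} turns this into $e^{\beta\eta_{(n,z+x_n)}}\sum_{y'} f(n,y')P(y',\,z+x_n)$, which is exactly the numerator of $F$ at $(n,\,z+x_n)$. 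Summing this identity over $z$ (shifting $z\mapsto z+x_n$) and then over $n\in H_f$, and using that $f,g$ are supported on $H_f,H_g$ respectively, shows that the first sum in the denominator of $G$ equals that of $F$; the leftover terms match since $\|f\|=\|g\|$. Hence $G(\sigma(n),z)=F(n,\,z+x_n)$ for all $n\in H_f$, $z\in\Z^d$, with both functions vanishing off their $\N$-supports. Applying Lemma \ref{equivalence_classes} (with the bijection $\sigma$ between the $\N$-supports of $F$ and $G$ and the vectors $x_n$) gives $d_\alpha(F,G)=0$, i.e.\ $\iota(F)=\iota(G)$ almost surely. Since $\iota(G)$ has the same law whether computed from the coupled $(\zeta_w)$ or from a fresh canonical environment, the laws of $\iota(F)$ and $\iota(G)$ coincide, which is the claim.

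The conceptual content is light; the main obstacle is bookkeeping. One must confirm that the reindexed environment $(\zeta_w)$ is genuinely i.i.d.\ — which requires the coordinate map to be a bijection, the out-of-support levels to be filled in independently, and must tolerate an infinite $\N$-support — and one must carry the translations through the ratio defining $F$ carefully enough that numerator and denominator transform compatibly, the denominator matching being precisely where $\|f\|=\|g\|$ and the support conditions enter. The measurability needed to make the coupling argument rigorous is already provided by claim (i) preceding the lemma.
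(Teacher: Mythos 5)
Your proposal is correct and follows essentially the same route as the paper: the same translation coupling $\zeta_{(\sigma(n),x)} = \eta_{(n,\,x+x_n)}$, the same use of translation invariance of $P$ to match the numerators, and the same matching of denominators via $\|f\|=\|g\|$, followed by passing the a.s.\ identity $\iota(F)=\iota(G)$ to equality of laws. The only cosmetic difference is the final step, where you invoke the converse direction of Lemma \ref{equivalence_classes} (after checking that the $\N$-supports of $F$ and $G$ are $H_f$ and $H_g$), while the paper deduces $d_\alpha(F,G)=0$ directly by restricting the translation to large finite sets and letting the tail sums vanish; both are valid.
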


\begin{proof}
To show that $F$ and $G$ have the same law, it suffices to exhibit a coupling of the environments $\vc\eta$ and $\vc\zeta$ such that $F = G$ in $\s$.
So we let $H_f$ and $H_g$ denote the $\N$-supports of $f$ and $g$, respectively, and take
$\sigma : H_f \to H_g$ and $(x_n)_{n \in H_f}$ as in Lemma \ref{equivalence_classes}.
Define the translation(s) $\psi : H_f \times \Z^d \to H_g \times \Z^d$ by $\psi(n,x) \coloneqq (\sigma(n),x-x_n)$, so that \eqref{better_def} now reads as
\eeq{
f(v) = g(\psi(v)) \quad \text{for all $v \in H_f \times \Z^d$}. \label{f_equals_g}
}

Next we couple the environments.
Let $\zeta_u$ be equal to $\eta_{\psi^{-1}(u)}$ whenever $u \in H_g \times \Z^d$.
Otherwise, we may take $\zeta_u$ to be an independent copy of $\eta_u$.
Now, for any $u = (n,x)$ and $v = (n,y)$ with $n \in H_f$,
\eeq{
\psi(u) - \psi(v) = u - v \quad \Rightarrow \quad P(v,u) &= P(\omega_1 = u - v) \\ &= P(\omega_1 = \psi(u)-\psi(v)) = P(\psi(v),\psi(u)). \label{transitions_equal}
}
Therefore, for any $u \in H_f \times \Z^d$, we have
\eeq{ \label{numerator_same}
\sum_{v \sim u} f(v)e^{\beta \eta_u}P(v,u)
&\stackrel{\mbox{\scriptsize\eqref{f_equals_g},\eqref{transitions_equal}}}{=} \sum_{v \sim u} g(\psi(v))e^{\beta \eta_u}P(\psi(v),\psi(u)) \\
&\stackrel{\phantom{\eqref{f_equals_g},\eqref{transitions_equal}}}{=} \sum_{v \sim \psi(u)} g(v)e^{\beta \zeta_{\psi(u)}}P(v,\psi(u)).
}
On the other hand, for any $u \in (\N \setminus H_f) \times \Z^d$ we have $f(v) = 0$ for every $v \sim u$. 
Similarly, $g(v) = 0$ whenever $u \in (\N \setminus H_g) \times \Z^d$ and $v \sim u$.
Consequently, \eqref{numerator_same} trivially implies
\eeq{
\sum_{w \in \N \times \Z^d} \sum_{v \sim w} f(v) e^{\beta \eta_w}P(v,w)
= \sum_{w \in \N \times \Z^d} \sum_{v \sim w} g(v) e^{\beta \zeta_w}P(v,w). \label{denominator_same}
}
Together, \eqref{numerator_same}, \eqref{denominator_same}, and the fact that $\|f\| = \|g\|$ (see discussion following Lemma \ref{compactness}) show $F(u) = G(\psi(u))$ for all $u \in H_f \times \Z^d$.
Hence
\eq{
\sum_{u \in H_f \times \Z^d} |F(u) - G(\psi(u))| + \sum_{u \notin H_f \times \Z^d} F(u)^\alpha + \sum_{u \notin H_g \times \Z^d} G(u)^\alpha = 0.
}
For any $\eps > 0$, we can find a finite subset $A \subset H_f \times \Z^d$ such that
\eq{
\sum_{u \notin A} F(u)^\alpha + \sum_{u \notin \psi(A)} G(u)^\alpha < \eps.
}
With $\phi \coloneqq \psi\rvert_{A}$ (so $\deg(\phi) = \infty$), we thus have $d_\alpha(F,G) \leq d_{\alpha,\phi}(F,G) < \eps$.
Letting $\eps$ tend to 0 gives the desired result.
\end{proof}

We have now verified that the map $\s \to \p(\s)$ given by $f \mapsto Tf$ is well-defined.
It remains to be seen that the map is measurable, although this fact will be implied by the continuity proved in the next section.
Given measurability, we can naturally lift the update map to a map on measures.
For $\mu \in \p(\s)$, define the mixture
\eeq{
\t\mu(\dd g) \coloneqq \int Tf(\dd g)\, \mu(\dd f), \label{t_on_measures}
}
which means
\eq{
\t\mu(\a) = \int Tf(\a)\ \mu(\dd f), \quad \text{Borel $\a \subset \s$}.
}
More generally,
\eeq{
\int \vphi(g)\ \t\mu(\dd g) = \int_\s \int_\s \vphi(g)\ Tf(\dd g)\, \mu(\dd f) \label{T_fubini}
}
for all measurable functions $\vphi : \s \to \R$ that either are nonnegative or satisfy
\eq{
\int_\s \int_\s |\vphi(g)|\ Tf(\dd g)\,\mu(\dd f) < \infty. 
}
In this notation, we have a map $\p(\s) \to \p(\s)$ given by $\mu \mapsto \t\mu$.
We can recover the map $T$ by restricting to Dirac measures;
that is, $Tf = \t\delta_f$, where $\delta_f \in \p(\s)$ is the unit point mass at $f$.
For our purposes here, it suffices to know that $\mu \mapsto \t\mu$ is continuous, by an argument which requires only the continuity of $f \mapsto Tf$ (see \cite[Appendix B.1]{bates-chatterjee17}).

\subsection{Continuity of update map} \label{continuity}
The goal of this section is to prove the following result.

\begin{prop}[{cf.~\cite[Proposition 3.2]{bates-chatterjee17}}] \label{continuous1}
For any $\eps > 0$, there exists $\delta > 0$ such that for $f,g \in \s$,
\eq{
d_\alpha(f,g) < \delta \quad \Rightarrow \quad \w_\alpha(Tf,Tg) < \eps.
}
\end{prop}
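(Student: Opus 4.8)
The plan is to bound $\w_\alpha(Tf,Tg)$ by coupling the random elements $F,G\in\s$ from \eqref{F_def}--\eqref{G_def} and estimating $\EE[d_\alpha(F,G)]$, which dominates $\w_\alpha(Tf,Tg)$ for any coupling. A feature used throughout is that $d_\alpha$ is bounded by an absolute constant on $\s\times\s$ (e.g.\ $d_\alpha(F,G)\le\|F\|+\|G\|\le2$ via a test isometry with empty domain), so it suffices to make $\EE[d_\alpha(F,G)]$ small and we may freely discard events of small probability; this is what lets us cope with atypically large environment values and atypically small denominators. Two uniform auxiliary estimates come first. First, writing $D_f+c_f$ for the denominator in \eqref{F_def}, one has $\sup_{f\in\s}\EE[(D_f+c_f)^{-1}]<\infty$: if $\|f\|\le\tfrac12$ the constant $c_f=(1-\|f\|)e^{\lambda(\beta)}$ already bounds it below, and if $\|f\|>\tfrac12$ one writes the random part as $\|f\|\sum_w\tilde p_w e^{\beta\eta_w}$ with $\tilde p$ a probability vector and applies Jensen to $t\mapsto t^{-1}$, using $\lambda(-\beta)<\infty$; Markov's inequality then gives $\sup_{f\in\s}\PP(D_f+c_f<\tau)\to0$ as $\tau\to0$. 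Second, since $\sum_xP(\omega_1=x)=1$, the tail $\eps_R:=\sum_{\|x\|_1>R}P(\omega_1=x)$ tends to $0$ as $R\to\infty$, uniformly, because it depends only on $P$; this is the substitute for the finite range of SRW, letting the sums defining $F$ and $G$ be truncated to a bounded window with uniformly small error.

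Given $\eps>0$ I would fix parameters in the order $\gamma\downarrow0$, then $\tau\downarrow0$, then $R\uparrow\infty$, then $\delta\downarrow0$. For $f,g$ with $d_\alpha(f,g)<\delta$, choose representatives in $\s_0$ and a near-optimal isometry $\phi:A\to\N\times\Z^d$ with $d_{\alpha,\phi}(f,g)<\delta$; thus $\sum_{u\in A}|f(u)-g(\phi(u))|<\delta/\alpha$, both tails $\sum_{u\notin A}f(u)^\alpha$ and $\sum_{u\notin\phi(A)}g(u)^\alpha$ lie below $\delta$ (hence $f\le\delta^{1/\alpha}$ off $A$, $g\le\delta^{1/\alpha}$ off $\phi(A)$, and $d_\alpha(f,f|_A),\,d_\alpha(g,g|_{\phi(A)})<\delta$), and $\deg(\phi)>\log_2(1/\delta)$. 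By Lemma \ref{extension}, extend $\phi$ to an isometry $\Phi:A^{(k)}\to\N\times\Z^d$ with $k\approx\tfrac14\log_2(1/\delta)$ and $\deg(\Phi)\ge\tfrac12\log_2(1/\delta)$; $\Phi$ stays injective, so it sends $A^{(k)}\setminus A$ into the complement of $\phi(A)$. Couple the environments as in the proof of Lemma \ref{same_law}: put $\zeta_{\Phi(u)}:=\eta_u$ for $u\in A^{(k)}$, extend $\Phi$ to a bijection of $\N\times\Z^d$, and define $\vc\zeta$ accordingly elsewhere, so $\vc\zeta$ remains an i.i.d.\ copy of $\vc\eta$.

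It then remains to estimate $d_\alpha(F,G)$ on the event that both $D_f+c_f$ and $D_g+c_g$ are at least $\tau$, whose complement has uniformly small probability. On that event I would use as test isometry the restriction of $\Phi$ to the finite set $U:=\{u\in A^{(k)}:F(u)>\gamma\text{ or }G(\Phi(u))>\gamma\}$ (note $|U|\le2/\gamma$ since $F,G$ are subprobability measures), and bound the three parts of $d_{\alpha,\Phi|_U}(F,G)$. The degree part is $2^{-\deg(\Phi)}=o_\delta(1)$. The two tail parts are at most $\gamma^{\alpha-1}$ plus the $\ell^\alpha$-mass of $F$ (resp.\ $G$) not reached by $\Phi$, which is controlled by combining $f\le\delta^{1/\alpha}$ off $A$ with the kernel tail (the portion of $F$ transported more than $k$ from $A$ has total mass $\lesssim\eps_k/\tau$) and, at the few sites where a large atom might survive, absorbing the corresponding rare event (which forces an astronomically large $\eta_u$) into the discarded bad event. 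For the matching part $\alpha\sum_{u\in U}|F(u)-G(\Phi(u))|$, I would compare $N_f(u)=e^{\beta\eta_u}\sum_{v\sim u}f(v)P(v,u)$ with, after reindexing $w=\Phi(v)$ and using $\zeta_{\Phi(u)}=\eta_u$, $N_g(\Phi(u))=e^{\beta\eta_u}\sum_{v\in A^{(k)},\,\Phi(v)\sim\Phi(u)}g(\Phi(v))P(\Phi(v),\Phi(u))$ up to a term $\le e^{\beta\eta_u}\delta^{1/\alpha}$: for $v$ within distance $R\le k$ of $u$ the identity $\Phi(v)-\Phi(u)=v-u$ gives $P(v,u)=P(\Phi(v),\Phi(u))$, while $f(v)$ and $g(\Phi(v))$ differ by an amount summing to $<\delta/\alpha$ over $v\in A$ and at most $2\delta^{1/\alpha}$ pointwise over the $O(R^d)$ remaining $v$; the contribution of $v$ at distance $>R$ on either side is at most $\eps_R e^{\beta\eta_u}$ by the kernel tail. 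Using also that the denominators differ only by the $\ell^\alpha$-negligible discrepancy of the unmatched parts (fluctuation $\lesssim\delta^{1/(2\alpha)}$ by a variance bound needing $\lambda(\pm\alpha\beta)<\infty$) and summing over the at most $2/\gamma$ sites of $U$, one reaches $\EE[d_\alpha(F,G)]\le C(\gamma,\tau)(\delta^{1/\alpha}+\eps_R)+o_\delta(1)$, which is $<\eps$ for the chosen parameters.

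The hard part is precisely the matching term once $P$ is genuinely long-range: $F(u)$ depends on $f(v)$ for arbitrarily distant $v$, so $F$ and $G$ cannot be matched locally, and a large environment value at a site far from $A$ can spawn a large atom of $F$ with no obvious counterpart in $G$. Both difficulties are absorbed by the two-scale picture above --- a radius-$R$ window in which $\Phi$ transports the reference kernel exactly, plus a tail $\eps_R$ that is uniformly small because it is a property of $P$ alone --- together with the bounded-metric device for the rare large-atom events and the moment hypotheses $\lambda(\pm\alpha\beta)<\infty$ for the necessary concentration. Keeping every constant uniform over $f,g\in\s$, with compactness of $\s$ entering only through the denominator estimate, is the part that demands the most care.
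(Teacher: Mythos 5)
Your overall scheme is the same as the paper's: couple the environments through an isometry $\Phi$ obtained by extending a near-optimal isometry for $(f,g)$ to a $k$-enlargement (Lemma \ref{extension}), bound $\EE[d_\alpha(F,G)]$ term by term, replace the finite range of SRW by the kernel tail $P(\|\omega_1\|_1>k)\to0$, and control denominators by a uniform inverse-moment bound (your Jensen argument is exactly Lemma \ref{amgm}). The cosmetic differences (restricting the test isometry to the random set $U$ of $\gamma$-atoms, taking $A$ to be the domain of the near-optimal isometry rather than a set capturing all but $\kappa$ of the $\ell^1$-mass of $f$, aiming directly at uniformity instead of continuity-plus-compactness) are workable.

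The genuine gap is the step where you compare the two denominators. First, the unmatched parts of $\wt F$ and $\wt G$ are \emph{not} ``$\ell^\alpha$-negligible'' contributions: with your choice of $A$ one only knows $\sum_{v\notin A}f(v)^\alpha<\delta$, so the unmatched $\ell^1$-mass, and hence the unmatched random sums $\sum_{u\notin A^{(k)}}\wt f(u)$ and $\sum_{u\notin\Phi(A^{(k)})}\wt g(u)$, can be of order $e^{\lambda(\beta)}$. What saves the day is that these sums concentrate around means which cancel exactly against the compensators $(1-\|f\|)e^{\lambda(\beta)}$ and $(1-\|g\|)e^{\lambda(\beta)}$ (this is why the paper writes $\wt F-\wt G$ as $\sum_u c_u X_u+\sum_u c_u'X_u'+\sum_u c_u''X_u''$ with centered $X=e^{\beta\eta}-e^{\lambda(\beta)}$, small sup-norm coefficients and $\ell^1$-norm at most $1$); your proposal needs to make this mean-cancellation explicit, since under your setup $\|f\|$ and $\|g\|$ themselves need not be close. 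Second, and more seriously, your quantitative control of the fluctuation, ``$\lesssim\delta^{1/(2\alpha)}$ by a variance bound needing $\lambda(\pm\alpha\beta)<\infty$,'' does not hold under \eqref{mgf_assumption}: a variance bound on $e^{\beta\eta}$ requires $\lambda(2\beta)<\infty$, whereas $\alpha$ must be taken with $\alpha\beta<\beta_{\max}$ and is typically (and when $\beta>\beta_{\max}/2$, necessarily) less than $2$, so second exponential moments are exactly the hypothesis this paper is removing. The paper's substitute is Lemma \ref{LLN}: truncate $X$ at a fixed level, apply Azuma--Hoeffding to the bounded centered parts, and control the truncation error in $L^1$; this needs only $\lambda(\pm\beta)<\infty$ and is precisely the ``hard'' new ingredient your outline is missing. (A von Bahr--Esseen $\alpha$-th moment inequality, $\EE|\sum_i c_iX_i|^\alpha\le 2\sum_i|c_i|^\alpha\,\EE|X|^\alpha$ for $1<\alpha\le2$, would be a legitimate alternative consistent with $\lambda(\pm\alpha\beta)<\infty$, but that is a different inequality from the variance/Chebyshev computation your rate $\delta^{1/(2\alpha)}$ reflects, and it must be applied to the \emph{centered} sums described above, not to the raw unmatched mass.) Until this concentration step is replaced by an argument valid under \eqref{mgf_assumption}, the proof does not go through in the regime the proposition is meant to cover.
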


The proof will proceed in a similar manner as in the nearest-neighbor random walk case, although modification is necessary to account for the fact that now the set of ``neighbors" may be arbitrarily large, even all of $\Z^d$.
In preparation, we record the following results.

\begin{lemma}[{\cite[Lemma 3.3]{bates-chatterjee17}}] \label{amgm}
Let $X_1,X_2,\dots$ be i.i.d.~copies of a positive random variable $X$.
If $c_1,c_2,\dots$ are nonnegative constants 
 satisfying $C \leq \sum_{i = 1}^\infty c_i < \infty $ for a positive constant $C$, then
\eq{
\EE\Bigg[\bigg(\sum_{i=1}^\infty c_i X_i\bigg)^{-p}\Bigg] \leq C^{-p}\, \EE(X^{-p}) \quad \text{for any $p > 0$.}
}
\end{lemma}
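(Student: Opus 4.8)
The plan is to reduce the entire statement to the convexity of $t \mapsto t^{-p}$ on $(0,\infty)$ together with a single application of Jensen's inequality. First I would set $S := \sum_{i=1}^\infty c_i$, which by hypothesis satisfies $0 < C \le S < \infty$, and introduce the weights $p_i := c_i/S$, which form a probability distribution on the index set. Writing $\sum_i c_i X_i = S\sum_i p_i X_i$ and invoking Jensen's inequality for the convex function $t \mapsto t^{-p}$ against the weights $(p_i)$, one obtains the pointwise bound
\eq{
\Big(\sum_i c_i X_i\Big)^{-p} = S^{-p}\Big(\sum_i p_i X_i\Big)^{-p} \le S^{-p}\sum_i p_i X_i^{-p}.
}

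Next I would take expectations and use Tonelli's theorem, legitimate because every term is nonnegative, to exchange $\EE$ with the sum over $i$. Since the $X_i$ are identically distributed and $\sum_i p_i = 1$, this yields
\eq{
\EE\Big[\Big(\sum_i c_i X_i\Big)^{-p}\Big] \le S^{-p}\sum_i p_i\, \EE(X_i^{-p}) = S^{-p}\,\EE(X^{-p}) \le C^{-p}\,\EE(X^{-p}),
}
where the last step uses $S \ge C$, hence $S^{-p} \le C^{-p}$. If $\EE(X^{-p}) = \infty$ the asserted inequality is vacuous, so there is no loss in assuming it finite.

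The only point that deserves a few words is the justification of Jensen's inequality for a countably infinite convex combination of a function that is unbounded and may equal $+\infty$ at $0$. This can be settled either by first applying the finite-dimensional Jensen inequality to the truncated and renormalized weights $p_i^{(N)} := c_i/\sum_{j\le N} c_j$ for $i \le N$ and then letting $N \to \infty$ by monotone convergence, or simply by observing that all quantities involved are well defined in $[0,\infty]$ and that Jensen's inequality holds verbatim for probability measures on a countable set. I do not expect any genuine obstacle here; the lemma is essentially a bookkeeping of convexity, and the argument above is complete up to these standard measure-theoretic verifications.
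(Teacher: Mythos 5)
Your argument is correct, but it follows a different route from the one behind the paper's statement: the lemma is quoted from \cite[Lemma 3.3]{bates-chatterjee17} (note the label \emph{amgm} in the source), where the proof runs through the weighted arithmetic--geometric mean inequality, namely $\sum_i c_i X_i \geq S\prod_i X_i^{c_i/S}$ with $S=\sum_i c_i$, followed by factoring $\EE\bigl[\prod_i X_i^{-p c_i/S}\bigr]=\prod_i \EE\bigl[X_i^{-p c_i/S}\bigr]$ using \emph{independence} and then bounding each factor by $(\EE X^{-p})^{c_i/S}$ via Jensen for concave powers. You instead apply Jensen directly to the convex map $t\mapsto t^{-p}$ with the weights $p_i=c_i/S$, getting $\bigl(\sum_i p_i X_i\bigr)^{-p}\leq \sum_i p_i X_i^{-p}$, and then only use Tonelli and the fact that the $X_i$ are identically distributed. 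The comparison is instructive: your pointwise bound is weaker than the AM--GM bound (the geometric mean of the $X_i^{-p}$ is dominated by their arithmetic mean), but after taking expectations both yield exactly $S^{-p}\EE(X^{-p})\leq C^{-p}\EE(X^{-p})$; on the other hand, your proof never uses independence, so it establishes the inequality for any identically distributed positive family, a (mild) strengthening of the stated lemma. Your handling of the infinite convex combination is also fine; indeed the truncation you sketch needs no limit theorem at all, since $\EE\bigl[\bigl(\sum_i c_iX_i\bigr)^{-p}\bigr]\leq \EE\bigl[\bigl(\sum_{i\leq N} c_iX_i\bigr)^{-p}\bigr]\leq S_N^{-p}\,\EE(X^{-p})$ for every $N$ with $S_N:=\sum_{i\leq N}c_i>0$, and letting $N\to\infty$ gives the claim.
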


\begin{lemma} \label{LLN}
Let $X_1,X_2,\dots$  be i.i.d.~copies of an integrable, centered random variable $X$.
For any $t > 0$, there exists $b > 0$ such that whenever $c_1,c_2,\dots$ are constants satisfying $|c_i| \leq b$ for all $i$ and $\sum_{i = 1}^\infty|c_i| \leq 1$, then
\eeq{
\EE\bigg|\sum_{i = 1}^\infty c_i X_i\bigg| \leq t. \label{LLN_to_show}
}
\end{lemma}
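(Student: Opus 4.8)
The plan is a truncation argument that splits $X$ into a bounded ``bulk'' part, handled by an $L^2$ orthogonality estimate exploiting the smallness of each $c_i$, and a ``heavy tail'' part, handled by an $L^1$ estimate exploiting $\sum_i|c_i|\le 1$ together with integrability.

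First I would note that under the hypotheses the series $\sum_i c_i X_i$ converges absolutely a.s.\ and in $L^1$, since $\sum_i|c_i|\,\EE|X_i| = \EE|X|\sum_i|c_i| \le \EE|X| < \infty$; hence the left-hand side of \eqref{LLN_to_show} is well-defined. Next, by integrability and dominated convergence, choose $M = M(t) < \infty$ with $\EE\bigl(|X|\,\one_{\{|X|>M\}}\bigr) < t/4$, and decompose each $X_i = Y_i + Z_i$ where $Y_i := X_i\one_{\{|X_i|\le M\}} - \EE\bigl(X_1\one_{\{|X_1|\le M\}}\bigr)$ and $Z_i := X_i\one_{\{|X_i|>M\}} - \EE\bigl(X_1\one_{\{|X_1|>M\}}\bigr)$. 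Because $\EE X_1 = 0$, this is a genuine decomposition, and $(Y_i)$, $(Z_i)$ are each i.i.d.\ and centered. For the tail part, the triangle inequality and $\sum_i|c_i|\le 1$ give $\EE\bigl|\sum_i c_iZ_i\bigr| \le \sum_i|c_i|\,\EE|Z_1| \le 2\,\EE\bigl(|X|\one_{\{|X|>M\}}\bigr) < t/2$. For the bulk part, $|Y_i| \le 2M$ so $\EE Y_1^2 \le 4M^2$, and since distinct $Y_i$ are orthogonal in $L^2$ one gets $\EE\bigl(\sum_i c_iY_i\bigr)^2 = \EE(Y_1^2)\sum_i c_i^2 \le 4M^2\sum_i c_i^2 \le 4M^2\, b\sum_i|c_i| \le 4M^2 b$, whence $\EE\bigl|\sum_i c_iY_i\bigr| \le 2M\sqrt{b}$ by Cauchy--Schwarz. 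Taking $b := \min\{1,\, t^2/(16M^2)\}$ makes the bulk contribution at most $t/2$, and adding the two bounds yields $\EE\bigl|\sum_i c_iX_i\bigr| \le t$.

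I do not expect a serious obstacle here: the argument is a routine uniform-integrability estimate. The only points deserving care are (i) justifying the various infinite-sum manipulations simultaneously --- absolute $L^1$ convergence of $\sum_i c_iX_i$ and $\sum_i c_iZ_i$, and $L^2$ convergence of $\sum_i c_iY_i$, all of which follow from $\sum_i|c_i|\le 1$ and the moment bounds on the truncated variables --- and (ii) observing that the resulting $b$ depends only on $t$ and the law of $X$ (through $M$), not on the sequence $(c_i)$, because the bulk estimate sees $(c_i)$ only via $\sum_i c_i^2 \le b$.
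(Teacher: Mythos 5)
Your proof is correct, and the skeleton (truncate at a level depending only on $t$ and the law of $X$, recenter, use $\sum_i|c_i|\le 1$ for the tail and the smallness of $\sum_i c_i^2\le b\sum_i|c_i|\le b$ for the bulk) matches the paper's. Where you diverge is in the key estimate for the bounded, centered part: the paper invokes the Azuma--Hoeffding inequality to get an exponential tail bound $\PP\bigl(\bigl|\sum_i c_i(\bar X_i-\EE\bar X)\bigr|>t/4\bigr)\le 2\exp\bigl(-t^2/(128(L+1)^2 b)\bigr)$ and then converts this into an $L^1$ bound using the deterministic bound $\bigl|\sum_i c_i(\bar X_i-\EE\bar X)\bigr|\le L+1$, whereas you compute the variance directly via orthogonality of the i.i.d.\ centered truncations and finish with Cauchy--Schwarz, $\EE\bigl|\sum_i c_iY_i\bigr|\le \bigl(\EE Y_1^2\sum_i c_i^2\bigr)^{1/2}\le 2M\sqrt{b}$. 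Your route is more elementary (second moments only, no martingale concentration) and yields a cleaner, explicit choice $b=\min\{1,t^2/(16M^2)\}$; the paper's route is heavier machinery than needed here, though the exponential concentration it provides would be the natural tool if one wanted stronger-than-$L^1$ control. Your attention to the two points you flag --- convergence of the split series and the fact that $b$ depends only on $t$ and $\mathfrak{L}_\eta$ through $M$, not on $(c_i)$ --- is exactly what is needed for the way the lemma is applied in the continuity proof, so nothing is missing.
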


\begin{proof}
Fix $t > 0$ and choose $L$ sufficiently large that if $\bar X \coloneqq X\one_{\{|X|\leq L\}}$, then
\eeq{ \label{truncation_1}
\EE|X-\bar X| \leq \frac{t}{4} \wedge 1.
}
In particular,
\eeq{ \label{truncation_2}
|\EE(\bar X)| = |\EE(\bar X - X)| \leq \EE|\bar X - X| \leq \frac{t}{4} \wedge 1.
}
Given $t$ and $L$, let $b>0$ be sufficiently small that
\eeq{
e^{-t^2/(128(L+1)^2b)} \leq \frac{t}{8(L+1)}. \label{exponential_bound}
}
As in the hypothesis, assume $|c_i| \leq b$ for all $i$, and $\sum_{i=1}^\infty |c_i| \leq 1$.
Consider that
\eeq{ \label{first_triangle}
\EE\bigg|\sum_{i = 1}^\infty c_i X_i\bigg|
= \EE\bigg|\sum_{i = 1}^\infty c_i\bar X_i + c_i(X-\bar X_i)\bigg|
&\stackrel{\phantom{\eqref{truncation_1}}}{\leq} \EE\bigg|\sum_{i = 1}^\infty c_i\bar X_i\bigg|
+ \sum_{i=1}^\infty |c_i|\EE|X-\bar X| \\
&\stackrel{\mbox{\scriptsize\eqref{truncation_1}}}{\leq}  \EE\bigg|\sum_{i = 1}^\infty c_i\bar X_i\bigg|+ \frac{t}{4}.
}
In order to apply martingale inequalities, we recenter the remaining sum:
\eeq{ \label{second_triangle}
 \EE\bigg|\sum_{i = 1}^\infty c_i\bar X_i\bigg|
 \leq  \EE\bigg|\sum_{i = 1}^\infty c_i(\bar X_i-\EE(\bar X))\bigg| + \sum_{i=1}^\infty |c_i\EE(\bar X)| \stackrel{\mbox{\scriptsize\eqref{truncation_2}}}{\leq} \EE\bigg|\sum_{i = 1}^\infty c_i(\bar X_i-\EE(\bar X))\bigg| + \frac{t}{4}.
}
For each $i$, the random variable $c_i(\bar X_i - \EE(\bar X))$ has mean $0$ and takes values between $-|c_i|(L+1)$ and $|c_i|(L+1)$.
Therefore, by the Azuma--Hoeffding inequality \cite[Theorem 2.8]{boucheron-lugosi-massart13},
\eq{
\PP\bigg(\bigg|\sum_{i = 1}^\infty c_i(\bar X_i-\EE(\bar X))\bigg| > \frac{t}{4}\bigg) &\leq 2\exp\Big(\frac{-(t/4)^2}{2\sum_{i = 1}^\infty (2|c_i|(L+1))^2}\Big) \\
&= 2\exp\Big(\frac{-t^2}{128(L+1)^2\sum_{i=1}^\infty |c_i|^2}\Big) \\
&\leq 2\exp\Big(\frac{-t^2}{128(L+1)^2b}\Big) \stackrel{\mbox{\scriptsize\eqref{exponential_bound}}}{\leq} \frac{t}{4(L+1)}.
}
Since $|\sum_{i = 1}^\infty c_i(\bar X_i - \EE(\bar X))| \leq L+1$, we conclude
\eeq{ \label{third_triangle}
\EE\bigg|\sum_{i = 1}^\infty c_i(\bar X_i-\EE(\bar X))\bigg| \leq \frac{t}{2}.
}
Together, the inequalities \eqref{first_triangle}--\eqref{third_triangle} give \eqref{LLN_to_show}.
\end{proof}

\begin{proof}[Proof of Proposition \ref{continuous1}]
Since $(\s,d_\alpha)$ is a compact metric space, uniform continuity of $f \mapsto Tf$ will be implied by continuity.
So it suffices to prove continuity at a fixed $f \in \s$.

Let $\eps > 0$ be given.
To prove continuity at $f \in \s$, we need to exhibit $\delta > 0$ such that if $g \in \s$ satisfies $d_\alpha(f,g) < \delta$, then there exist representatives $f,g \in \s_0$ and a coupling of environments $(\vc\eta,\vc\zeta)$ such that under this coupling the following inequality holds:
\eq{
\EE(d_\alpha(F,G)) < \eps.
}
Here $F,G\in\s$ are given by \eqref{F_def} and \eqref{G_def}.
We shall see that it suffices to choose $\delta$ satisfying conditions \eqref{delta_condition_k}--\eqref{delta_condition_min} below.

Let $q \coloneqq \max_{x \in \Z^d} P(0,x)$, which is strictly less than $1$ by \eqref{walk_assumption}.
Next choose $t > 0$ sufficiently small to satisfy one of the following conditions:
\begin{subequations}
\label{t_conditions}
\begin{align}
\frac{3t}{(1-\|f\|)e^{\lambda(\beta)}} &< \frac{\eps}{8\alpha}\phantom{\frac{\eps}{16\alpha}} \qquad \text{if $\|f\| < 1$,} \label{t_condition_1} \\
\frac{85t}{36}e^{\lambda(-\beta)} &< \frac{\eps}{16\alpha}\phantom{\frac{\eps}{8\alpha}} \qquad \text{if $\|f\| = 1$.} \label{t_condition_2}
\end{align}
\end{subequations}
Given $t$, we can take $b > 0$ as in Lemma \ref{LLN}, and then choose $\kappa > 0$ to satisfy
\begin{subequations}
\label{kappa_conditions}
\begin{align}
\kappa &\leq \frac{1}{7} \label{kappa_condition_1} \\
14\kappa e^{\lambda(\beta)+\lambda(-\beta)} &< \frac{\eps}{8\alpha} \label{kappa_condition_4} \\
7\kappa + (2\kappa)^{1/\alpha} &\leq b \label{kappa_condition_2} \\
7\kappa(1-q)^{-1}e^{\lambda(-\beta)}\EE|e^{\beta\eta}-e^{\lambda(\beta)}| &<\frac{\eps}{16\alpha}\label{kappa_condition_3} \\
3 \cdot 2^{\alpha} e^{\lambda(\alpha\beta)+\lambda(-\alpha\beta)}\kappa &< \frac{\eps}{4}. \label{kappa_condition_5}
\end{align}
\end{subequations}
Then fix a representative $f \in \s_0$, and choose $A \subset \N \times \Z^d$ finite but sufficiently large that
\eeq{
\sum_{u \notin A} f(u) < \kappa. \label{A_condition}
}
By possibly omitting some elements, we may assume $f$ is strictly positive on $A$.
Next, let $k$ be a positive integer sufficiently large that
\begin{subequations}
\label{k_conditions}
\begin{align}
2^{-k} &< \frac{\eps}{4} \label{k_condition_1} \\
P(\|\omega_1\|_1 > k) &< \kappa. \label{k_condition_2}
\end{align}
\end{subequations}

Now choose $\delta > 0$ such that
\begin{subequations}
\label{delta_conditions}
\begin{align}
  \delta &< 2^{-3k}. \label{delta_condition_k}
\intertext{If $A$ is nonempty, we will also demand that}
(2d)^k|A|\delta^{1/\alpha} &< \kappa. \label{delta_condition_A}
\intertext{Otherwise, we relax this assumption to}
  \delta^{1/\alpha} &< \kappa. \label{delta_condition_noA}
\intertext{Finally, we assume}
\delta^{1/\alpha} &< \inf_{u \in A} f(u). \label{delta_condition_min}
\end{align}
\end{subequations}
We claim this $\delta > 0$ is sufficient for $\eps > 0$ in the sense described above.

Assume $g \in \s$ satisfies $d_\alpha(f,g) < \delta$.
Then given any representative $g \in \s_0$, there exists an isometry $\psi : C \to \N \times \Z^d$ such that
\eeq{
d_{\alpha,\psi}(f,g) = \alpha\sum_{u \in C} |f(u)-g(\psi(u))| + \sum_{u\notin C}f(u)^\alpha + \sum_{u\notin \psi(C)}g(u)^\alpha + 2^{-\deg(\psi)} < \delta. \label{psi_condition}
}
By \eqref{delta_condition_k}, it follows that $\deg(\psi) > 3k$. 
In particular, upon defining $\phi \coloneqq \psi|_A$, we have $\deg(\phi) \geq \deg(\psi) > 3k$.
Therefore, Lemma \ref{extension} guarantees that $\phi$ can be extended to an isometry $\Phi : A^{(k)} \to \N \times \Z^d$ with $\deg(\Phi) > k$, where
\eq{
A^{(k)} = \{v \in \N \times \Z^d: \|u - v\|_1 \leq k \text{ for some $u \in A$}\}.
}
Alternatively, we can express this set as
\eq{
A^{(k)} = \bigcup_{u \in A} \n(u,k), \qquad \n(u,k) \coloneqq \{v \in \N \times \Z^d: \|u - v\|_1 \leq k\}.
}
The inequality $\deg(\Phi) > k$ implies that for any $B \subset A$,
\eeq{ \label{degree_consequence}
\Phi(B^{(k)}) = \Phi(B)^{(k)} \coloneqq \bigcup_{u \in B} \n(\Phi(u),k).
}
Indeed,
\eq{
u \in B^{(k)} \quad \Leftrightarrow \quad \exists\, v \in B \cap \n(u,k) \quad
\Leftrightarrow \quad \exists\, v \in B, \Phi(v) \in \n(\Phi(u),k) \quad
\Leftrightarrow \quad \Phi(u) \in \Phi(B)^{(k)}.
}
Furthermore, \eqref{psi_condition} and \eqref{delta_condition_min} together force $A \subset C$, since
\eq{
u \in A \quad \Rightarrow \quad f(u) \geq \inf_{u \in A} f(u) > \delta^{1/\alpha}
\quad \Rightarrow \quad f(u)^\alpha > \delta > d_\psi(f,g) \quad \Rightarrow \quad u \in C.
}

Given a random environment $\vc\eta$, we couple to it $\vc\zeta$ in the following way.
If $u \in \Phi(A^{(k)})$, then set $\zeta_u = \eta_{\Phi^{-1}(u)}$.
Otherwise, let $\zeta_u$ be an independent copy of $\eta_u$.
Note that $F$ and $G$ are now distributed as $Tf$ and $Tg$, respectively, when mapped to $\s$.
On the other hand, $\Phi$ is deterministic, and so no measurability issues arise in the bound
\eq{
\EE(d_\alpha(F,G)) \leq \EE(d_{\alpha,\Phi}(F,G)),
}
where by definition
\eeq{ \label{d_Phi_def}
d_{\alpha,\Phi}(F,G) = \delta \sum_{u \in A^{(k)}} |F(u)-G(\Phi(u))| + \sum_{u \notin A^{(k)}} F(u)^\alpha + \sum_{u \notin \Phi(A^{(k)})} G(u)^\alpha + 2^{-\deg(\Phi)}.
}
It thus suffices to show $\EE(d_{\alpha,\Phi}(F,G)) < \eps$.
To simplify notation, we will write
\eq{
\wt{f}(u) &= \sum_{v \sim u} f(v)e^{\beta \eta_u}P(v,u) & \wt{F} &= \sum_{w \in \N \times \Z^d} \wt{f}(w) + (1 - \|f\|)e^{\lambda(\beta)} \\
\wt{g}(u) &= \sum_{v \sim u} g(v)e^{\beta \zeta_u}P(v,u) & \wt{G} &= \sum_{w \in \N \times \Z^d} \wt{g}(w) + (1 - \|g\|)e^{\lambda(\beta)}
}
so that $F(u) = \wt{f}(u)/\wt{F}$ and $G(u) = \wt{g}(u)/\wt{G}$.

Consider the first of the four terms on the right-hand side of \eqref{d_Phi_def}.
Observe that for $u \in A^{(k)}$,
\eeq{
|F(u) - G(\Phi(u))| = \bigg|\frac{\wt{f}(u)}{\wt{F}} - \frac{\wt{g}(\Phi(u))}{\wt{G}}\bigg| 
&\leq \bigg|\frac{\wt{f}(u)}{\wt{F}} - \frac{\wt{g}(\Phi(u))}{\wt{F}}\bigg| + \bigg|\frac{\wt{g}(\Phi(u))}{\wt{F}}-\frac{\wt{g}(\Phi(u))}{\wt{G}}\bigg|  \\
&= \frac{|\wt{f}(u) - \wt{g}(\Phi(u))|}{\wt{F}} + \frac{\wt{g}(\Phi(u))}{\wt{G}}\cdot\bigg|\frac{\wt{G}}{\wt{F}} - 1\bigg|. \label{two_parts}
}
Summing over $A^{(k)}$ and taking expectation, we obtain the following from the first term in the last line of \eqref{two_parts}:
\eeq{ \label{first_part}
\EE\Bigg[\sum_{u \in A^{(k)}} \frac{|\wt{f}(u) - \wt{g}(\Phi(u))|}{\wt{F}}\Bigg]
&= \sum_{u \in A^{(k)}} \EE\bigg[\frac{|\wt{f}(u) - \wt{g}(\Phi(u))|}{\wt{F}}\bigg].
}
Meanwhile, the second term in \eqref{two_parts} gives
\eeq{ \label{second_part}
\EE\Bigg[\sum_{u \in A^{(k)}} \frac{\wt{g}(\Phi(u))}{\wt{G}}\cdot\bigg|\frac{\wt{G}}{\wt{F}}-1\bigg|\Bigg]
\leq \EE\, \bigg|\frac{\wt{G}}{\wt{F}}-1\bigg|
= \EE\bigg[ \frac{|\wt{G}-\wt{F}|}{\wt{F}}\bigg].
}
These preliminary steps suggest two quantities we should control from above, namely the right-hand sides of \eqref{first_part} and \eqref{second_part}.
Consideration of the second and third terms on the right-hand side of \eqref{d_Phi_def} suggests four more quantities,
since the Harris--FKG inequality yields
\eeq{ \label{to_bound_2}
\EE\bigg[\sum_{u \notin A^{(k)}} F(u)^\alpha\bigg]
\leq \EE(\wt F^{-\alpha})\EE\bigg[\sum_{u \notin A^{(k)}} \wt f(u)^\alpha\bigg],
}
and similarly
\eeq{ \label{to_bound_3}
\EE\bigg[\sum_{u \notin \Phi(A^{(k)})} G(u)^\alpha\bigg]
\leq \EE(\wt G^{-\alpha})\EE\bigg[\sum_{u \notin \Phi(A^{(k)})} \wt g(u)^\alpha\bigg].
}
Therefore, we should seek an upper bound for $\EE(\wt F^{-\alpha})$ and $\EE(\wt G^{-\alpha})$, as well as $\EE\big[\sum_{u \notin A^{(k)}} \wt f(u)^\alpha\big]$ and $\EE\big[\sum_{u \notin \Phi(A^{(k)})} \wt g(u)^\alpha\big]$.
For clarity of presentation, we divide our task into the next four subsections.

\subsubsection{Upper bound for $\EE(\wt{F}^{-\alpha})$ and $\EE(\wt{G}^{-\alpha})$}
If $\|f\| < 1$, then
\eeq{ \label{Fcase_1}
\wt F \geq (1-\|f\|)e^{\lambda(\beta)}  \quad \Rightarrow \quad
\EE(\wt F^{-\alpha}) &\leq (1-\|f\|)^{-\alpha}e^{-\alpha\lambda(\beta)} \leq (1-\|f\|)^{-\alpha}e^{\lambda(-\alpha\beta)}.
}
On the other hand,
\eq{
\sum_{u \in \N \times \Z^d} \sum_{v \sim u} f(v)P(v,u) = \sum_{v \in \N \times \Z^d} \sum_{u \sim v} f(v)P(v,u) = \sum_{v \in \N \times \Z^d} f(v) = \|f\|,
}
and so if $\|f\| > 0$, then Lemma \ref{amgm} gives
\eeq{
\EE\, \wt{F}^{-\alpha}
\leq \EE\Bigg[\bigg(\sum_{u \in \N \times \Z^d} \sum_{v \sim u} f(v)e^{\beta \eta_u}P(v,u)\bigg)^{-\alpha}\Bigg]
\leq \|f\|^{-\alpha} \cdot \EE(e^{-\alpha\beta \eta}) = \|f\|^{-\alpha} e^{\lambda(-\alpha\beta)}. \label{Fcase_2}
}
Viewing \eqref{Fcase_1} and \eqref{Fcase_2} together, we have unconditionally that
\eeq{
\EE(\wt{F}^{-\alpha}) \leq \Big[\max_{t \in [0,1]} \min((1-t)^{-\alpha},t^{-\alpha})\Big]e^{\lambda(-\alpha\beta)}
= 2^\alpha e^{\lambda(-\alpha\beta)}. \label{Fbound}
}
The exact same argument shows
\eeq{
\EE(\wt G^{-\alpha}) \leq 2^\alpha e^{\lambda(-\alpha\beta)}. \label{Gbound}
}

\subsubsection{Upper bound for $\sum_{u \in A^{(k)}} \EE(|\wt{f}(u) - \wt{g}(\Phi(u))|/\wt F)$}

Observe that for $u \in A^{(k)}$,
\eq{
|\wt f(u) - \wt g(\Phi(u))|
&= \bigg| \sum_{v \sim u} f(v)e^{\beta \eta_u}P(v,u) - \sum_{v \sim \Phi(u)} g(v)e^{\beta \zeta_{\Phi(u)}}P(v,\Phi(u))\bigg| \\
&= e^{\beta\eta_u}\bigg|\sum_{v \sim u} f(v)P(v,u) - \sum_{v \sim \Phi(u)} g(v)P(v,\Phi(u))\bigg|,
}
meaning $|\wt f(u) - \wt g(\Phi(u))|$ is a non-decreasing function of $\eta_u$ and independent of $\eta_w$ for $w \neq u$.
Since $\wt F^{-1}$ is a non-increasing function of all $\eta_w$, the Harris--FKG inequality yields
\eeq{ \label{initial_fkg}
\sum_{u \in A^{(k)}} \EE\bigg[\frac{|\wt{f}(u) - \wt{g}(\Phi(u))|}{\wt F}\bigg]
\leq \EE(\wt F^{-1}) \sum_{u \in A^{(k)}} \EE|\wt f(u) - \wt g(\Phi(u))|,
}
where
\eeq{  \label{fixed_u}
 \sum_{u \in A^{(k)}}\EE|\wt f(u) - \wt g(\Phi(u))|
&\leq e^{\lambda(\beta)} \sum_{u \in A^{(k)}}\bigg(\bigg|\sum_{v \in \n(u,k)} f(v)P(v,u) - \sum_{v \in \n(\Phi(u),k)} g(v)P(v,\Phi(u))\bigg| \\
&\phantom{=e^{\beta\eta_u}\bigg[}+\bigg|\sum_{\substack{v \sim u \\ v \notin \n(u,k)}} f(v)P(v,u) - \sum_{\substack{v \sim \Phi(u) \\ v \notin \n(\Phi(u),k)}} g(v)P(v,\Phi(u))\bigg|\bigg).
}
Of the two absolute values above, the second is easier to control.
Indeed,
\eeq{ \label{easier_one}
&\sum_{u \in A^{(k)}} \bigg|\sum_{\substack{v \sim u \\ v \notin \n(u,k)}} f(v)P(v,u) - \sum_{\substack{v \sim \Phi(u) \\ v \notin \n(\Phi(u),k)}} g(v)P(v,\Phi(u))\bigg| \\
&\stackrel{\phantom{\eqref{k_condition_2}}}{\leq} \sum_{v \in \N \times \Z^d} \sum_{\substack{u \sim v \\ u \notin \n(v,k)}} (f(v)+g(v))P(v,u) 
\stackrel{\mbox{\scriptsize\eqref{k_condition_2}}}{<} \sum_{v \in \N \times \Z^d} (f(v)+g(v))\kappa
\leq 2\kappa.
}
Next consider the first absolute value in the final line of \eqref{fixed_u}. 
The difference between the two sums can be bounded as
\eeq{ \label{individual_u}
&\bigg|\sum_{v \in \n(u,k)} f(v)P(v,u) - \sum_{v \in \n(\Phi(u),k)} g(v)P(v,\Phi(u))\bigg| \\
&\leq \bigg|\sum_{v \in \n(u,k) \cap A} f(v)P(v,u)-\sum_{v \in \n(\Phi(u),k) \cap \psi(A)} g(v)P(v,\Phi(u))\bigg| + \sum_{v \in \n(u,k) \setminus A} f(v)P(v,u) \\
&\phantom{\leq}+ \sum_{v \in \n(\Phi(u),k) \cap \psi(C\setminus A)} g(v)P(v,\Phi(u)) + \sum_{v \in \n(\Phi(u),k) \setminus \psi(C)} g(v)P(v,\Phi(u)).
}
Each of the four terms above can be controlled separately.
For the first term, notice that because $\deg(\Phi) > k$,
\eq{
v \in \n(u,k) \cap A \quad \Rightarrow \quad u - v = \Phi(u) - \Phi(v) = \Phi(u) - \psi(v).
}
That is, if $v \in \n(u,k) \cap A$, then $\psi(v) \in \n(\Phi(u),k) \cap \psi(A)$ and satisfies
\eq{
P(\psi(v),\Phi(u)) = P(\omega_1 = \Phi(u) - \psi(v)) = P(\omega_1 = u - v) = P(v,u).
}
Conversely,
\eq{
\psi(v) \in \n(\Phi(u),k) \cap \psi(A) \quad \Rightarrow \quad \Phi(u)-\psi(v) = \Phi(u)-\Phi(v) = u - v.
}
That is, if $\psi(v) \in \n(\Phi(u),k) \cap \psi(A)$, then $v \in \n(u,k) \cap A$ and satisfies
\eq{
P(v,u) = P(\omega_1 = u - v) = P(\omega_1 = \Phi(u)-\psi(v)) = P(\psi(v),\Phi(u)).
}
We thus have a bijection between $\n(u,k) \cap A$ and $\n(\Phi(u),k) \cap \psi(A)$, meaning
\eq{
\bigg|\sum_{v \in \n(u,k) \cap A} f(v)P(v,u)-\sum_{v \in \n(\Phi(u),k) \cap \psi(A)} g(v)P(v,\Phi(u))\bigg|
&= \bigg|\sum_{v \in \n(u,k) \cap A} \big[f(v) - g(\psi(v))\big]P(v,u)\bigg| \\
&\leq \sum_{v \in \n(u,k) \cap A} |f(v) - g(\psi(v))|P(v,u) \\
&\leq \sum_{v \in A}|f(v) - g(\psi(v))|P(v,u) \\
&\leq \sum_{v \in C}|f(v) - g(\psi(v))|P(v,u),
}
where the final inequality is trivial since $A \subset C$.
Summing over $u \in A^{(k)}$ gives the total bound
\eeq{ \label{all_u_1}
&\sum_{u \in A^{(k)}} \bigg|\sum_{v \in \n(u,k) \cap A} f(v)P(v,u)-\sum_{v \in \n(\Phi(u),k) \cap \psi(A)} g(v)P(v,\Phi(u))\bigg| \\
&\leq \sum_{u \in A^{(k)}} \sum_{v \in C} |f(v)-g(\psi(v))|P(v,u) \\
&= \sum_{v \in C} \sum_{u \in A^{(k)}} |f(v) - g(\psi(v))|P(v,u) 
\leq \sum_{v \in C} |f(v) - g(\psi(v))|
\leq d_{\alpha,\psi}(f,g) \stackrel{\mbox{\scriptsize\eqref{psi_condition}}}{<} \delta \stackrel{\mbox{\scriptsize\eqref{delta_condition_noA}}}{<} \kappa.
}
Considering the second term on the right-hand side of \eqref{individual_u}, we have
\eeq{ \label{all_u_2}
\sum_{u \in A^{(k)}} \sum_{v \in \n(u,k) \setminus A} f(v)P(v,u)
\leq \sum_{v \notin A} \sum_{u \in \n(v,k)} f(v)P(v,u)
\leq \sum_{v \notin A} f(v) \stackrel{\mbox{\scriptsize\eqref{A_condition}}}{<} \kappa.
}
Similarly, for the third term,
\eeq{ \label{all_u_3}
&\sum_{u \in A^{(k)}} \sum_{v \in \n(\Phi(u),k) \cap \psi(C\setminus A)} g(v)P(v,\Phi(u)) \\
&\leq \sum_{u \in A^{(k)}} \sum_{v \in \n(\Phi(u),k) \cap \psi(C\setminus A)} \big[|f(\psi^{-1}(v))-g(v)| + f(\psi^{-1}(v))\big]P(v,\Phi(u)) \\
&\leq \sum_{v \in C \setminus A} \sum_{u \in \n(\psi(v),k)}\big[|f(v)-g(\psi(v))| + f(v)\big]P(\psi(v),u) \\
&\leq \sum_{v \in C} |f(v)-g(\psi(v))| + \sum_{v \notin A} f(v)
\stackrel{\mbox{\scriptsize\eqref{A_condition}}}{\leq} d_{\alpha,\psi}(f,g) + \kappa \stackrel{\mbox{\scriptsize\eqref{psi_condition}}}{<} \delta + \kappa
\stackrel{\mbox{\scriptsize\eqref{delta_condition_noA}}}{<} 2\kappa.
}
Finally, for the fourth term,
\eeq{ \label{all_u_4}
\sum_{u \in A^{(k)}} \sum_{v \in \n(\Phi(u),k) \setminus \psi(C)} g(v)P(v,\Phi(u))
&= \sum_{u \in A^{(k)}} \sum_{v \in \n(\Phi(u),k) \setminus \psi(C)} g(v)P(0,\Phi(u)-v) \\
&\leq\sum_{u \in A^{(k)}} \sup_{v \notin \psi(C)} g(v) \\
&\leq (2d)^k|A| \bigg(\sum_{v \notin \psi(C)} g(v)^\alpha\bigg)^{1/\alpha} \\
&\leq (2d)^k|A| d_{\alpha,\psi}(f,g)^{1/\alpha} \stackrel{\mbox{\scriptsize\eqref{psi_condition}}}{<} (2d)^k|A|\delta^{1/\alpha} \stackrel{\mbox{\scriptsize\eqref{delta_condition_A}}}{<} \kappa.
}
Combining \eqref{individual_u}--\eqref{all_u_4}, we arrive at
\eeq{
\sum_{u \in A^{(k)}} \bigg|\sum_{v \in \n(u,k)} f(v)P(v,u) - \sum_{v \in \n(\Phi(u),k)} g(v)P(v,\Phi(u))\bigg| < 5\kappa. \label{all_u}
}
Using \eqref{easier_one} and \eqref{all_u} with \eqref{fixed_u} reveals 
\eq{
\sum_{u \in A^{(k)}} \EE|\wt f(u) - \wt g(\Phi(u))| \leq 7\kappa e^{\lambda(\beta)},
}
and thus we have the desired bound:
\eeq{ \label{part_2_bound}
\sum_{u \in A^{(k)}} \EE\bigg[\frac{|\wt{f}(u) - \wt{g}(\Phi(u))|}{\wt F}\bigg]
&\stackrel{\mbox{\scriptsize\eqref{initial_fkg}}}{\leq} \EE(\wt F^{-1}) \sum_{u \in A^{(k)}} \EE|\wt f(u) - \wt g(\Phi(u))| \\
&\stackrel{\mbox{\scriptsize\eqref{Fbound}}}{\leq} 14\kappa e^{\lambda(-\beta)}e^{\lambda(\beta)}
 \stackrel{\mbox{\scriptsize\eqref{kappa_condition_4}}}{<} \frac{\eps}{8\alpha}.
}

\subsubsection{Upper bound for $\EE(|\wt{G}-\wt{F}|/\wt{F})$}

First write the trivial equality
\eq{
\wt F - \wt G = (\wt F - e^{\lambda(\beta)}) - (\wt G - e^{\lambda(\beta)}),
}
and then observe that
\eeq{ \label{F_minus}
\wt F - e^{\lambda(\beta)} &= \sum_{u \in \N \times \Z^d} \wt{f}(u) - \|f\|e^{\lambda(\beta)} \\
&= \sum_{u \in \N \times \Z^d} \sum_{v \sim u} f(v) e^{\beta \eta_u}P(v,u) - \sum_{u \in \N \times \Z^d} \sum_{v \sim u} f(v)P(v,u) e^{\lambda(\beta)} \\
&= \sum_{u \in \N \times \Z^d} \sum_{v \sim u} f(v)\big[e^{\beta \eta_u} - e^{\lambda(\beta)}\big]P(v,u) \\
&= \sum_{u \in A^{(k)}} \sum_{v \sim u} f(v)\big[e^{\beta \eta_u} - e^{\lambda(\beta)}\big]P(v,u)
+ \sum_{u \notin A^{(k)}} \sum_{v \sim u} f(v)\big[e^{\beta \eta_u} - e^{\lambda(\beta)}\big]P(v,u).
}
Similarly,
\eq{
\wt{G} - e^{\lambda(\beta)}
&= \sum_{u \in \Phi(A^{(k)})} \sum_{v \sim u} g(v)\big[e^{\beta \zeta_u} - e^{\lambda(\beta)}\big]P(v,u) + \sum_{u \notin \Phi(A^{(k)})} \sum_{v \sim u} g(v)\big[e^{\beta \zeta_u} - e^{\lambda(\beta)}\big]P(v,u) \\
&= \sum_{u \in A^{(k)}} \sum_{v \sim \Phi(u)} g(v)\big[e^{\beta \eta_u} - e^{\lambda(\beta)}\big]P(v,\Phi(u)) + \sum_{u \notin \Phi(A^{(k)})} \sum_{v \sim u} g(v)\big[e^{\beta \zeta_u} - e^{\lambda(\beta)}\big]P(v,u).
}
Hence
\eq{
\wt{F} - \wt{G}
&= \sum_{u \in A^{(k)}}\bigg( \sum_{v \sim u} f(v)P(v,u) - \sum_{v \sim \Phi(u)} g(v)P(v,\Phi(u))\bigg)\big[e^{\beta \eta_u} - e^{\lambda(\beta)}\big] \\
&\phantom{=|} + \sum_{u \notin A^{(k)}} \sum_{v \sim u} f(v)\big[e^{\beta \eta_u} - e^{\lambda(\beta)}\big]P(v,u)
- \sum_{u \notin \Phi(A^{(k)})} \sum_{v \sim u} g(v)\big[e^{\beta \zeta_u} - e^{\lambda(\beta)}\big]P(v,u).
}
In the notation of Lemma \ref{LLN}, the following random variables are i.i.d.: 
\eq{
X_{u} &\coloneqq e^{\beta \eta_u} - e^{\lambda(\beta)}, \quad u \in A^{(k)} \\
X'_{u} &\coloneqq e^{\beta \eta_u} - e^{\lambda(\beta)}, \quad u \notin A^{(k)} \\
X''_{u} &\coloneqq e^{\beta \zeta_u} - e^{\lambda(\beta)}, \quad u \notin \Phi(A^{(k)}).
} 
Their coefficients are
\eq{
c_{u} &\coloneqq \sum_{v \sim u} f(v)P(v,u) - \sum_{v \sim \Phi(u)} g(v)P(v,\Phi(u)), \quad u \in A^{(k)}\\
c'_{u} &\coloneqq \sum_{v \sim u} f(v)P(v,u), \quad u \notin A^{(k)} \\
c''_{u} &\coloneqq -\sum_{v \sim u} g(v)P(v,u), \quad u \notin \Phi(A^{(k)}).
}
That is,
\begin{align}
\wt F - \wt G &= \sum_{u \in A^{(k)}} c_u X_u + \sum_{u \notin A^{(k)}} c_u'X_u' + \sum_{u \notin \Phi(A^{(k)})} c_u''X_u'' \nonumber \\
\Rightarrow \quad |\wt F - \wt G| &\leq \bigg|\sum_{u \in A^{(k)}} c_u X_u\bigg|
+ \bigg|\sum_{u \notin A^{(k)}} c'_u X_u'\bigg|
+ \bigg|\sum_{u \notin \Phi(A^{(k)})} c''_u X_u''\bigg|. \label{tilde_difference_1} 
\end{align}
Assume we can show the following inequalities:
\begin{align} 
\sum_{u \in A^{(k)}} |c_u|  \leq 7\kappa &\stackrel{\mbox{\scriptsize\eqref{kappa_condition_1}},\mbox{\scriptsize\eqref{kappa_condition_2}}}{\leq}  b \wedge 1 \label{c_sum} \\
c'_u < 2\kappa &\stackrel{\hspace{3ex}\eqref{kappa_condition_2}\hspace{3ex}}{\leq} b  \quad \text{for all $u \notin A^{(k)}$} \label{c_prime} \\
c''_u < (2\kappa)^{1/\alpha} + \kappa &\stackrel{\hspace{3ex}\eqref{kappa_condition_2}\hspace{3ex}}{\leq} b \quad \text{for all $u \notin \Phi(A^{(k)})$.} \label{c_prime_prime}
\end{align}
We also have the trivial inequalities
\eq{ 
\sum_{u \notin A^{(k)}} c_u' = \sum_{u \notin A^{(k)}} \sum_{v \sim u} f(v)P(v,u)
\leq \sum_{v \in \N \times \Z^d} \sum_{u \sim v} f(v)P(v,u)
= \|f\| \leq 1,
}
and similarly
\eq{ 
\sum_{u \notin \Phi(A^{(k)})} |c_u''| \leq \|g\| \leq 1.
}
Therefore, by the choice of $b$ in relation to Lemma \ref{LLN}, we deduce from \eqref{tilde_difference_1} that
\eeq{ \label{numerator_bound}
\EE|\wt F - \wt G| \leq 3t.
}

There are now two cases to consider.
First, if $\|f\| < 1$, then
\eeq{ \label{part_3_bound_1}
\EE\Big(\frac{|\wt F - \wt G|}{\wt F}\Big) \leq \frac{\EE|\wt F - \wt G|}{(1-\|f\|)e^{\lambda(\beta)}}
\leq \frac{3t}{(1-\|f\|)e^{\lambda(\beta)}} \stackrel{\mbox{\scriptsize\eqref{t_condition_1}}}{<} \frac{\eps}{8\alpha}.
}
On the other hand, if $\|f\| = 1$, then we can consider the three sums in \eqref{tilde_difference_1} separately.
For any $u \in A^{(k)}$, the quantity
\eq{
\wt F_u \coloneqq \wt F - \sum_{v \sim u} f(v)e^{\beta\eta_u} = \sum_{w \neq u} \sum_{v \sim w} f(v)e^{\beta \eta_w}P(v,w)
}
is independent of $X_u$.
Since
\eq{
\sum_{w \neq u} \sum_{v \sim w} f(v)P(v,w) = \sum_{v}f(v)\sum_{\substack{w \sim v \\ w \neq u}}P(v,w)
= \sum_{v}f(v)(1-P(v,u))
\geq \|f\|(1-q) = (1-q),
}
Lemma \ref{amgm} guarantees
\eq{
\EE(\wt F_u^{-1}) \leq (1-q)^{-1}e^{\lambda(-\beta)}.
}
Hence
\eeq{ \label{equals1_sum1}
\EE\bigg(\frac{\big|\sum_{u \in A^{(k)}} c_u X_u\big|}{\wt F}\bigg)
\leq \sum_{u \in A^{(k)}} |c_u|\EE\Big(\frac{|X_u|}{\wt F_u}\Big)
&\stackrel{\phantom{\eqref{c_sum}}}{=} \sum_{u \in A^{(k)}} |c_u|\EE|X_u|\EE(\wt F_u^{-1}) \\
&\stackrel{\mbox{\scriptsize\eqref{c_sum}}}{\leq} 7\kappa(1-q)^{-1}e^{\lambda(-\beta)}\EE|e^{\beta\eta}-e^{\lambda(\beta)}|.
}
We must also have
\eq{
\sum_{u \in A^{(k)}} \sum_{v \sim u} f(v) P(v,u)
\geq \sum_{v \in A} \sum_{u \in \n(v,k)} f(v) P(v,u) 
\stackrel{\mbox{\scriptsize\eqref{k_condition_2}}}{>} (1-\kappa)\sum_{v \in A} f(v) \stackrel{\mbox{\scriptsize\eqref{A_condition}}}{>} (1-\kappa)^2,
}
and so by applying Lemma \ref{amgm} once more, we see
\eeq{ \label{equals1_sum2}
\EE\bigg[\bigg(\sum_{u \in A^{(k)}} \sum_{v \sim u} f(v)e^{\beta \eta_u}P(v,u)\bigg)^{-1}\bigg] \leq (1-\kappa)^{-2}e^{\lambda(-\beta)}.
}
Again appealing to independence and then Lemma \ref{LLN}, we obtain the bound
\eeq{ \label{equals1_sum3}
\EE\bigg(\frac{\big|\sum_{u \notin A^{(k)}} c_u'X_u'\big|}{\wt F}\bigg)
&\leq \EE\bigg(\frac{\big|\sum_{u \notin A^{(k)}} c_u'X_u'\big|}{\sum_{u \in A^{(k)}} \sum_{v \sim u} f(v)e^{\beta \eta_u}P(v,u)}\bigg) \\
&=\EE\bigg|\sum_{u \notin A^{(k)}} c_u'X_u'\bigg|\EE\bigg[\bigg(\sum_{u \in A^{(k)}} \sum_{v \sim u} f(v)e^{\beta \eta_u}P(v,u)\bigg)^{-1}\bigg]
\leq t(1-\kappa)^{-2}e^{\lambda(-\beta)}.
}
Finally, we use independence and Lemma \ref{LLN} once more to obtain
\eq{
\EE\bigg(\frac{\big|\sum_{u \notin \Phi(A^{(k)})} c_u''X_u''\big|}{\wt F}\bigg)
= \EE\bigg|\sum_{u \notin \Phi(A^{(k)})} c_u''X_u''\bigg|\EE(\wt F^{-1})
\stackrel{\mbox{\scriptsize\eqref{Fcase_2}}}{\leq} te^{\lambda(-\beta)}.
}
Combining \eqref{equals1_sum1}--\eqref{equals1_sum3}, we conclude that if $\|f\| = 1$, then
\eeq{ \label{part_3_bound_2}
\EE\Big(\frac{|\wt F - \wt G|}{\wt F}\Big) 
&\stackrel{\phantom{\eqref{kappa_condition_1},\eqref{kappa_condition_3}}}{\leq} 7\kappa(1-q)^{-1}e^{\lambda(-\beta)}\EE|e^{\beta\eta}-e^{\lambda(\beta)}| + t(1-\kappa)^{-2}e^{\lambda(-\beta)} + te^{\lambda(-\beta)} \\
&\stackrel{\phantom{xxx}\mbox{\scriptsize{\eqref{kappa_condition_1}}}\phantom{xxx}}{<} 7\kappa(1-q)^{-1}e^{\lambda(-\beta)}\EE|e^{\beta\eta}-e^{\lambda(\beta)}| + \frac{85t}{36}e^{\lambda(-\beta)} \\
&\stackrel{\mbox{\scriptsize\eqref{kappa_condition_3},\eqref{t_condition_2}}}{<} \frac{\eps}{8\alpha}.
}
Having established \eqref{part_3_bound_1} and \eqref{part_3_bound_2}, we may write
\eeq{ \label{part_3_bound}
\EE\Big(\frac{|\wt F - \wt G|}{\wt F}\Big) < \frac{\eps}{8\alpha},
}
irrespective of the value of $\|f\|$.

Now we check the inequalities \eqref{c_sum}--\eqref{c_prime_prime}.
First, for the $c_u$,
\eq{
\sum_{u \in A^{(k)}} |c_u| 
&\leq \sum_{u \in A^{(k)}} \bigg|\sum_{v \in \n(u,k)} f(v)P(v,u) - \sum_{v \in \n(\Phi(u),k)} g(v)P(v,\Phi(u))\bigg| \\
&\phantom{\leq}+ \sum_{u \in A^{(k)}}\bigg|\sum_{\substack{v\sim u \\ v \notin \n(u,k)}} f(v)P(v,u) - \sum_{\substack{v \sim \Phi(u) \\ v \notin \n(\Phi(u),k)}} g(v)P(v,\Phi(u))\bigg|,
}
where the first sum is bounded by $5\kappa$ according to $\eqref{all_u}$,
and the second sum satisfies
\eq{
\sum_{u \in A^{(k)}}\bigg|\sum_{\substack{v\sim u \\ v \notin \n(u,k)}} f(v)P(v,u) - \sum_{\substack{v \sim \Phi(u) \\ v \notin \n(\Phi(u),k)}} g(v)P(v,\Phi(u))\bigg|
&\stackrel{\phantom{\eqref{k_condition_2}}}{\leq} \sum_{u \in \N\times\Z^d}\sum_{\substack{v\sim u \\ v \notin \n(u,k)}} (f(v)+g(v))P(v,u) \\
&\stackrel{\phantom{\eqref{k_condition_2}}}{=}\sum_{v \in \N\times\Z^d}\sum_{\substack{u\sim v \\ u \notin \n(v,k)}} (f(v)+g(v))P(v,u) \\
&\stackrel{\mbox{\scriptsize\eqref{k_condition_2}}}{<}\sum_{v\in\N\times\Z^d} (f(v)+g(v))\kappa \leq 2\kappa.
}
Hence \eqref{c_sum} holds:
\eq{
\sum_{u \in A^{(k)}} |c_k| \leq 5\kappa + 2\kappa = 7\kappa.
}
Next consider the $c_u'$.
By definition of $A^{(k)}$,
\eeq{ \label{notinAk_implication}
u \notin A^{(k)} \quad \Rightarrow \quad \n(u,k) \subset (\N \times \Z^d) \setminus A,
}
which implies
\eq{
c'_u &=  \sum_{v \in \n(u,k)} f(v)P(v,u) + \sum_{\substack{v \sim u \\ v \notin \n(u,k)}} f(v)P(v,u) \\
&\leq \sum_{v \notin A} f(v)P(v,u) + P(\|\omega_1\|_1 > k)
\stackrel{\mbox{\scriptsize\eqref{k_condition_2}}}{<} \sum_{v \notin A} f(v) + \kappa
\stackrel{\mbox{\scriptsize\eqref{A_condition}}}{<} 2\kappa < 7\kappa.
}
Last, consider the $c_u''$.
We have the implication
\eeq{ \label{notinPhiAk_implication}
u \notin \Phi(A^{(k)}) \stackrel{\mbox{\scriptsize\eqref{degree_consequence}}}{=} \Phi(A)^{(k)} \quad \Rightarrow \quad \n(u,k) \subset (\N\times\Z^d)\setminus \Phi(A),
}
and thus
\eeq{ \label{c_prime_prime_setup}
|c_u''| &= \sum_{v \in \n(u,k)}g(v)P(v,u) + \sum_{\substack{v\sim u \\ v \notin \n(u,k)}} g(v)P(v,u) \\
&\leq \sum_{v \notin \Phi(A)} g(v)P(v,u) + \sum_{\substack{v\sim u \\ v \notin \n(u,k)}} g(v)P(0,u-v) 
\leq \sup_{v \notin \Phi(A)} g(v) + P(\|\omega_1\|_1 > k).
}
A further bound is needed:
\eeq{ \label{notinPhiA_sum}
\sum_{v \notin \Phi(A)} g(v)^\alpha
&\leq \sum_{v \notin \psi(C)} g(v)^\alpha + \sum_{v \in \psi(C \setminus A)} g(v) \\
&= \sum_{v \notin \psi(C)} g(v)^\alpha + \sum_{v \in C \setminus A} g(\psi(v)) \\
&\leq \sum_{v \notin \psi(C)} g(v)^\alpha + \sum_{v \in C \setminus A} |f(v)-g(\psi(v))| + \sum_{v \in C \setminus A} f(v) \\
&\leq \sum_{v \notin \psi(C)} g(v)^\alpha + \sum_{v \in C} |f(v) - g(\psi(v))| + \sum_{v \notin A} f(v) \\
&\stackrel{\mbox{\scriptsize\eqref{A_condition}}}{<} d_{\alpha,\psi}(f,g) + \kappa 
\stackrel{\mbox{\scriptsize\eqref{psi_condition}}}{<} \delta + \kappa
\stackrel{\mbox{\scriptsize\eqref{delta_condition_noA}}}{<} 2\kappa.
}
In particular,
\eq{
\sup_{v \in \Phi(A)} g(v) \leq \bigg(\sum_{v \notin \Phi(A)} g(v)^\alpha\bigg)^{1/\alpha} < (2\kappa)^{1/\alpha},
}
which completes \eqref{c_prime_prime_setup} to yield \eqref{c_prime_prime}:
\eq{ 
|c_u''|
\leq \sup_{v \notin \Phi(A)} g(v) + P(\|\omega_1\|_1 > k)
\stackrel{\mbox{\scriptsize\eqref{k_condition_2}}}{\leq} (2\kappa)^{1/\alpha} + \kappa.
}

\subsubsection{Upper bound for $\EE\big[\sum_{u \notin A^{(k)}} \wt f(u)^\alpha\big]$ and $\EE\big[\sum_{u \notin \Phi(A^{(k)})} \wt g(u)^\alpha\big]$}
By definition,
\eq{
\EE\bigg[\sum_{u \notin A^{(k)}} \wt f(u)^\alpha\bigg]
= \sum_{u \notin A^{(k)}} \EE\bigg(\sum_{v \sim u} f(v)e^{\beta\eta_u}P(v,u)\bigg)^\alpha
= e^{\lambda(\alpha\beta)}\sum_{u \notin A^{(k)}} \bigg(\sum_{v\sim u} f(v)P(v,u)\bigg)^\alpha.
}
Since the last inner sum is at most 1, we have
\eq{
\EE\bigg[\sum_{u \notin A^{(k)}} \wt f(u)^\alpha\bigg]
\leq e^{\lambda(\alpha\beta)}\sum_{u \notin A^{(k)}} \sum_{v\sim u} f(v)P(v,u),
}
where
\eq{
\sum_{u \notin A^{(k)}} \sum_{v\sim u} f(v)P(v,u)
&\stackrel{\phantom{\eqref{notinAk_implication}}}{=} \sum_{u \notin A^{(k)}}\bigg[ \sum_{v\in \n(u,k)} f(v)P(v,u) + \sum_{\substack{v \sim u \\ v \notin \n(u,k)}} f(v)P(v,u)\bigg] \\
&\stackrel{\mbox{\scriptsize\eqref{notinAk_implication}}}{\leq} \sum_{v \notin A} \sum_{u \sim v} f(v)P(v,u) + \sum_{v \in \N \times \Z^d} \sum_{\substack{u \sim v \\ u \notin \n(v,k)}} f(v)P(v,u) \\
&\stackrel{\phantom{\eqref{notinAk_implication}}}{=} \sum_{v \notin A} f(v) + \sum_{v \in \N \times \Z^d} f(v)P(\|\omega_1\|_1 > k)
\stackrel{\mbox{\scriptsize\eqref{A_condition},\eqref{k_condition_2}}}{<} 2\kappa.
}
Combining the two previous two displays yields
\eeq{
\EE\bigg[\sum_{u \notin A^{(k)}} \wt f(u)^\alpha\bigg] < 2\kappa e^{\lambda(\alpha\beta)}. \label{part_4_bound1}
}
For $\wt g$, Jensen's inequality gives
\eq{
\EE\bigg[\sum_{u \notin \Phi(A^{(k))}} \wt g(u)^\alpha\bigg]
= e^{\lambda(\alpha\beta)} \sum_{u \notin \Phi(A^{(k)})} \bigg(\sum_{v \sim u} g(v)P(v,u)\bigg)^\alpha
\leq e^{\lambda(\alpha\beta)} \sum_{u \notin \Phi(A^{(k)})} \bigg(\sum_{v \sim u} g(v)^\alpha P(v,u)\bigg),
}
where
\eq{
 \sum_{u \notin \Phi(A^{(k)})} \bigg(\sum_{v \sim u} g(v)^\alpha P(v,u)\bigg)
 &\stackrel{\phantom{\eqref{k_condition_2}}}{=}  \sum_{u \notin \Phi(A^{(k)})}\bigg[\sum_{v \in \n(u,k)} g(v)^\alpha P(v,u) + \sum_{\substack{v \sim u \\ v \notin \n(u,k)}} g(v)^\alpha P(v,u)\bigg] \\
 &\stackrel{\mbox{\scriptsize\eqref{notinPhiAk_implication}\phantom{b}}}{\leq} \sum_{v \notin \Phi(A)}\sum_{u \in \n(u,k)} g(v)^\alpha P(v,u) + \sum_{v \in \N \times \Z^d}\sum_{\substack{u \sim v \\ u \notin \n(v,k)}} g(v)^\alpha P(v,u) \\
 &\stackrel{\mbox{\scriptsize\eqref{k_condition_2}}}{<} \sum_{v \notin \Phi(A)}g(v)^\alpha + \kappa
 \stackrel{\mbox{\scriptsize\eqref{notinPhiA_sum}}}{<} 3\kappa.
}
In summary,
\eeq{ \label{part_4_bound2}
\EE\bigg[\sum_{u \notin \Phi(A^{(k))}} \wt g(u)^\alpha\bigg] < 3\kappa e^{\lambda(\alpha\beta)}.
}
\subsubsection{Completing the proof}
Once \eqref{part_2_bound} and \eqref{part_3_bound} are used in \eqref{first_part} and \eqref{second_part}, respectively, \eqref{two_parts} gives
\eeq{ \label{final_sum_1}
\EE\bigg[\alpha\sum_{u \in A^{(k)}} |F(u)-G(\Phi(u))|\bigg] < \alpha\Big(\frac{\eps}{8\alpha}+\frac{\eps}{8\alpha}\Big) = \frac{\eps}{4}.
}
Next, \eqref{Fbound} and \eqref{part_4_bound1} make \eqref{to_bound_2} read as
\eeq{ \label{final_sum_2}
\EE\bigg[\sum_{u \notin A^{(k)}} F(u)^\alpha\bigg] < 2^{\alpha+1} e^{\lambda(\alpha\beta)+\lambda(-\alpha\beta)}\kappa \stackrel{\mbox{\scriptsize\eqref{kappa_condition_5}}}{<} \frac{\eps}{4}.
}
Similarly, \eqref{Gbound} and \eqref{part_4_bound2} translate \eqref{to_bound_3} to
\eeq{ \label{final_sum_3}
\EE\bigg[\sum_{u \notin \Phi(A^{(k))}} G(u)^\alpha\bigg] < 3\cdot 2^{\alpha}e^{\lambda(\alpha\beta)+\lambda(-\alpha\beta)}\kappa \stackrel{\mbox{\scriptsize\eqref{kappa_condition_5}}}{<} \frac{\eps}{4}.
}
Finally,
\eeq{ \label{final_sum_4}
\deg(\Phi) > k \quad {\Rightarrow} \quad 2^{-\deg(\Phi)} < 2^{-k} \stackrel{\mbox{\scriptsize\eqref{k_condition_1}}}{<} \frac{\eps}{4}.
}
Together, \eqref{final_sum_1}--\eqref{final_sum_4} produce the desired result:
\eq{
\EE(d_{\alpha,\Phi}(F,G)) < \eps.
}
\end{proof}

\section{Variational formula for free energy} \label{varitional_formula_section}
Given the results in Sections \ref{free_energy_background} and \ref{adaptation_update_map}, there is little difficulty left in obtaining the main results.
Indeed, the majority of remaining proofs go through as in \cite{bates-chatterjee17} with no change.
The main reason for this is the high level of abstraction in our approach, the essential components of which are $\s$ and $\t$.
Since $\t$ can be thought of as a continuous transition kernel for a Markov chain in a compact space $\s$, general ergodic theory provides swift access to results on Ces\`aro limits.
Furthermore, the high and low temperature phases can be characterized using the variational formula \eqref{variational_formula} of Theorem \ref{free_energy_thm}.

\subsection{Outline of abstract methods} \label{outline_methods}
Let us now recall a progression of results, with appropriate references to \cite{bates-chatterjee17}.
Whenever modification is necessary, an updated proof is provided in Section \ref{updated_proofs}.
Recall from \eqref{fn_def} that $f_n$ is the quenched endpoint distribution of the polymer at length $n$, identified as an element of $\s$.
\subsubsection{Step 1} Define the empirical measure
\eeq{
\mu_n \coloneqq \frac{1}{n+1} \sum_{i = 0}^{n} \delta_{f_i}, \label{mu_n_def}
}
which is a random element of $\p(\s)$, measurable with respect to $\f_n$.
Since $Tf_n$ is the law of $f_{n+1}$ given $\f_n$, a martingale argument shows that $\mu_n$ almost surely converges to the set of fixed points of $\t$, which we denote
\eeq{ \label{K_def}
\k \coloneqq \{\nu \in \p(\s): \t\nu = \nu\}.
}
More precisely, if for $\mu \in \p(\s)$ and $\u \subset \p(\s)$ we define
\eq{
\w_\alpha(\mu,\u) \coloneqq \inf_{\nu \in \u} \w_\alpha(\mu,\nu),
}
then 
\eeq{ \label{converge_to_K}
\lim_{n \to \infty} \w_\alpha(\mu_n,\k) = 0 \quad \mathrm{a.s.}
}
This is the content of \cite[Sections 4.1 and 4.2]{bates-chatterjee17}, from which all the proofs remain valid with $\t$ defined for a general $P$.

\subsubsection{Step 2}
As in \cite[Section 4.3]{bates-chatterjee17}, define the energy functional $R : \s \to \R$ by
\eeq{
R(f) \coloneqq \EE \log \bigg(\sum_{u \in \N \times \Z^d} \sum_{v \sim u} f(v)e^{\beta \eta_u}P(v,u) + (1-\|f\|)e^{\lambda(\beta)}\bigg). \label{r_def}
}
That $R$ is well-defined and continuous is the content of \cite[Lemma 4.5]{bates-chatterjee17}; the proof there relies only on facts we have already adapted in the course of proving Proposition \ref{continuous1}.
When $f = f_n$ is the $n$-th endpoint distribution, \eqref{ratio_equality} shows
\eq{
R(f_n) = \EE\givenk[\Big]{\log \frac{Z_{n+1}}{Z_n}}{\f_n}.
}
Therefore,
\eq{
\EE(R(f_n)) = \EE\Big(\log \frac{Z_{n+1}}{Z_n}\Big) \quad \Rightarrow \quad
\frac{1}{n}\sum_{i = 0}^{n-1} \EE(R(f_i)) = \frac{1}{n}\sum_{i = 0}^{n-1} \EE\Big( \log \frac{Z_{i+1}}{Z_i}\Big) = \EE(F_n).
}
Upon lifting $R$ to the map $\r : \p(\s) \to \R$ given by
\eeq{
\r(\mu) \coloneqq \int_\s R(f)\ \mu(\dd f), \label{R_def}
}
we obtain a continuous functional on measures by Lemma \ref{portmanteau}.
Furthermore, the above calculation can be rewritten as $\EE(\r(\mu_{n-1})) = \EE(F_n)$.
It is proved in \cite[Proposition 4.6]{bates-chatterjee17} that in fact $\r(\mu_{n-1}) - F_n \to 0$ almost surely as $n \to \infty$, and so Proposition \ref{free_energy_converges} implies
\eeq{
p(\beta) = \lim_{n \to \infty} \r(\mu_n) = \lim_{n \to \infty} \EE(\r(\mu_n)). \label{p_beta_identity}
}
The only part of the proof that requires modification is stated as Lemma \ref{fourth_moment_lemma} in the next section.

\subsubsection{Step 3}
To make use the identity \eqref{p_beta_identity}, we show in Theorem \ref{free_energy_thm} that
\eeq{
\lim_{n \to \infty} \EE(\r(\mu_n)) = \inf_{\nu \in \k} \r(\nu). \label{variational_formula_expectation}
}
The ``$\geq$" direction easily follows from \eqref{converge_to_K} (see \cite[Proposition 4.8]{bates-chatterjee17}), while the reverse inequality requires an argument we make more general in Lemma \ref{upper_bound_lemma}.
Together, \eqref{p_beta_identity} and \eqref{variational_formula_expectation} yield the variational formula for the free energy.

\begin{thm} \label{free_energy_thm}
Assume \eqref{mgf_assumption}. Then
\eeq{
p(\beta) = \inf_{\nu \in \k} \r(\nu). \label{variational_formula}
}
Furthermore,
\eeq{
\lim_{n \to \infty} \w_\alpha(\mu_n,\m) = 0 \quad \mathrm{a.s.}, \label{convergence_to_M}
}
where
\eeq{ \label{M_def}
\m \coloneqq \Big\{\nu_0 \in \k : \r(\nu_0) = \inf_{\nu \in \k} \r(\nu)\Big\}.
}
\end{thm}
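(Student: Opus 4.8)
The plan is to obtain \eqref{variational_formula} by combining the free-energy identity \eqref{p_beta_identity}, which is in hand once Lemma~\ref{fourth_moment_lemma} is established, with the two inequalities $p(\beta)\ge\inf_{\nu\in\k}\r(\nu)$ and $p(\beta)\le\inf_{\nu\in\k}\r(\nu)$; the latter, read through \eqref{p_beta_identity}, is exactly \eqref{variational_formula_expectation}. Once $p(\beta)=\inf_{\nu\in\k}\r(\nu)$ is known, \eqref{convergence_to_M} will follow from \eqref{converge_to_K}, the continuity of $\r$, and compactness of $\p(\s)$, with no further estimates. So the proof reduces to two inequalities, only one of which is substantive.

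For $p(\beta)\ge\inf_{\nu\in\k}\r(\nu)$ I would argue as in \cite[Proposition~4.8]{bates-chatterjee17}. Since $R$ is continuous on the compact space $\s$ it is bounded, and Lemma~\ref{portmanteau} makes $\r$ continuous on $\p(\s)$; also $\k$ is closed, being the fixed-point set of the continuous map $\t$ (continuity of $\t$ follows from Proposition~\ref{continuous1} via the lifting of $T$ discussed in Section~\ref{update_map}). Work on the almost-sure event on which $\w_\alpha(\mu_n,\k)\to0$ and $\r(\mu_n)\to p(\beta)$ both hold, the second by \eqref{p_beta_identity}. By compactness $(\mu_n)$ has a $\w_\alpha$-subsequential limit $\nu$; then $\w_\alpha(\nu,\k)=0$ forces $\nu\in\k$, and continuity of $\r$ gives $\r(\nu)=\lim_n\r(\mu_n)=p(\beta)$, so $p(\beta)=\r(\nu)\ge\inf_{\nu'\in\k}\r(\nu')$. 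For the reverse inequality I would invoke the convexity/optimality principle of point~(ii) of Section~\ref{outline_methods_intro}, which we prove in the more general form of Lemma~\ref{upper_bound_lemma}: a convexity argument shows that the deterministic starting configuration $\delta_{f_0}$, with $f_0$ the point mass at the origin (the actual polymer start), minimizes the limiting expected empirical energy among all starting laws; applied to a $\t$-invariant $\nu\in\k$, for which the chain started from $\nu$ has every endpoint distribution of law $\nu$ and hence expected empirical energy $\r(\nu)$ at every stage, this yields $\r(\nu)\ge p(\beta)$ for all $\nu\in\k$. Combining the two inequalities with \eqref{p_beta_identity} gives \eqref{variational_formula}.

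It remains to prove \eqref{convergence_to_M}. The set $\k$ is a closed subset of the compact metric space $(\p(\s),\w_\alpha)$, hence compact, so the continuous functional $\r$ attains its infimum on $\k$ and $\m\ne\varnothing$; moreover $\m=\k\cap\r^{-1}(\{\inf_{\nu\in\k}\r(\nu)\})$ is closed. Work again on the almost-sure event where $\w_\alpha(\mu_n,\k)\to0$ and $\r(\mu_n)\to p(\beta)$. If $\mu_{n_j}\to\nu$ along a subsequence, then $\nu\in\k$ as above, and $\r(\nu)=\lim_j\r(\mu_{n_j})=p(\beta)=\inf_{\nu'\in\k}\r(\nu')$ by continuity of $\r$ and \eqref{variational_formula}, so $\nu\in\m$. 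Thus every subsequential limit of $(\mu_n)$ lies in $\m$; were $\w_\alpha(\mu_n,\m)$ not to tend to $0$, some subsequence would stay at $\w_\alpha$-distance at least $\eps>0$ from $\m$, yet by compactness it would have a convergent further subsequence whose limit would then lie in $\m$, a contradiction. Hence $\w_\alpha(\mu_n,\m)\to0$ almost surely, which is \eqref{convergence_to_M}.

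The argument just described is soft once the abstract machinery is assembled; the real obstacles are in the two inputs it consumes. The first is the almost-sure statement $\r(\mu_{n-1})-F_n\to0$ underlying \eqref{p_beta_identity}, which in the present generality needs the moment control of Lemma~\ref{fourth_moment_lemma} under the weaker hypothesis \eqref{mgf_assumption} in place of $\lambda(\pm2\beta)<\infty$. The second, and genuinely delicate, step is the inequality $\inf_{\nu\in\k}\r(\nu)\ge p(\beta)$, namely Lemma~\ref{upper_bound_lemma}, whose convexity comparison of starting configurations must be carried through for an arbitrary reference walk $P$. Both draw on the work of Sections~\ref{free_energy_background} and \ref{adaptation_update_map}---the convergence of the free energy, the continuity and measurability of $\t$, and the uniform moment bounds established in the course of proving Proposition~\ref{continuous1}---and with those in hand the variational formula and the convergence \eqref{convergence_to_M} follow from the compact-space ergodic theory of \cite{bates-chatterjee17} with only the cosmetic changes indicated.
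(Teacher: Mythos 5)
Your proposal is correct and follows essentially the same route as the paper: the identity \eqref{p_beta_identity} (resting on the adapted moment bound of Lemma~\ref{fourth_moment_lemma}), the easy inequality via \eqref{converge_to_K}, compactness of $\p(\s)$ and continuity of $\r$, the substantive reverse inequality via Lemma~\ref{upper_bound_lemma} applied to $\t$-invariant laws, and the standard subsequence/compactness argument for \eqref{convergence_to_M}. This matches the paper's proof, which assembles exactly these ingredients and defers the remaining routine steps to \cite{bates-chatterjee17}.
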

We note that \eqref{convergence_to_M} follows from \eqref{variational_formula} exactly as in \cite[Theorem 4.11]{bates-chatterjee17}.

\subsection{Proofs in general setting} \label{updated_proofs}
Once the lemmas of this section are checked, all of the results stated in Section \ref{outline_methods} will be proved.

\subsubsection{Adaptation of a fourth moment bound}
The proof of \cite[Proposition 4.6]{bates-chatterjee17} uses the Burkholder-Davis-Gundy (BDG) inequality \cite[Theorem 1.1]{burkholder-davis-gundy72} applied to a martingale whose differences are of form $W - \EE(W)$,
where $W$ is a random variable defined using a fixed $f \in \s_0$ with $\|f\| = 1$.
Specifically,
\eeq{ \label{random_variable_W}
W \coloneqq \log \bigg(\sum_{u \in \N \times \Z^d} \sum_{v\sim u} f(v)e^{\beta \eta_u}P(v,u)\bigg),
}
where the $\eta_u$ are i.i.d.~random variables with law $\mathfrak{L}_\eta$.
\cite[Lemma 4.7]{bates-chatterjee17}) established that there is a constant not depending on $f$ such that
\eq{
\EE\big[(W-\EE\, W)^4\big] \leq C,
}
thus making the BDG inequality useful, but the proof assumed $\lambda(\pm 2\beta) < \infty$.
Here we make a simple adaptation of the proof that assumes only $\lambda(\pm \beta) < \infty$ and obtains a different value for $C$.

\begin{lemma} \label{fourth_moment_lemma}
Fix $f \in \s_0$ with $\|f\| = 1$.
Define $W$ by \eqref{random_variable_W}.
Then there is a constant $C$ not depending on $f$ such that
\eq{
\EE\big[(W-\EE\, W)^4\big] \leq C.
}
\end{lemma}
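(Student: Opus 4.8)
The plan is to bound $\EE[(W-\EE W)^4]$ by a quantity that depends only on the law $\mathfrak{L}_\eta$ and on $\beta$, not on the particular $f \in \s_0$ with $\|f\|=1$. The starting observation is that $W$ is a function of the i.i.d.\ collection $(\eta_u)$, and changing a single coordinate $\eta_u$ changes $W$ by a controlled amount. Indeed, writing $\wt f(u) := \sum_{v\sim u} f(v)e^{\beta\eta_u}P(v,u)$ and $\wt F := \sum_u \wt f(u)$, so $W = \log \wt F$, if we replace $\eta_u$ by an independent copy $\eta_u'$ (with the corresponding $\wt F'$), then the ratio $\wt F/\wt F'$ lies between $e^{\beta(\eta_u-\eta_u')}$-type bounds; more precisely, $|W - W'| \le \beta|\eta_u - \eta_u'|$ when $\eta_u \ge \eta_u'$ one gets $\wt F \le e^{\beta(\eta_u-\eta_u')}\wt F'$ and symmetrically, so $|W-W'| \le \beta|\eta_u-\eta_u'|$ always. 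This is the key structural input: $W$ has bounded differences in each coordinate with the bound being an integrable (indeed exponentially integrable) random variable.

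With this in hand, the natural tool is the Efron--Stein / martingale-difference approach to centered fourth moments. Enumerate $\N\times\Z^d$ as $u_1,u_2,\dots$ and let $\f_j := \sigma(\eta_{u_1},\dots,\eta_{u_j})$, $D_j := \EE(W\mid \f_j) - \EE(W\mid \f_{j-1})$, so $W - \EE W = \sum_j D_j$ is a sum of martingale differences. The bounded-difference estimate gives $|D_j| \le \EE'(\beta|\eta_{u_j}-\eta'_{u_j}| \mid \f_j) =: \beta Y_j$ where $Y_j := |\eta_{u_j} - m|$ for a suitable centering (or simply $\beta(|\eta_{u_j}|+\EE|\eta|)$), an independent family with $\EE Y_j^4 < \infty$ because $\lambda(\pm\beta)<\infty$ forces all polynomial moments of $\eta$ to be finite. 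Then expand
\[
\EE\Big[\Big(\sum_j D_j\Big)^4\Big] = \sum_j \EE D_j^4 + 3\sum_{i\ne j}\EE(D_i^2 D_j^2),
\]
all cross terms with an odd power of some $D_j$ vanishing by the martingale property (conditioning on the appropriate $\f$). The diagonal term is bounded by $\beta^4\sum_j \EE Y_j^4$ and the off-diagonal term by $3\beta^4(\sum_j \EE Y_j^2)^2$; but $\sum_j \EE Y_j^2$ and $\sum_j \EE Y_j^4$ are \emph{not} finite in general since the index set is infinite. This forces a refinement: one must instead weight $Y_j$ by the ``mass'' $\EE\wt f(u_j)/\EE\wt F = \sum_{v\sim u_j} f(v)P(v,u_j)$, which sums to $\|f\|=1$. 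The correct bounded-difference bound is $|D_j| \le C\beta Y_j \cdot p_j$ where $p_j := \sum_{v\sim u_j}f(v)P(v,u_j)$, because changing $\eta_{u_j}$ only affects $W$ through the term $\wt f(u_j)$ whose relative contribution is comparable to $p_j$ after applying the Harris--FKG / AM-GM bound on $\EE(\wt F^{-1})$ as in Proposition~\ref{continuous1}. Then $\sum_j p_j = 1$, $\sum_j p_j^2 \le 1$, and $\EE[(W-\EE W)^4] \le C\beta^4(\EE Y^4 \sum_j p_j^2 + (\EE Y^2)^2(\sum_j p_j^2)^2) \le C\beta^4(\EE Y^4 + (\EE Y^2)^2)$, a bound independent of $f$.

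I expect the main obstacle to be making rigorous the claim that $|D_j| \lesssim \beta p_j Y_j$ — that is, quantifying how much $W = \log\wt F$ can move when a single $\eta_{u_j}$ is resampled, \emph{with the correct $p_j$ weight}. The crude Lipschitz bound $|W-W'|\le\beta|\eta_{u_j}-\eta'_{u_j}|$ is easy but gives a non-summable series; to recover the $p_j$ factor one writes $\wt F = \wt f(u_j) + \wt F_{u_j}$ with $\wt F_{u_j}$ independent of $\eta_{u_j}$, notes $|W - W'| = |\log(1 + (\wt f(u_j)-\wt f'(u_j))/\wt F_{u_j})|$, and controls $\wt f(u_j)/\wt F_{u_j}$ in expectation using $\EE(\wt F_{u_j}^{-1})\le C e^{\lambda(-\beta)}$ (via Lemma~\ref{amgm}, exactly the estimate already used in the continuity proof) together with $\EE \wt f(u_j) = e^{\lambda(\beta)}p_j$. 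One then absorbs the $e^{\beta\eta_{u_j}}$ factors using H\"older with an exponent $q>1$ chosen so that $q\beta < \beta_{\max}$ (as in the justification of (b) in \eqref{would_like}). Once this weighted bounded-difference estimate is established, the fourth-moment expansion via martingale differences is routine, and the constant $C$ depends only on $\beta$, $\mathfrak{L}_\eta$, and the chosen H\"older exponent — never on $f$.
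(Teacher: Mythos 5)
There is a genuine gap, and it sits exactly where you flagged it. Your pivotal estimate $|D_j|\lesssim \beta p_j Y_j$, with $Y_j$ having only polynomial moments, is not attainable by the route you sketch: to extract the weight $p_j=\sum_{v\sim u_j}f(v)P(v,u_j)$ you must linearize the logarithm, $|W-W^{(j)}|\le p_j\,|e^{\beta\eta_{u_j}}-e^{\beta\eta_{u_j}'}|\,\wt F_{u_j}^{-1}$, and this unavoidably carries the factors $e^{\beta\eta_{u_j}}$ and $\wt F_{u_j}^{-1}$ (the latter moreover survives in $D_j$ as the random, unbounded factor $\EE(\wt F_{u_j}^{-1}\mid\f_{j-1})$, so a deterministic constant $C$ in front of $p_jY_j$ cannot appear). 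Consequently the fourth moments of your increments involve $\EE(e^{4\beta\eta})$ and $\EE(\wt F_{u_j}^{-4})\approx e^{\lambda(-4\beta)}$, and even after interpolating between this bound and the crude Lipschitz bound $\beta|\eta_{u_j}-\eta_{u_j}'|$ (to keep the sum over the infinite index set finite), the off-diagonal terms force products of at least two inverse factors, i.e.\ something like $\lambda(\pm 2\beta)<\infty$. But the standing hypothesis of this lemma is only \eqref{mgf_assumption}, i.e.\ $\lambda(\pm(1+\eps)\beta)<\infty$; the whole point of the adaptation is to dispense with $\lambda(\pm2\beta)<\infty$. Your H\"older device with $q\beta<\beta_{\max}$ can absorb a single $e^{\beta\eta}$ against polynomial factors, but it cannot convert $\EE(e^{4\beta\eta})$ or $\EE(\wt F^{-2})$ into available moments. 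A secondary error: for martingale differences it is not true that all cross terms with an odd power vanish; terms such as $\EE(D_iD_j^3)$ and $\EE(D_iD_kD_j^2)$ with $i,k<j$ survive, so your expansion $\sum_j\EE D_j^4+3\sum_{i\ne j}\EE(D_i^2D_j^2)$ is not an identity (one would need Burkholder/Rosenthal instead), which only worsens the moment requirements.

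For comparison, the paper's proof avoids all of this machinery: it bounds $\EE[(W-\EE W)^4]\le 32\,\EE(W^4)$, uses the elementary inequalities $\log^4 t\le 5t^{-1}$ on $(0,1)$ and $\log^4 t\le 5t$ on $[1,\infty)$, and concludes $\EE(W^4)\le 5\,\EE(\wt F^{-1})+5\,\EE(\wt F)\le 5(e^{\lambda(-\beta)}+e^{\lambda(\beta)})$, where $\EE(\wt F^{-1})\le e^{\lambda(-\beta)}$ comes from Lemma \ref{amgm} as in \eqref{Fcase_2} (using $\|f\|=1$) and $\EE(\wt F)=e^{\lambda(\beta)}$. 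This uses only $\lambda(\pm\beta)<\infty$ and is uniform in $f$ for free. If you want to salvage a concentration-style argument, you would have to work with exponential Efron--Stein or truncation at the level of $\wt F$ itself, but the direct $\log^4$ bound makes that unnecessary.
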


\begin{proof}
We start with the bound
\eq{
\EE\big[(W - \EE\, W)^4\big] &\leq \EE\big[(|W|+|\EE W|)^4\big] \leq 16\big[\EE(W^4) + |\EE W|^4\big] \leq 32\EE(W^4).
}
It thus suffices to show that $\EE(W^4)$ is bounded by a constant not depending on $f$.
To simplify notation, we introduce the variable
\eq{
\wt F \coloneqq \sum_{u \in \N \times \Z^d} \sum_{v \sim u} f(v) e^{\beta \eta_u}P(v,u).
}
We have
\eq{
\EE(W^{4}) &= \EE\Givenp{W^{4}}{\wt F< 1} + \EE\Givenp{W^{4}}{\wt F \geq 1} 
=  \EE\Givenp{\log^{4}(\wt F)}{\wt F < 1} + \EE\Givenp{\log^{4}(\wt F)}{\wt F \geq 1}.
}
Notice that $\log^{4}(t) \leq 5t^{-1}$ for all $t \in (0,1)$, while $\log^{4}(t) \leq 5t$ for all $t \in [1,\infty)$.
Since $\wt F$ is positive, these observations lead to
\eq{
\EE(W^{4}) &\leq 5\EE\Givenp{\wt F^{-1}}{\wt F < 1} + 5\EE\Givenp{\wt F}{\wt F \geq 1}
\leq 5\EE(\wt F^{-1}) + 5\EE(\wt F) 
\leq 5(e^{\lambda(-\beta)} + e^{\lambda(\beta)}),
}
where the final inequality is due to \eqref{Fcase_2}.

\end{proof}

\subsubsection{Adaptation of a free energy inequality}
The proof of \eqref{variational_formula} can be written exactly as the proof of Theorem 4.9 in \cite{bates-chatterjee17}, but it requires the result of the next lemma.
We introduce the boldface notation $\vc 1$ to denote the element of $\s$ having representatives in $\s_0$ of the form
\eq{
f(u) = \begin{cases}
1 &\text{if }u = u_0 \\
0 &\text{otherwise,}
\end{cases} \quad u \in \N\times\Z^d.
}
Similarly, $\vc 0$ will denote the element of $\s$ whose unique representative is the constant zero function.

\begin{lemma} \label{upper_bound_lemma}
For any $f_0 \in \s$ and $n \geq 1$,
\eq{
\sum_{i = 0}^{n-1} \r(\t^i \delta_{f_0}) \geq \EE(\log Z_n),
}
where $\delta_{f_0} \in \p(\s)$ is the unit mass at $f_0$.
Equality holds if and only if $f_0 = \vc{1}$.
\end{lemma}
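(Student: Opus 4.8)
The plan is to identify the sum $\sum_{i=0}^{n-1}\r(\t^i\delta_{f_0})$ with $\EE\log\wt Z_n$, where $\wt Z_n$ plays the role of a ``partition function with initial data $f_0$'', to recognize $\wt Z_n$ as a convex combination of copies of $Z_n$ and a deterministic term, and then to invoke (strict) Jensen's inequality together with the bound $\EE\log Z_n\le n\lambda(\beta)$ from the proof of Lemma \ref{means_converge}. Concretely, let $(f_i)_{i\ge0}$ be the $\s$-valued Markov chain started at $f_0$ and driven by independent environments $\vc\eta^{(1)},\vc\eta^{(2)},\dots$ on $\N\times\Z^d$, so that $\t^i\delta_{f_0}$ is the law of $f_i$ and $\r(\t^i\delta_{f_0})=\EE[R(f_i)]$. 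Fix a representative $f_0\in\s_0$, set $g_0:=f_0$ and $g_i(u):=\sum_{v\sim u}g_{i-1}(v)e^{\beta\eta^{(i)}_u}P(v,u)$, and let $\wt Z_i:=\prod_{j=1}^i D_j$, where $D_j:=\sum_{w}\sum_{v\sim w}f_{j-1}(v)e^{\beta\eta^{(j)}_w}P(v,w)+(1-\|f_{j-1}\|)e^{\lambda(\beta)}$ is the denominator appearing in \eqref{F_def}; then $g_i/\wt Z_i$ is a representative of $f_i$. Since $\vc\eta^{(j)}$ is independent of $f_{j-1}$, the definition \eqref{r_def} of $R$ gives $\EE[\log D_j\mid f_{j-1}]=R(f_{j-1})$, and summing over $j$,
\eq{
\EE\log\wt Z_n=\sum_{j=1}^n\EE\log D_j=\sum_{j=1}^n\EE[R(f_{j-1})]=\sum_{i=0}^{n-1}\r(\t^i\delta_{f_0}).
}

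A short induction on the recursions for $\wt Z_i$ and $g_i$ shows $\wt Z_i=\|g_i\|+(1-\|f_0\|)e^{i\lambda(\beta)}$, and unrolling $g_n$ along paths within each block of $\N\times\Z^d$ gives $\|g_n\|=\sum_{(m,x_0)\in\N\times\Z^d}f_0(m,x_0)\,Z_n^{(m,x_0)}$, where $Z_n^{(m,x_0)}:=\sum_{x_1,\dots,x_n}\prod_{i=1}^n e^{\beta\eta^{(i)}_{(m,x_i)}}P(x_{i-1},x_i)$. The substitution $x_i\mapsto x_i-x_0$ shows each $Z_n^{(m,x_0)}$ has the law of $Z_n$, and $Z_n^{(m,x_0)}$ is independent of $Z_n^{(m',x_0')}$ when $m\neq m'$. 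Hence
\eq{
\wt Z_n=\sum_{(m,x_0)\in\N\times\Z^d}f_0(m,x_0)\,Z_n^{(m,x_0)}+(1-\|f_0\|)e^{n\lambda(\beta)}
}
is a convex combination of the $Z_n^{(m,x_0)}$ and the constant $e^{n\lambda(\beta)}$ (weights summing to $1$). Applying Jensen's inequality to $\log$ and then $\EE\log Z_n\le n\lambda(\beta)$,
\eq{
\sum_{i=0}^{n-1}\r(\t^i\delta_{f_0})=\EE\log\wt Z_n\ \ge\ \|f_0\|\,\EE\log Z_n+(1-\|f_0\|)\,n\lambda(\beta)\ \ge\ \EE\log Z_n.
}

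For the equality statement: if $f_0=\vc{1}$ then $\|f_0\|=1$ and $\wt Z_n\stackrel{d}{=}Z_n$, so equality holds. Conversely, equality forces both inequalities above to be tight. Because $\eta$ is not a.s.\ constant and $\beta>0$, $Z_n$ is non-degenerate for $n\ge1$, so strict Jensen gives $\EE\log Z_n<n\lambda(\beta)$; tightness of the second inequality then forces $\|f_0\|=1$, so $\wt Z_n=\sum_{(m,x_0)}f_0(m,x_0)Z_n^{(m,x_0)}$. Tightness of Jensen in the first inequality then requires all $Z_n^{(m,x_0)}$ with $f_0(m,x_0)>0$ to coincide almost surely. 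Two distinct blocks are excluded by independence, and two distinct points $(m,a)\ne(m,b)$ in a common block are excluded by a routine computation using the non-degeneracy of $\eta$ (e.g.\ $Z_n^{(m,a)}$ and $Z_n^{(m,b)}$ depend differently on $\eta^{(1)}_{(m,\cdot)}$). Hence $f_0$ is a single unit point mass, i.e.\ $f_0=\vc{1}$.

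I expect the main difficulty to lie in the first step, namely making the identity $\sum_i\r(\t^i\delta_{f_0})=\EE\log\wt Z_n$ fully rigorous: this requires consistently choosing representatives along the chain, the measurability and integrability needed to interchange sum and expectation (finiteness of each $\EE|\log D_j|$ following from the computation $\EE[D_j\mid f_{j-1}]=e^{\lambda(\beta)}$, the argument behind \eqref{Fcase_2}, and \cite[Lemma 4.5]{bates-chatterjee17}), and the verification --- via Lemma \ref{same_law} and the discussion after Lemma \ref{compactness} --- that all quantities in play descend to $\s$. The elementary within-block uniqueness claim in the equality case is the other point requiring a small but careful argument.
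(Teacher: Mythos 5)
Your proof is correct and takes essentially the same route as the paper: the same chain $(f_i)$ driven by independent environments, the telescoping identity $\sum_{i=0}^{n-1}\r(\t^i\delta_{f_0})=\EE\log(D_1\cdots D_n)$, the same unrolled representation $D_1\cdots D_n=\sum_{u_0}f_0(u_0)Z_n^{(u_0)}+(1-\|f_0\|)\,\EE(Z_n)$, and then Jensen combined with the annealed bound $\EE(\log Z_n)\le n\lambda(\beta)$, with the same characterization of equality. If anything, your treatment of the ``only if'' direction of the equality case is slightly more explicit than the paper's (which asserts the Jensen equality conditions without spelling out why path sums $Z_n^{(u_0)}$ from distinct starting points cannot coincide almost surely), but this is a difference of detail, not of approach.
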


\begin{proof}
Fix a representative $f_0 \in \s_0$.
Let $(\eta^{(i)}_u)_{u \in \N \times \Z^d}$, $1 \leq i \leq n$, be independent collections of i.i.d.~random variables with law $\mathfrak{L}_\eta$.
For $1 \leq i \leq n$, inductively define $f_i \in \s$ to have representative
\eq{
f_i(u) &= \frac{\sum_{v \sim u} f_{i-1}(v) \exp(\beta \eta^{(i)}_u)P(v,u)}{ \sum_{w \in \N \times \Z^d}\sum_{v \sim w} f_{i-1}(v) \exp(\beta \eta^{(i)}_w)P(v,w) + (1-\|f_{i-1}\|)e^{\lambda(\beta)}} \\
&= \frac{\sum_{v \sim u} f_{i-1}(v) \exp(\beta \eta^{(i)}_u)P(v,u)}{D_i},
}
where
\eeq{ \label{mess0}
D_i \coloneqq \sum_{u_i \in \N \times \Z^d}\sum_{u_{i-1} \sim u_i} f_{i-1}(u_{i-1})\exp(\beta \eta^{(i)}_{u_i})P(u_{i-1},u_{i}) + (1 - \|f_{i-1}\|)e^{\lambda(\beta)}, \quad 1 \leq i \leq n.
}
By construction, the law of $f_i$ is equal to $\t$ applied to the law of $f_{i-1}$.
Upon making the inductive assumption that the law of $f_{i-1}$ is $\t^{i-1}\delta_{f_0}$, we see that the law of $f_i$ is $\t^i \delta_{f_0}$.
By definitions \eqref{r_def} and \eqref{R_def},
\eeq{ \label{mess0_2}
\r(\t^i \delta_{f_0}) &= \EE\log\bigg(\sum_{u_{i+1} \in \N \times \Z^d} \sum_{u_i \sim u_{i+1}} f_i(u_i)\exp(\beta \eta^{(i+1)}_{u_{i+1}})P(u_{i},u_{i+1}) + (1-\|f_i\|)e^{\lambda(\beta)}\bigg) \\
&= \EE(\log D_{i+1}).
}
Observe that when $i=n$, the first summand in \eqref{mess0} is equal to
\eeq{
&\sum_{u_{n} \in \N \times \Z^d} \sum_{u_{n-1} \sim u_{n}} f_{n-1}(u_{n-1})\exp(\beta \eta^{(n)}_{u_{n}})P(u_{n-1},u_{n})  \\
&= \frac{1}{D_{n-1}}\sum_{u_{n} \in \N \times \Z^d} \sum_{u_{n-2} \sim u_{n-1} \sim u_{n}} f_{n-2}(u_{n-2})\exp(\beta \eta^{(n-1)}_{u_{n-1}} + \beta \eta^{(n)}_{u_{n}})P(u_{n-2},u_{n-1})P(u_{n-1},u_{n})  \\
&\hspace{0.08in}\vdots \\
&= \frac{1}{D_1D_2\cdots D_{n-1}}\sum_{u_{n} \in \N \times \Z^d} \sum_{u_{0} \sim u_{1} \sim \cdots \sim u_{n}} f_{0}(u_{0})\exp\bigg(\beta \sum_{i = 1}^{n} \eta^{(i)}_{u_{i}}\bigg)\prod_{i=1}^{n}P(u_{i-1},u_{i})  \\
&= \frac{1}{D_1D_2\cdots D_{n-1}}\sum_{u_0 \in \N \times \Z^d} \sum_{u_{n} \sim u_{n-1} \sim \cdots \sim u_0} f_0(u_0)\exp\bigg(\beta \sum_{i = 1}^{n} \eta^{(i)}_{u_{i}}\bigg)\prod_{i=1}^{n}P(u_{i-1},u_{i})  \\
&= \frac{1}{D_1D_2\cdots D_{n-1}}\sum_{u_0 \in \N \times \Z^d} \frac{f_0(u_0)}{\|f_0\|}\, \|f_0\|\sum_{u_{n} \sim u_{n-1} \sim \cdots \sim u_0} \exp\bigg(\beta \sum_{i = 1}^{n} \eta^{(i)}_{u_{i}}\bigg)\prod_{i=1}^{n}P(u_{i-1},u_{i}) \label{mess1},
}
while the second summand in \eqref{mess0} is
\eeq{
(1 - \|f_{n-1}\|)e^{\lambda(\beta)}
&= \bigg(1 - \frac{1}{D_{n-1}}\sum_{u_{n-1} \in \N \times \Z^d}\sum_{u_{n-2} \sim u_{n-1}} f_{n-2} (u_{n-2})\exp(\beta \eta^{(n-1)}_{u_{n-1}})P(u_{n-2},u_{n-1})\bigg)e^{\lambda(\beta)}   \\
&= \frac{1}{D_{n-1}}\Big[D_{n-1} - \big(D_{n-1}-(1-\|f_{n-2}\|)e^{\lambda(\beta)}\big)\Big]e^{\lambda(\beta)}  \\
&= \frac{(1 - \|f_{n-2}\|)e^{2\lambda(\beta)}}{D_{n-1}}   \\
&= \frac{(1 - \|f_{n-3}\|)e^{3\lambda(\beta)}}{D_{n-2}D_{n-1}} 
=\cdots=\frac{(1 - \|f_{0}\|)e^{n\lambda(\beta)}}{D_1D_2 \cdots D_{n-1}}
= \frac{(1-\|f_0\|)\, \EE(Z_{n})}{D_1D_2\cdots D_{n-1}}. \label{mess2} \raisetag{4\baselineskip}
}
By summing the final expressions in \eqref{mess1} and \eqref{mess2} to obtain the right-hand side of \eqref{mess0}, and then clearing the fraction, we see
\eq{
&D_1D_2\cdots D_{n-1}D_{n} \\
&= \sum_{u_0 \in \N \times \Z^d} \frac{f_0(u_0)}{\|f_0\|}\Bigg[\|f_0\|\Bigg(\sum_{u_{n} \sim u_{n-1} \sim \cdots \sim u_0}\exp\bigg(\beta \sum_{i = 1}^n \eta^{(i)}_{u_i}\bigg)\prod_{i=1}^{n}P(u_{i-1},u_{i})\Bigg) + (1 - \|f_0\|)\EE(Z_{n})\Bigg].
}
Using the concavity of the $\log$ function, we further deduce
\eq{
&\log D_1D_2\cdots D_n \\
&\geq 
\sum_{u_0 \in \N \times \Z^d} \frac{f_0(u_0)}{\|f_0\|} \log \Bigg[\|f_0\|\Bigg(\sum_{u_{n} \sim u_{n-1} \sim \cdots \sim u_0}\exp\bigg(\beta \sum_{i = 1}^n \eta^{(i)}_{u_i}\bigg)\prod_{i=1}^{n}P(u_{i-1},u_{i})\Bigg) + (1 - \|f_0\|)\EE(Z_{n})\Bigg] \\
&\geq \sum_{u_0 \in \N \times \Z^d} \frac{f_0(u_0)}{\|f_0\|}
\Bigg[\|f_0\|\log \Bigg(\sum_{u_{n} \sim u_{n-1} \sim \cdots \sim u_0}\exp\bigg(\beta \sum_{i = 1}^n \eta^{(i)}_{u_i}\bigg)\prod_{i=1}^{n}P(u_{i-1},u_{i})\Bigg) + (1 - \|f_0\|)\log \EE(Z_{n})\Bigg] \\
&\geq \sum_{u_0 \in \N \times \Z^d} \frac{f_0(u_0)}{\|f_0\|}
\Bigg[\|f_0\|\log \Bigg(\sum_{u_{n} \sim u_{n-1} \sim \cdots \sim u_0}\exp\bigg(\beta \sum_{i = 1}^n \eta^{(i)}_{u_i}\bigg)\prod_{i=1}^{n}P(u_{i-1},u_{i})\Bigg) + (1 - \|f_0\|)\EE(\log Z_n)\Bigg],
}
where equality holds throughout if and only if $f_0(u_0) = 1$ for some $u_0 \in \Z^d$.
Since the random variable $\sum_{u_{n} \sim u_{n-1} \sim \cdots \sim u_0}\exp\big(\beta \sum_{i = 1}^n \eta^{(i)}_{u_i}\big)\prod_{i=1}^{n}P(u_{i-1},u_{i})$ is equal in law to $Z_n$ for any fixed $u_0 \in \N \times \Z^d$, taking expectation yields
\eq{
\EE(\log D_1D_2\cdots D_n) &\geq \sum_{u_0 \in \N \times \Z^d} \frac{f_0(u_0)}{\|f_0\|} \Big(\|f_0\|\, \EE(\log Z_n) + (1 - \|f_0\|)\, \EE(\log Z_n)\Big)
= \EE(\log Z_n).
}
It follows that
\eq{
\sum_{i = 0}^{n-1}  \r(\t^i \delta_{f_0})
\stackrel{\mbox{\scriptsize\eqref{mess0_2}}}{=} \sum_{i = 0}^{n-1}\EE(\log D_{i+1})
&= \EE(\log D_1D_2\cdots D_n)
\geq \EE(\log Z_n),
}
with equality if and only if $f_0 = \vc{1}$.
\end{proof}

\section{Proofs of main results} \label{proof_main_results}
In this brief final section we prove Theorems \ref{apa_new} and \ref{geometric_new}.
As before, $\vc 0$ is the element of $\s$ whose unique representative in $\s_0$ is the zero function.
Recall the sets $\k$ and $\m$ defined by \eqref{K_def} and \eqref{M_def}, respectively.

\begin{thm}[{cf.~\cite[Theorem 5.2]{bates-chatterjee17}}] \label{characterization}
Assume \eqref{mgf_assumption}.
Then the following statements hold:
\begin{itemize}
\item[(a)] If $0 \leq \beta \leq \beta_c$, then $\k = \m = \{\delta_{\vc{0}}\}$.
\item[(b)] If $\beta > \beta_c$, then $\nu(\{f \in \s : \|f\| = 1\}) = 1$ for every $\nu \in \m$, and so $\t$ has more than one fixed point.
\end{itemize}
\end{thm}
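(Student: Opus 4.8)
The plan is to follow the proof of \cite[Theorem 5.2]{bates-chatterjee17} essentially verbatim: all of the generality in $\alpha$ and in the reference walk $P$ has already been absorbed into the definitions of $\t$ and $R$, so no new ideas are required. I would first record two elementary facts. \emph{(i)} $\vc 0$ is always a fixed point of $T$, since substituting $f=\vc 0$ into \eqref{F_def} gives $F=\vc 0$ almost surely; hence $\delta_{\vc 0}\in\k$, and $R(\vc 0)=\EE\log e^{\lambda(\beta)}=\lambda(\beta)$ by \eqref{r_def}. More generally, Jensen's inequality applied to $\log$ in \eqref{r_def}, together with
\eq{
\EE\Big(\sum_{u\in\N\times\Z^d}\sum_{v\sim u}f(v)e^{\beta\eta_u}P(v,u)\Big)=\|f\|e^{\lambda(\beta)},
}
gives $R(f)\le\lambda(\beta)$ for every $f\in\s$, with equality precisely when the positive random variable $\sum_u\sum_{v\sim u}f(v)e^{\beta\eta_u}P(v,u)$ is $\vc\eta$-almost surely constant; as $\eta$ is non-degenerate and $\beta>0$, this forces $f=\vc 0$. \emph{(ii)} The numerator in \eqref{F_def} vanishes identically in $\vc\eta$ only when $f=\vc 0$, so $T$ never maps a nonzero element onto $\vc 0$; and if $\|f\|=1$ the denominator in \eqref{F_def} reduces to $\sum_u\sum_{v\sim u}f(v)e^{\beta\eta_u}P(v,u)$, so $\|F\|=1$ almost surely.

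For part (a), suppose $0\le\beta\le\beta_c$, so that $p(\beta)=\lambda(\beta)$ by \eqref{below_phase_transition}. Given $\nu\in\k$, the variational formula \eqref{variational_formula} yields $\r(\nu)\ge p(\beta)=\lambda(\beta)$, while fact \emph{(i)} gives $\r(\nu)=\int_\s R(f)\,\nu(\dd f)\le\lambda(\beta)$. Hence $\int_\s\big(\lambda(\beta)-R(f)\big)\,\nu(\dd f)=0$, and since the integrand is nonnegative and vanishes only at $\vc 0$, we get $\nu=\delta_{\vc 0}$. Thus $\k=\{\delta_{\vc 0}\}$, and therefore $\m=\k=\{\delta_{\vc 0}\}$.

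For part (b), now $\beta>\beta_c$, so $p(\beta)<\lambda(\beta)$ by \eqref{above_phase_transition}. The first step is to show that every $\nu\in\k$ gives zero mass to $\{0<\|f\|<1\}$. I would run the stationary chain $f_0\sim\nu$, $f_{n+1}\sim Tf_n$, and set $X:=\sum_u\sum_{v\sim u}f_0(v)e^{\beta\eta_u}P(v,u)$ and $c:=(1-\|f_0\|)e^{\lambda(\beta)}$, so that $\|f_1\|=X/(X+c)$ and $\EE(X\mid f_0)=\|f_0\|e^{\lambda(\beta)}$. Since $x\mapsto x/(x+c)$ is strictly concave on $[0,\infty)$ when $c>0$, Jensen's inequality gives $\EE(\|f_1\|\mid f_0)\le\|f_0\|$, with equality iff $c=0$ or $X$ is $\vc\eta$-a.s.\ constant, i.e.\ iff $\|f_0\|\in\{0,1\}$ (using, as in \emph{(i)}, that $X$ is non-degenerate when $f_0\neq\vc 0$). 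Stationarity forces $\EE_\nu\|f_1\|=\EE_\nu\|f_0\|$, so equality holds for $\nu$-a.e.\ $f_0$, proving the claim. Consequently $\nu=t\,\delta_{\vc 0}+(1-t)\nu'$ with $t:=\nu(\{\vc 0\})$ and, when $t<1$, $\nu'$ a probability measure supported on $\{\|f\|=1\}$. By fact \emph{(ii)} and linearity of $\t$, $\t\nu'$ is again supported on $\{\|f\|=1\}$ and places no mass at $\vc 0$; comparing the two sides of $\t\nu=\nu$ then forces $\t\nu'=\nu'$, so $\nu'\in\k$ and $\r(\nu')\ge p(\beta)$ by \eqref{variational_formula}. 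Now $\r(\nu)=t\lambda(\beta)+(1-t)\r(\nu')$; if in addition $\nu\in\m$, then $p(\beta)=\r(\nu)\ge t\lambda(\beta)+(1-t)p(\beta)$, so $t\big(\lambda(\beta)-p(\beta)\big)\le 0$ and hence $t=0$. Thus $\nu(\{\|f\|=1\})=1$ for every $\nu\in\m$. Finally, $\k$ is closed by continuity of $\t$, hence compact, and $\r$ is continuous by Lemma \ref{portmanteau}, so $\m\neq\varnothing$; any $\nu_0\in\m$ satisfies $\nu_0\neq\delta_{\vc 0}$ since $\delta_{\vc 0}(\{\|f\|=1\})=0$, so $\t$ has at least two fixed points.

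I expect the only genuinely substantive point to be the supermartingale estimate for $\|f_n\|$ in part (b): recognizing $\|f\|$ as the right potential and pinning down the equality case of Jensen (both for $x\mapsto x/(x+c)$ and for $\log$), so as to exclude fixed points that charge $\{0<\|f\|<1\}$ and to see that $R$ drops strictly below $\lambda(\beta)$ away from $\vc 0$. Given the properties of $R$ and $T$ already in hand from the proofs of Proposition \ref{continuous1} and Theorem \ref{free_energy_thm}, each of these should be short.
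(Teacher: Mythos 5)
Your proposal is correct and follows essentially the same route as the paper's proof: strict Jensen for $t\mapsto t/(t+(1-\|f\|)e^{\lambda(\beta)})$ to rule out stationary mass on $\{0<\|f\|<1\}$, the decomposition $\nu=t\delta_{\vc 0}+(1-t)\nu'$ with both pieces fixed by $\t$, strict Jensen for $\log$ to get $R(f)<\lambda(\beta)=R(\vc 0)$ for $f\neq\vc 0$, and the variational formula \eqref{variational_formula} to force $t=0$ (resp.\ $\nu=\delta_{\vc 0}$) below (resp.\ above) $\lambda(\beta)=p(\beta)$. Your explicit stationarity argument for excluding $\{0<\|f\|<1\}$ and the compactness/continuity remark giving $\m\neq\varnothing$ are just spelled-out versions of steps the paper leaves implicit.
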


\begin{remark}
Note that $\delta_{\vc 0}$ is always a fixed point of $\t$.
\end{remark}

\begin{proof}
Given $f \in \s$, consider the random variable $F \in \s$ having representative defined by \eqref{F_def}.
First observe that $\|F\| = 0$ if and only if $\|f\| = 0$, and similarly $\|F\| = 1$ if and only if $\|f\| = 1$.
On the other hand, if $f \in \s$ satisfies $0 < \|f\| < 1$, then Jensen's inequality applied to the concave function $t \mapsto t/(t+(1-\|f\|e^{\lambda(\beta)})$ implies
\eq{
\int \|F\|\ \t\delta_{f}(\dd F)
&= \EE\bigg[\frac{\sum_{u \in \N \times \Z^d} \sum_{v \sim u} f(v)e^{\beta \eta_u}P(v,u)}{\sum_{u \in \N \times \Z^d} \sum_{v\sim u} f(v)e^{\beta \eta_u}P(v,u) + (1-\|f\|)e^{\lambda(\beta)}}\bigg] \\
&< \frac{\EE\big[\sum_{u \in \N \times \Z^d} \sum_{v \sim u} f(v)e^{\beta \eta_u}P(v,u)\big]}{\EE\big[\sum_{u \in \N \times \Z^d} \sum_{v\sim u} f(v)e^{\beta \eta_u}P(v,u)\big] + (1-\|f\|)e^{\lambda(\beta)}} \\
&= \frac{\|f\|e^{\lambda(\beta)}}{\|f\|e^{\lambda(\beta)}+(1-\|f\|)e^{\lambda(\beta)}} = \|f\|.
}
The inequality above is strict because the relevant concave function is not linear, and its argument is not an almost sure constant.
It follows that if $\nu \in \k$, then $\nu(\{f \in \s : 0 < \|f\| < 1\}) = 0$.
Furthermore, since $\|F\| = \|f\|$ when $\|f\| \in \{0,1\}$, we may conclude that any $\nu \in \k$ decomposes as a convex combination
\eq{
\nu = t\delta_{\vc 0} + (1-t)\nu_1, \quad t \in [0,1],
}
where $\nu_1(\{f \in \s : \|f\| = 1\}) = 1$, and both $\delta_{\vc 0}$ and $\nu_1$---the latter of which must exist if $\nu \neq \delta_{\vc 0}$---are elements of $\k$.

Next notice that $R$ has a unique maximum at $\vc 0$.
Indeed, for any $f \neq \vc 0$ in $\s$, Jensen's inequality shows
\eq{
R(f) &=\EE \log \bigg(\sum_{u \in \N \times \Z^d} \sum_{v \sim u} f(v)e^{\beta \eta_u}P(v,u) + (1-\|f\|)e^{\lambda(\beta)}\bigg) \\
 &< \log \EE\bigg[\sum_{u \in \N \times \Z^d} \sum_{v \sim u} f(v)e^{\beta \eta_u}P(v,u) + (1-\|f\|)e^{\lambda(\beta)}\bigg] \\
 &=\log\bigg( \sum_{u \in \N \times\Z^d}\sum_{v\sim u} f(v)e^{\lambda(\beta)}P(v,u) + (1-\|f\|)e^{\lambda(\beta)}\bigg) = \lambda(\beta) = R(\vc 0).
}
In particular, when $0 < t < 1$ we have
\eq{
\r(\nu) = t\r(\delta_{\vc 0}) + (1-t)\r(\nu_1) = tR(\vc 0) + (1-t)\r(\nu_1) \in (\r(\nu_1),\lambda(\beta)).
}
Hence
\eq{
\beta > \beta_c \quad \stackrel{\mbox{\scriptsize\eqref{above_phase_transition}}}{\Rightarrow} \quad
\lambda(\beta) > p(\beta)
\stackrel{\mbox{\scriptsize\eqref{variational_formula}}}{=} \inf_{\nu \in \k} \r(\nu)
\quad \Rightarrow \quad
\nu \in \m \text{ only if } \nu = \nu_1.
}
That is, claim (b) holds.
On the other hand,
\eq{
0 \leq \beta \leq \beta_c \quad \stackrel{\mbox{\scriptsize\eqref{below_phase_transition}}}{\Rightarrow} \quad
\lambda(\beta)  = p(\beta)
\stackrel{\mbox{\scriptsize\eqref{variational_formula}}}{=} \inf_{\nu \in \k} \r(\nu)
\quad \Rightarrow \quad
\nu = \delta_{\vc 0} \text{ for all $\nu \in \k$},
}
and so claim (a) holds as well.
\end{proof}

\subsection{Asymptotic pure atomicity}
Recall that a sequence of random probability measures\linebreak $(\rho_i(\omega_i \in \cdot))_{i \geq 0}$ on $\Z^d$ is \textit{asymptotically purely atomic} if for every sequence $\eps_i \to 0$,
\eq{
\lim_{n\to\infty} \frac{1}{n} \sum_{i = 0}^{n-1} \rho_i(\omega_i \in \a_i^{\eps_i}) = 1 \quad \mathrm{a.s.},
}
where $\a_i^\eps \coloneqq \{x \in \Z^d : \rho_i(\omega_i = x) > \eps\}$ is the set of ``$\eps$-atoms" with respect to $\rho_i(\omega_i \in \cdot)$.
Let us recall the statement of Theorem \ref{apa_new}.

\begin{thm}[{cf.~\cite[Theorem 6.3]{bates-chatterjee17}}] \label{total_mass}
Assume \eqref{mgf_assumption}.
Then the following statements hold:
\begin{itemize}
\item[(a)] If $\beta > \beta_c$, then $(\rho_i(\omega_i \in \cdot))_{i \geq 0}$ is asymptotically purely atomic.
\item[(b)] If $0 \leq \beta \leq \beta_c$, then there is a sequence $(\eps_i)_{i \geq 0}$ tending to 0 as $i \to \infty$, such that
\eq{
\lim_{n \to \infty} \frac{1}{n} \sum_{i = 0}^{n-1} \rho_{i}(\omega_i \in \a_i^{\eps_i}) = 0 \quad \mathrm{a.s.}
}
\end{itemize}
\end{thm}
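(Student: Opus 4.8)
The plan is to read off both statements from the convergence $\w_\alpha(\mu_n,\m)\to0$ of Theorem~\ref{free_energy_thm} together with the description of $\m$ in Theorem~\ref{characterization}, by testing the empirical measures $\mu_n$ against a suitable family of continuous functionals on $\s$. For $\eps>0$ and $f\in\s_0$ write $m_\eps(f):=\sum_{u:\,f(u)>\eps}f(u)$, so that $m_\eps(f_i)=\rho_i(\omega_i\in\a_i^{\eps})$ when $f_i$ is the length-$i$ endpoint distribution, and $G_\eps(f):=\sum_{u}(f(u)-\eps)_+$. The pointwise inequalities $G_\eps(f)\le m_\eps(f)\le 2G_{\eps/2}(f)$ are elementary. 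First I would check that $G_\eps$ descends to a continuous function on $(\s,d_\alpha)$: it depends only on the multiset of values of a representative, hence is constant on $d_\alpha$-equivalence classes by Lemma~\ref{equivalence_classes} (cf.~\cite[Corollary~2.7]{bates-chatterjee17}), and if $d_\alpha(f,g)<\delta$ with $\delta^{1/\alpha}\le\eps$ then off the matched set both $f$ and $g$ lie below $\eps$ and contribute nothing, while on the matched set $G_\eps$ is $1$-Lipschitz for $\ell^1$; this is the same $\ell^\alpha$-bookkeeping already used in the proof of Proposition~\ref{continuous1}. Consequently $N_\eps:=1-G_\eps$ is continuous on $\s$, takes values in $[0,1]$, satisfies $1-m_\eps(f)\le N_\eps(f)$, is non-decreasing in $\eps$, and decreases pointwise to $1-\|\cdot\|$ as $\eps\downarrow0$ by monotone convergence.

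\textbf{Part (a).} Here $\beta>\beta_c$, so $\w_\alpha(\mu_n,\m)\to0$ a.s.\ and every $\nu\in\m$ is carried by $\{f\in\s:\|f\|=1\}$ by Theorem~\ref{characterization}(b), whence $\int(1-\|f\|)\,\nu(\dd f)=0$ for all $\nu\in\m$. Lifting $N_\eps$ via Lemma~\ref{portmanteau}, the maps $\nu\mapsto\int N_\eps\,\dd\nu$ are continuous on $\p(\s)$ and, as $\eps\downarrow0$, decrease pointwise to $\nu\mapsto\int(1-\|f\|)\,\dd\nu$, whose restriction to the compact set $\m$ is identically zero. Dini's theorem therefore upgrades this to uniform convergence on $\m$: given $\eta>0$ there is $\delta_0>0$ with $\int N_{\delta_0}\,\dd\nu<\eta$ for every $\nu\in\m$. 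Since $\w_\alpha(\mu_n,\m)\to0$ and $\nu\mapsto\int N_{\delta_0}\,\dd\nu$ is continuous on the compact space $\p(\s)$, every subsequential limit of $\mu_{n-1}$ lies in $\m$, and hence $\limsup_n\frac1n\sum_{i=0}^{n-1}N_{\delta_0}(f_i)=\limsup_n\int N_{\delta_0}\,\dd\mu_{n-1}\le\eta$ a.s. For any sequence $\eps_i\to0$ we have $N_{\eps_i}(f_i)\le N_{\delta_0}(f_i)$ for all large $i$ (each term being $\le1$), and $1-\rho_i(\omega_i\in\a_i^{\eps_i})=1-m_{\eps_i}(f_i)\le N_{\eps_i}(f_i)$, so $\limsup_n\frac1n\sum_{i=0}^{n-1}\bigl(1-\rho_i(\omega_i\in\a_i^{\eps_i})\bigr)\le\eta$ a.s.; as $\eta>0$ is arbitrary and the averages are nonnegative, $\lim_n\frac1n\sum_{i=0}^{n-1}\rho_i(\omega_i\in\a_i^{\eps_i})=1$ a.s.

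\textbf{Part (b).} Now $\beta\le\beta_c$, so $\k=\m=\{\delta_{\vc0}\}$ by Theorem~\ref{characterization}(a) and $\w_\alpha(\mu_n,\delta_{\vc0})\to0$ a.s. For each fixed $\eps>0$, continuity of $G_{\eps/2}$ gives $\frac1n\sum_{i=0}^{n-1}\rho_i(\omega_i\in\a_i^{\eps})=\frac1n\sum_{i=0}^{n-1}m_\eps(f_i)\le2\int G_{\eps/2}\,\dd\mu_{n-1}\to 2G_{\eps/2}(\vc0)=0$ a.s. To produce a single (deterministic) sequence $\eps_i\downarrow0$, note that for each $k$ the a.s.\ convergence just obtained and continuity of measure give $\PP\bigl(\sup_{n\ge N}\tfrac1n\sum_{i=0}^{n-1}\rho_i(\omega_i\in\a_i^{1/k})\ge 1/k\bigr)\to0$ as $N\to\infty$; choose $N_1<N_2<\cdots$ so that this probability is below $2^{-k}$, so that by Borel--Cantelli, almost surely for all large $k$ one has $\frac1n\sum_{i=0}^{n-1}\rho_i(\omega_i\in\a_i^{1/k})<1/k$ for every $n\ge N_k$. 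Set $\eps_i:=1/k$ for $N_k\le i<N_{k+1}$ and $\eps_i:=1$ for $i<N_1$; then $\eps_i\to0$, and since $\eps_i\ge 1/K$ whenever $i<N_{K+1}$ we have $\a_i^{\eps_i}\subseteq\a_i^{1/K}$, hence $\rho_i(\omega_i\in\a_i^{\eps_i})\le\rho_i(\omega_i\in\a_i^{1/K})$, for all $i<n$ when $N_K\le n<N_{K+1}$; therefore $\frac1n\sum_{i=0}^{n-1}\rho_i(\omega_i\in\a_i^{\eps_i})\le\frac1n\sum_{i=0}^{n-1}\rho_i(\omega_i\in\a_i^{1/K})<1/K$ for all large $K$, and since $K\to\infty$ as $n\to\infty$ this forces the limit to be $0$ a.s.

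\textbf{Main obstacle.} The only genuinely non-routine point is the uniformity required in part (a): the decay $\int N_\eps\,\dd\nu\downarrow0$ holds separately for each $\nu\in\m$ but must be made uniform over $\m$ before it can be combined with $\w_\alpha(\mu_n,\m)\to0$. This is exactly where compactness of $\m$ and the fact (Theorem~\ref{characterization}(b)) that the limiting functional $\nu\mapsto\int(1-\|f\|)\,\dd\nu$ vanishes \emph{identically} on $\m$ enter, through Dini's theorem. The construction of the continuous functionals $G_\eps,N_\eps$ on the compactification and the Borel--Cantelli derandomization in part (b) are comparatively routine.
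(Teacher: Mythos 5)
Your proof is correct, and it follows the same overall strategy as the argument the paper relies on (the paper's ``proof'' is simply the observation, via Proposition \ref{metrics_equivalent}, that the proof in \cite[Section 6]{bates-chatterjee17} carries over verbatim): both rest on the convergence $\w_\alpha(\mu_n,\m)\to 0$ from Theorem \ref{free_energy_thm} and the characterization of $\m$ in Theorem \ref{characterization}, tested against functionals on $\s$ that record the mass carried by atoms of size at least $\eps$. The implementation differs mildly: \cite{bates-chatterjee17} works with the only semicontinuous functionals $f\mapsto\sum_u f(u)\one_{\{f(u)>\eps\}}$ and extracts the needed uniformity over the compact set $\m$ from a separate compactness lemma, whereas you regularize to the genuinely continuous truncations $G_\eps(f)=\sum_u(f(u)-\eps)_+$ (your continuity argument is sound: the $\ell^\alpha$ tail terms of $d_\alpha$ force all unmatched values below $\delta^{1/\alpha}\le\eps$, and on the matched set $(x-\eps)_+$ is $1$-Lipschitz) and then obtain uniformity on $\m$ from Dini's theorem, the two-sided bound $G_\eps\le m_\eps\le 2G_{\eps/2}$ letting one family of functionals serve both parts; your explicit Borel--Cantelli diagonalization in (b) plays the role of the analogous subsequence extraction there. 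This yields a self-contained proof with slightly cleaner semicontinuity bookkeeping and no loss of generality, and the obstacle you single out---making $\int N_\eps\,\dd\nu\downarrow 0$ uniform over $\m$---is indeed exactly where compactness of $\m$ and Theorem \ref{characterization}(b) must enter in either treatment.
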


Since $(\s,d_\alpha)$ has an equivalent topology to $(\s,d_2)$ by Proposition \ref{metrics_equivalent}, continuous functionals on the latter space are also continuous on the former.
Consequently, the proof in \cite[Section 6]{bates-chatterjee17} given for the SRW case requires no modification.

\subsection{Geometric localization with positive density}
As usual, let $f_i(\cdot) = \rho_i(\omega_i = \cdot)$ denote the probability mass function for the $i$-th endpoint distribution.
We say that the sequence $(f_i)_{i \geq 0}$ exhibits \textit{geometric localization with positive density} if for every $\delta > 0$, there is $K < \infty$ and $\theta  > 0$ such that
\eq{
\liminf_{n \to \infty} \frac{1}{n} \sum_{i = 0}^{n-1} \one_{\{f_i \in \g_{\delta,K}\}} \geq \theta \quad \mathrm{a.s.},
}
where
\eq{
\g_{\delta,K} = \Big\{f : \Z^d \to [0,1] : \sum_{x \in \Z^d} f(x) = 1,\, \sum_{x \in D} f(x) > 1 - \delta \text{ for some $D\subset \Z^d$ with $\diam(D) \leq K$}\Big\},
}
and $\diam(D) \coloneqq \sup\{\|x-y\|_1 : x,y \in D\}$.
Here we restate Theorem \ref{geometric_new}.

\begin{thm}[{cf.~\cite[Theorem 7.3]{bates-chatterjee17}}] \label{localized_subsequence}
Assume \eqref{mgf_assumption}.
Then the following statements hold:
\begin{itemize}
\item[(a)] If $\beta > \beta_c$, then $(f_i)_{i \geq 0}$ is geometrically localized with positive density. 
\item[(b)] If $0 \leq \beta \leq \beta_c$, then for any $\delta \in (0,1)$ and any $K$,
\eq{
\lim_{n \to \infty} \frac{1}{n} \sum_{i = 0}^{n-1} \one_{\{f_i \in \g_{\delta,K}\}} = 0 \quad \mathrm{a.s.} 
}
\end{itemize}
\end{thm}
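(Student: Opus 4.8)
The plan is to obtain both parts of Theorem~\ref{localized_subsequence} from the almost sure convergence $\w_\alpha(\mu_n,\m)\to 0$ in \eqref{convergence_to_M}, together with the description of $\m$ furnished by Theorem~\ref{characterization}, essentially by transplanting the proof of \cite[Theorem~7.3]{bates-chatterjee17}. The first observation is that nothing topological stands in the way: Proposition~\ref{metrics_equivalent} says $(\s,d_\alpha)$ carries the same topology as $(\s,d_2)$, so every functional on $\s$ proved continuous in \cite{bates-chatterjee17} remains continuous here --- in particular the ``ball-mass'' functional
\eq{
Q_K(f):=\sup_{n\in\N,\ y\in\Z^d}\ \sum_{x\,:\,\|x-y\|_1\le K} f(n,x),\qquad f\in\s,
}
and, by Lemma~\ref{portmanteau}, its integrated lift $\mu\mapsto\int_\s Q_K\,\dd\mu$ on $\p(\s)$. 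I would then record the elementary facts that $Q_K(f_i)$ is exactly the largest mass carried by an $\ell^1$-ball of radius $K$ under $\rho_i$, that $f\in\g_{\delta,K}$ implies $Q_K(f)>1-\delta$, and that $Q_K(f)>1-\delta$ implies $f\in\g_{\delta,2K}$; coupled with the empirical-measure identity $\mu_n(\,\cdot\,)=\tfrac1{n+1}\sum_{i=0}^{n}\one_{\{f_i\in\,\cdot\,\}}$, these convert statements about $\nu\in\m$ into the Ces\`aro statements in the theorem.

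For part~(b), Theorem~\ref{characterization}(a) gives $\m=\{\delta_{\vc 0}\}$ when $0\le\beta\le\beta_c$, and $Q_K(\vc 0)=0<1-\delta$ since $\delta<1$. The set $\{f\in\s:Q_K(f)\ge 1-\delta\}$ is closed, so $\mu\mapsto\mu(\{Q_K\ge 1-\delta\})$ is upper semicontinuous; since $\w_\alpha(\mu_n,\delta_{\vc 0})\to 0$ and $\delta_{\vc 0}(\{Q_K\ge 1-\delta\})=0$, we get $\limsup_n\mu_n(\{Q_K\ge 1-\delta\})=0$ almost surely, and as $\one_{\{f_i\in\g_{\delta,K}\}}\le\one_{\{Q_K(f_i)\ge 1-\delta\}}$ this is precisely the assertion of part~(b).

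For part~(a), with $\beta>\beta_c$, Theorem~\ref{characterization}(b) tells us $\nu(\{f:\|f\|=1\})=1$ for each $\nu\in\m$, and the crucial input --- which is the content of \cite[Theorem~7.3]{bates-chatterjee17} --- is that for every $\delta>0$ there exist $K<\infty$ and $\theta'>0$ with $\nu(\{f:Q_K(f)>1-\delta\})\ge\theta'$ for \emph{all} $\nu\in\m$. Granting this, $\{f:Q_K(f)>1-\delta\}$ is open, so $\mu\mapsto\mu(\{Q_K>1-\delta\})$ is lower semicontinuous; picking $\nu_n\in\m$ with $\w_\alpha(\mu_n,\nu_n)\to 0$ and passing to subsequential limits along the compact set $\m$, I obtain $\liminf_n\mu_n(\{Q_K>1-\delta\})\ge\inf_{\nu\in\m}\nu(\{Q_K>1-\delta\})\ge\theta'$ almost surely, and since $\one_{\{Q_K(f_i)>1-\delta\}}\le\one_{\{f_i\in\g_{\delta,2K}\}}$, taking $2K$ in place of $K$ and $\theta=\theta'$ yields geometric localization with positive density.

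The only step that requires genuine attention is this ``crucial input''. In \cite[Section~7]{bates-chatterjee17} it is deduced from two facts: that the strict gap $\lambda(\beta)-\r(\nu)=\lambda(\beta)-p(\beta)>0$, valid for every $\nu\in\m$, together with the fact (established here inside the proof of Theorem~\ref{characterization}) that $R$ has its unique maximum $\lambda(\beta)$ at $\vc 0$, forces $\nu$ to put a definite amount of mass on configurations carrying a macroscopic atom; and that a bootstrap using invariance $\t\nu=\nu$ upgrades ``macroscopic atom'' to ``mass exceeding $1-\delta$ on a bounded $\ell^1$-ball'', because one application of the update map cannot disperse a concentrated atom beyond bounded range. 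The bootstrap is where \cite{bates-chatterjee17} used nearest-neighbor steps, so it is the point to re-examine for a general reference walk; I expect it to go through with only the adjustments already present in Section~\ref{continuity} --- replacing the finite-range bound by the tightness estimate $P(\|\omega_1\|_1>K)\to 0$ as $K\to\infty$ and the bound $q:=\max_x P(0,x)<1$ from \eqref{walk_assumption}. The hard part is thus not new to this theorem: it was absorbed into Proposition~\ref{continuous1}, and modulo that, \cite[Section~7]{bates-chatterjee17} transfers with no further essential change.
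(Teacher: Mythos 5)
Your proposal follows essentially the same route as the paper: the paper's entire proof of Theorem \ref{localized_subsequence} is the observation that, once the general-$P$ versions of the key ingredients are in place --- the compact space $(\s,d_\alpha)$ with topology equivalent to that of $d_2$ (Proposition \ref{metrics_equivalent}), continuity of the update map and of $R$, the variational formula, Theorem \ref{characterization}, and the convergence \eqref{convergence_to_M} of $\mu_n$ to $\m$ --- the argument of \cite[Section 7]{bates-chatterjee17} applies with no modification; your outline (semicontinuity of the concentration sets via your functional $Q_K$, portmanteau, compactness of $\m$, and the translation between ball mass in $\s$ and membership in $\g_{\delta,K}$ or $\g_{\delta,2K}$ for the single-component endpoint laws) is exactly that transfer, and parts (a) and (b) are handled as in the paper.

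The one point to correct is your closing paragraph. The ``crucial input'' $\nu(\{Q_K>1-\delta\})\ge\theta'$ uniformly over $\nu\in\m$ is not obtained in \cite{bates-chatterjee17} by a bounded-range bootstrap of the form ``one application of the update map cannot disperse a concentrated atom beyond bounded range,'' and it certainly is not ``absorbed into Proposition \ref{continuous1}'': that proposition gives Wasserstein continuity of $f\mapsto Tf$ and says nothing about non-dispersal of mass or about the structure of $\t$-fixed points. If the Section 7 argument of \cite{bates-chatterjee17} had genuinely leaned on nearest-neighbor steps, your proposal would be left with an unproved adaptation at precisely the decisive step, since no tightness-based substitute is supplied. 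In fact no such reliance occurs: the Section 7 argument is carried out at the level of the abstract objects ($\s$, $\t$, $R$, $\k$, $\m$) whose general-$P$ versions this paper establishes --- note, for instance, that the update map \eqref{F_def} never transfers mass between distinct $\N$-components for \emph{any} reference walk, because $P(v,w)=0$ when $v\nsim w$ --- which is exactly why the paper can assert that the proof ``requires no modification.'' With that misattribution removed, your proof coincides with the paper's.
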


As for Theorem \ref{total_mass}, the proof is equivalent to the one in the SRW case, which the reader can find in \cite[Section 7]{bates-chatterjee17}.

\section{Acknowledgments}
I am very grateful to the referees for their comments, suggestions, and corrections.
This work also benefited from fruitful conservations with Amir Dembo, Persi Diaconis, and Ofer Zeitouni, as well as feedback from Sourav Chatterjee, with whom collaboration inspired this work.
I thank Subhabrata Sen and Lutfu Simsek for directing my attention to some of the referenced articles.


\bibliography{directed_polymers.bib}

\end{document}